\theoremstyle{plain}
\newtheorem{maintheorem}{Theorem}
\newtheorem{theorem}{Theorem}[section]
\newtheorem{lemma}[theorem]{Lemma}
\newtheorem{proposition}[theorem]{Proposition}
\newtheorem{corollary}[theorem]{Corollary}
\newtheorem{definition}[theorem]{Definition}
\newtheorem{remark}[theorem]{Remark}
\theoremstyle{definition}
\newtheorem{example}[theorem]{Example}
\newtheorem{claim}[theorem]{Claim}
\DeclareMathOperator{\diam}{diam}
\newcommand{\eqdef}{\stackrel{\scriptscriptstyle\rm def}{=}}
\let\oldtocsection=\tocsection
\let\oldtocsubsection=\tocsubsection
\renewcommand{\tocsection}[2]{\hspace{0em}\bf\oldtocsection{#1}{#2}}
\renewcommand{\tocsubsection}[2]{\hspace{1em}\oldtocsubsection{#1}{#2}}
\let\oldtocsubsubsection=\tocsubsubsection
\renewcommand{\tocsubsubsection}[2]{\hspace{2em}\oldtocsubsubsection{#1}{#2}}
\begin{document}

\title{Ergodicity of non-autonomous discrete systems with non-uniform expansion}

\author[Barrientos]{Pablo G. Barrientos}
\address{\footnotesize \centerline{Instituto de Matem\'atica e Estat\'istica,
Universidade Federal Fluminense, Niter\'oi Brasil}}
%\centerline{%Rua M\'ario Santos Braga s/n - Campus Valonguinhos, Niter\'oi,  Brazil}}
\email{barrientos@id.uff.br}

\author[Fakhari]{Abbas Fakhari}
\address{\footnotesize \centerline{Department of Mathematics, Shahid
Beheshti University, Tehran, Iran}}
%\centerline{G.C.Tehran 19839, Tehran, Iran}}
\email{a\_fakhari@sbu.ac.ir}

%\date{Received: date / Accepted: date}
%\keywords{uniform expanding and expandable non-autonomous systems,
%Lebesgue ergodicity, non-uniform expanding and expandable
%semigroup actions} \subjclass{37C05, 37C40, 37C60, 37C85}

\begin{abstract}
We study the ergodicity of non-autonomous discrete dynamical
systems with non-uniform expansion. As an application we get that
any uniformly expanding finitely generated semigroup action of
$C^{1+\alpha}$ local diffeomorphisms of a compact manifold is
ergodic with respect to the Lebesgue measure. Moreover, we will
also prove that every exact non-uniform expandable finitely
generated semigroup action of conformal $C^{1+\alpha}$ local
diffeomorphisms of a compact manifold is Lebesgue ergodic.
\end{abstract}

\maketitle
%\tableofcontents
\thispagestyle{empty}
\section{Ergodicity of finitely generated semigroup actions
with non-uniform expansion}

A local $C^r$-diffeomorphism $f: M  \to M$ of a boundaryless
compact differentiable manifold $M$ is said to be \emph{uniformly
expanding} if in some smooth metric $f$ stretches every tangent
vector. To be precise, if for some choice of a Riemannian metric
$\| \cdot \|$, there is $0<\sigma<1$ such that
$$
\|Df(x)^{-1}\|<\sigma \quad  \text{for all $x\in M$.}
$$
In \cite{SS85}, Sullivan and Shub proved that every $C^{1+\alpha}$
uniformly expanding circle local diffeomorphism is ergodic with
respect to Lebesgue measure. On the other hand, the regularity of
this result cannot be improved. Indeed, Quas constructed in
\cite{Q96} a $C^1$ uniformly expanding map of the circle which
preserves Lebesgue measure, but for which Lebesgue measure is
non-ergodic. Although rather folklore is the extension to greater
dimension of the Sullivan procedure, a rigorous proof that every
$C^{1+\alpha}$ uniformly expanding local diffeomorphisms of $M$ is
Lebesgue-ergodic can be easily deduced from~\cite[Theorem~1.1(c),
pg.~167]{M87}.

We will extend the usual definition of a uniformly expanding map
to a semigroup $\Gamma$ finitely generated by local
diffeomorphisms $f_1,\dots,f_d$. Consider
$\Omega=\{1,\dots,d\}^\mathbb{N}$. For a given sequence
$\omega=\omega_1\omega_2\dots\in \Omega$ we define the orbital
branch corresponding to $\omega$ by
$$
     %f_\omega\eqdef f_{\omega_1}
     %\ \ \text{and} \ \
     f^n_\omega =
   f^{}_{\omega_n} \circ \dots \circ f^{}_{\omega_2}\circ f^{}_{\omega_1}  \quad \text{for all $n\geq 1$}.
$$
We say that the \emph{action} of $\Gamma$ on $M$ is \emph{uniformly expanding (along an  orbital branch)}
if there exist $\omega\in \Omega$, $\lambda>1$ and $C>0$ such that for
every $x\in M$,
\begin{equation}
\label{eq:uniform-expanding}
\|Df_\omega^n(x)v\|\geq C \lambda^n \|v\| \quad \text{for
all $v\in T_xM$ and $n\geq 1$.}
\end{equation}
Finitely generated semigroup actions by uniformly expanding maps
have been previously considered in~\cite{RV16}. Observe
that~\eqref{eq:uniform-expanding} is more general an include
semigroup non-necessarily generated by expanding maps.  In order
to extend the above result about the ergodicity of the Lebesgue
measure for random uniformly expanding semigroup actions we need
first some definitions.

A set $A \subset M$ is \emph{$\Gamma$-invariant set} if $f(A)
\subset A$ for all $f\in\Gamma$. We say that the semigroup action
of $\Gamma$ on $M$ is \emph{ergodic with respect to Lebesgue
measure}
%(or Lebesgue-ergodic for short)
if $m(A)\in \{0,1\}$ for all $\Gamma$-invariant set $A$ of $M$ where $m$ denotes the
normalized Lebesgue measure of $M$.

\begin{maintheorem}
\label{thmA} Every uniformly expanding finitely generated
semigroup action of $C^{1+\alpha}$ local diffeomorphisms of a
compact manifold is ergodic with respect to Lebesgue measure.
\end{maintheorem}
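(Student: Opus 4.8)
The plan is to deduce Theorem~\ref{thmA} from the one-map statement recalled above --- the $C^{1+\alpha}$ form of the Sullivan--Shub theorem extracted from \cite[Theorem~1.1(c)]{M87} --- by manufacturing, out of the expanding orbital branch, a single honestly uniformly expanding $C^{1+\alpha}$ map that lies in the semigroup $\Gamma$.

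First I would fix $\omega\in\Omega$, $\lambda>1$ and $C>0$ as in~\eqref{eq:uniform-expanding}, pick $n_0\in\mathbb{N}$ with $C\lambda^{n_0}>1$, and set $\sigma\eqdef(C\lambda^{n_0})^{-1}\in(0,1)$ together with $g\eqdef f^{n_0}_\omega=f_{\omega_{n_0}}\circ\cdots\circ f_{\omega_1}$. Being a composition of finitely many generators, $g$ belongs to $\Gamma$, and being a composition of finitely many $C^{1+\alpha}$ local diffeomorphisms of the compact manifold $M$ it is itself a $C^{1+\alpha}$ local diffeomorphism of $M$. Evaluating~\eqref{eq:uniform-expanding} at $n=n_0$ gives $\|Dg(x)v\|\ge C\lambda^{n_0}\|v\|$ for all $x\in M$ and $v\in T_xM$, that is, $\|Dg(x)^{-1}\|\le\sigma<1$ for every $x\in M$; hence $g$ is uniformly expanding in the classical single-map sense. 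By the result recalled in the introduction, $g$ is therefore ergodic with respect to the Lebesgue measure $m$ and, by the same classical theory of expanding maps, carries a (unique) invariant probability measure $\mu$ equivalent to $m$.

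Next I would transfer the invariance. Let $A\subset M$ be a $\Gamma$-invariant Borel set. Since $g\in\Gamma$ we have $g(A)\subset A$; applying $g^{-1}$ gives $g^{-1}(g(A))\subset g^{-1}(A)$, and as $A\subset g^{-1}(g(A))$ always holds we get $A\subset g^{-1}(A)$. Because $\mu$ is $g$-invariant, $\mu(g^{-1}(A))=\mu(A)$, so $\mu\big(g^{-1}(A)\setminus A\big)=0$; combined with $A\subset g^{-1}(A)$ this shows $\mathbf{1}_A=\mathbf{1}_A\circ g$ $\mu$-almost everywhere, i.e.\ $A$ is invariant in the measure-theoretic sense. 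Ergodicity of $(g,\mu)$ then forces $\mu(A)\in\{0,1\}$, and since $m$ and $\mu$ are mutually absolutely continuous probability measures this yields $m(A)\in\{0,1\}$, which is the assertion of Theorem~\ref{thmA}.

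The only step needing real care is the passage from the \emph{forward} invariance $g(A)\subset A$ that the semigroup action provides to the \emph{fully} invariant set that the classical ergodicity statement consumes; this is precisely why I invoke the absolutely continuous invariant measure $\mu$ and not merely ergodicity with respect to $m$. The remaining ingredients --- the uniform H\"older estimate for the composition defining $g$, measurability of $g^{-1}(A)$, and (although it is not actually used here) the fact that an expanding map of a compact connected manifold is a covering --- are routine, and the orbital-branch hypothesis is used only through the single inequality at time $n_0$ that produces $g$.
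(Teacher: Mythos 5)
Your argument is correct, and it takes a genuinely different, more elementary route than the paper. You collapse the whole orbital branch into the single uniformly expanding map $g=f^{n_0}_\omega\in\Gamma$ at a time $n_0$ with $C\lambda^{n_0}>1$, invoke the classical one-map theory ($g$ admits a unique a.c.i.p.\ $\mu\sim m$ which is $g$-ergodic), and then carefully upgrade the forward-invariance $g(A)\subset A$ to full $\mu$-mod-zero invariance $g^{-1}(A)=A$ via $A\subset g^{-1}(A)$ and $\mu(g^{-1}(A))=\mu(A)$; this last passage is exactly the point one must not skip, and you handle it correctly. The paper instead proves Theorem~\ref{thmA} as a corollary of Theorem~\ref{thmB}: it checks that the uniform-expansion hypothesis~\eqref{eq:uniform-expanding} implies the non-uniform one~\eqref{eq0}, and proves a separate lemma (using rectifiable curves on the connected manifold and the exponential lower bound on $\|Df^n_\omega\|$) that the action is exact; Theorem~\ref{thmB} in turn rests on the full non-autonomous machinery of hyperbolic times, hyperbolic preballs with bounded distortion, and the local-ergodicity Proposition~\ref{prop-ergodic}, followed by exactness to promote local to global ergodicity. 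Your approach buys a short, self-contained proof of Theorem~\ref{thmA} by outsourcing everything to the Sullivan--Shub/Ma\~n\'e theorem, but it does not generalize: it crucially relies on finding a single uniformly expanding element of $\Gamma$, which is exactly what fails in the non-uniform setting of Theorems~\ref{thmB} and~\ref{thmC}. The paper's longer route proves the stronger statements first and gets Theorem~\ref{thmA} essentially for free. One small point worth making explicit in both proofs: ergodicity of a single expanding map (and hence your appeal to Ma\~n\'e) implicitly requires $M$ to be connected --- otherwise a generator preserving each component gives a counterexample --- and the paper's exactness lemma uses connectedness through its curve argument; so the hypothesis is shared, not an extra assumption on your part.
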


The  $C^{1+\alpha}$-regularity assumption behind the ergodicity
theorems essentially related to the {\it bounded distortion
property} which guarantees the preservation of density by the
dynamics. There are many examples that show that $C^1$-regularity
condition alone is not enough (see for instance \cite{B1,Q96}).
For uniformly expanding actions of $C^2$ endomorphisms the Theorem
\ref{thmA} can be deduced from~\cite[Theorem 2.2]{K08}. We will
get this theorem (for $C^{1+\alpha}$ local diffeomorphisms), as a
consequence of the following result which requires to introduce a
generalization of uniformly expanding actions.

We say that the action of $\Gamma$ is \emph{non-uniformly expanding (along an
orbital branch)} if there  is $\omega=\omega_1\omega_2\dots \in
\Omega$ such that for $m$-almost every $x\in M$,
\begin{equation}
\label{eq0}
  \limsup_{n\to\infty} \frac{1}{n} \sum_{i=0}^{n-1}
  \log \|Df^{}_{\omega_{i+1}}(f^i_\omega(x))^{-1}\|<0.
\end{equation}
%One can construct a easily semigroup actions which are
%random non-uniformly expanding but not uniformly random expanding.
%To do this, it suffices to consider the standard construction of
%non-uniformly expanding local diffeomorphisms which are not
%expanding maps~\cite{AV13}.
The action of $\Gamma$ is said to be \emph{exact} if for every
open set $B$ of $M$
%there are maps $g_1, \dots g_n$ in $\Gamma$
%such that $M = g_1(B)\cup\dots \cup g_n(B)$. On the other hand, we
%say that the action is \emph{ $m$-exact} if for every open set $B$
%of $M$
there are maps a sequence of maps $(g_n)_n$ in $\Gamma$
such that
$$
     M = \bigcup_{n\in\mathbb{N}} g_n(B) \qquad \text{modulo a set of zero $m$-measure.}  %\bigcup_{n\geq 1} g_n(B).
$$
%Notice that topologically exact implies $m$-exact but the
%reciproque is not necessarily true.

%On the other hand, we say that $F$ is
%\emph{backward minimal} if every point $x\in M$ has a dense preorbit
%$\{y\in M: \ \text{there is $\omega \in \Omega$ and $n\in \mathbb{N}$ such that } x=f^{n}_\omega(y)\}$.
%This property implies in particular that $F$ is \emph{topologically transitive}, i.e., for any pair of non-empty open sets $U$ %and $V$ there are
%$\omega \in \Omega$ and $n \in \mathbb{N}$ such that
%$f_\omega^n (U) \cap V \not = \emptyset$.

\begin{maintheorem} \label{thmB} Every exact
non-uniformly expanding finitely generated semigroup action of
  $C^{1+\alpha}$ local diffeomorphisms of a compact manifold is
ergodic with respect to Lebesgue measure.
\end{maintheorem}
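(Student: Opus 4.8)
The plan is to exploit the non-uniform expansion along the fixed orbital branch $\omega$ to build, for Lebesgue-almost every point, arbitrarily small inverse branches of the compositions $f^n_\omega$ with uniformly bounded distortion, and then to combine this with exactness to propagate positive density. The starting point is that condition \eqref{eq0} gives a full-measure set on which $\frac{1}{n}\sum_{i=0}^{n-1}\log\|Df_{\omega_{i+1}}(f^i_\omega(x))^{-1}\|$ has negative $\limsup$; a Pliss-type argument (hyperbolic times, in the spirit of Alves--Bonatti--Viana) then produces, for each such $x$, infinitely many integers $n$ and a neighborhood $V_n(x)$ of $x$ on which $f^n_\omega$ is a diffeomorphism onto a ball of some fixed radius $\delta_0>0$, with $\|(Df^n_\omega|_{V_n(x)})^{-1}\|$ contracting geometrically along the backward iterates. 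Here the $C^{1+\alpha}$ hypothesis is essential: it is exactly what upgrades the geometric contraction into a \emph{bounded distortion} estimate on these inverse branches, so that $f^n_\omega$ restricted to $V_n(x)$ distorts the ratio of Lebesgue densities by at most a universal constant.

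Next I would argue by contradiction: suppose $A$ is a $\Gamma$-invariant set with $0<m(A)<1$. Apply the above to a density point $x$ of $A$ (or of its complement). For a hyperbolic time $n$ of $x$, the inverse branch of $f^n_\omega$ on the $\delta_0$-ball around $f^n_\omega(x)$ pulls back a definite-size ball to the small neighborhood $V_n(x)$; bounded distortion then forces $f^n_\omega(V_n(x))$ to contain a fixed-radius ball in which $A$ has density bounded below by some $\theta_0>0$ depending only on the distortion constant and on $m(A)$, \emph{not} on $n$. Now invoke $\Gamma$-invariance of $A$: since $A\supset V_n(x)\cap A$ and $f^n_\omega\in\Gamma$, we have $f^n_\omega(V_n(x)\cap A)\subset A$, hence $A$ has density at least $\theta_0$ in some ball $B$ of radius $\ge \delta_0$. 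Finally use \emph{exactness}: there are $g_k\in\Gamma$ with $M=\bigcup_k g_k(B)$ up to a null set, and each $g_k$, being a composition of the finitely many $C^{1+\alpha}$ generators, distorts Lebesgue measure on the compact $M$ by at most a constant $L_k<\infty$; consequently $A=\bigcup_k g_k(A\cap B)$ modulo null sets, and one must argue that the lower density bound survives. This last propagation is the delicate point: a single $g_k$ need not have bounded distortion uniformly, so the cleanest route is to run the hyperbolic-time/bounded-distortion machinery at the scale $\delta_0$ to conclude that the set of density points of $A$ is \emph{open modulo null sets} — more precisely, that if $A$ has positive density in a $\delta_0$-ball then $m(A\cap B')>0$ for every ball $B'$, and then exactness forces $m(A)$ and $m(M\setminus A)$ to both be positive only if\ldots — contradiction.

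Carrying this out, the structural spine is: (i) Pliss lemma $\Rightarrow$ infinitely many hyperbolic times with definite-size expanding neighborhoods; (ii) $C^{1+\alpha}$ $\Rightarrow$ bounded distortion on the associated inverse branches at scale $\delta_0$; (iii) invariance + bounded distortion $\Rightarrow$ density of $A$ at a point propagates to a uniform lower bound on a $\delta_0$-ball; (iv) exactness $\Rightarrow$ that lower bound spreads over all of $M$, so $m(A)=1$. The main obstacle I anticipate is step (iv): reconciling the uniform, distortion-controlled estimates at the fixed scale $\delta_0$ produced by the orbital branch $\omega$ with the a priori \emph{uncontrolled} distortion of the arbitrary elements $g_k$ supplied by exactness. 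The right workaround is to phrase everything in terms of a Lebesgue density/zero-one law: show that $\{x : x \text{ is a density point of } A\}$ contains, together with any point, a whole $\delta_0$-ball up to null sets, and then a standard covering/chaining argument using the $g_k$ closes the loop without ever needing good distortion for the $g_k$ individually. I would also need the general non-autonomous ergodicity statement established earlier in the paper (for the sequence $(f_{\omega_i})_i$) to handle the $\omega$-fiber cleanly, reducing Theorem B to checking its hypotheses: non-uniform expansion is \eqref{eq0}, and exactness of the semigroup supplies the "covering" hypothesis needed there.
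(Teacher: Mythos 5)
Your overall architecture matches the paper's: reduce Theorem~B to a local ergodicity statement for the non-autonomous system $f_{1,\infty}=(f_{\omega_n})_n$ along the fixed branch $\omega$ (via Pliss/hyperbolic times + $C^{1+\alpha}$ bounded distortion), obtain an open set $B$ with $m(B\setminus A)=0$, and then propagate to $m(A)=1$ using exactness. You also correctly identify the key subtlety in the exactness step and resolve it the right way: once you have $m(B\setminus A)=0$ (not merely positive density), the fact that $f(B)\setminus A\subset f(B\setminus A)$ together with non-singularity of the maps finishes the argument with no distortion control on the $g_k$ whatsoever; this is exactly the paper's Proposition~\ref{prop-ergodic} in Section~\ref{sec:main-thm-semigroups}.

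However, there is a genuine gap in your step~(iii): the density-point argument you sketch is not valid for Theorem~B without a conformality hypothesis. The Lebesgue density theorem gives density~$1$ along shrinking \emph{balls} (or regularly contracting families), but the hyperbolic preballs $V_n(x)$ are only known to map homeomorphically onto $\delta$-balls with geometric backward contraction; without conformality their shape can be arbitrarily distorted (e.g.\ very thin ellipsoids), so the ratio of $m(V_n(x))$ to the measure of the smallest enclosing ball can tend to zero. In that case density $1$ of $A$ at $x$ along balls does \emph{not} give density close to $1$ of $A$ inside $V_n(x)$, and pushing forward by $f^n_\omega$ with bounded distortion gives you nothing. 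This is precisely why the paper uses two different local-ergodicity criteria: Proposition~\ref{prop-ergodic} proves local ergodicity for Theorem~\ref{thmB} via a Vitali-type covering of a compact core of $A$ by pairwise disjoint preball pieces $W_i$, with bounded distortion but no density points; while Proposition~\ref{prop-ergodic-conformal}, which does run the density-point argument you describe, additionally requires the preballs to be \emph{regular} (inequality~\eqref{conformal-def}), a property obtained in Proposition~\ref{prop-conformal} precisely from conformality — and is therefore used only for Theorem~\ref{thmC}. If you invoke the paper's Theorem~\ref{thm1} as a black box, as your final paragraph suggests, the proof is correct; but the interior of that theorem is the Vitali covering argument, not the density-point argument your steps~(i)--(iii) gesture at.
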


It is clear that there are no uniformly expanding semigroup
actions of diffeomorphisms. Indeed, by definition, there exist
$\omega$ and $n\geq 1$ large enough such that
$\|Df^n_\omega(x)^{-1}\|<1$ for all $x\in M$. In other words,
there exists an uniformly expanding map $g$ in the semigroup
$\Gamma$ which forbids $\Gamma$ to be a semigroup of
diffeomorphisms. In fact, we will show that there are no
non-uniformly expanding finitely generated semigroup actions of
diffeomorphisms. Because of this, in \cite{EGZ3} the authors
introduced a weak form of non-uniform expansion.  Namely, they ask
the existence of a constant $a>0$ such that for $m$-almost every
$x\in M$ there is $\omega\in \Omega$ such that
\begin{equation}
\label{eq1}
   \limsup_{n\to\infty} \frac{1}{n} \sum_{i=0}^{n-1}
  \log \|Df^{}_{\omega_{i+1}}(f^i_\omega(x))^{-1}\|<-a.
\end{equation}
In this case we say that the action of $\Gamma$ is
\emph{non-uniformly strong expandable}. They constructed a large
class of examples of semigroup action of diffeomorphisms
satisfying this non-uniform expansion. They proved the ergodicity 
of a finitely generated non-uniformly expanding action with a 
finite Markov partition. However, the existence of finite Markov partitions for 
finitely generated expanding actions seems to be crucial assumption, because they have only finitely 
generated Markov partitions under even strong condition of conformality (see \cite{NGF}).

In the recent paper \cite[Theorem A]{RZ16} Rashid and Zamani claim
that every non-uniformly strong expandable transitive finitely
generated \emph{semigroup} action of conformal \emph{local}
$C^{1+\alpha}$-diffeomorphisms is ergodic with respect to
Lebesgue. However, the proof only works for group actions of
diffeomorphisms (arguments in pg.~8, lines~3-4 in the proof of
Theorem~A cannot be correctly applied for forward invariant sets).
Nevertheless, modifying slightly the assumptions replacing
transitivity by exactness one can recover easily the result for
semigroups. In fact, we will obtain this result assuming a weaker
notion of non-uniformly expansion. Namely, we assume that the
action of $\Gamma$ is \emph{non-uniformly expandable}, that is,
for $m$-almost every $x\in M$ there exists $\omega\in \Omega$ such
that~\eqref{eq0} holds.

\begin{maintheorem} \label{thmC}
Every exact non-uniformly expandable finitely generated semigroup
action of conformal  $C^{1+\alpha}$ local diffeomorphisms of a
compact manifold is ergodic with respect to Lebesgue measure.
\end{maintheorem}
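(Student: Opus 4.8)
\emph{Sketch of the intended argument.} The plan is to prove the equivalent assertion that every $\Gamma$-invariant set $A\subseteq M$ with $m(A)>0$ satisfies $m(A)=1$, and I would split this into two steps: (i) produce an open ball that lies in $A$ modulo an $m$-null set, and (ii) spread it over $M$ by exactness. Step (ii) is short: if $B_0$ is open with $m(B_0\setminus A)=0$, exactness furnishes maps $g_n\in\Gamma$ with $M=\bigcup_{n}g_n(B_0)$ modulo an $m$-null set; since each $g_n$ is a $C^1$ map of the compact manifold $M$ it sends $m$-null sets to $m$-null sets, so $m\bigl(g_n(B_0)\setminus A\bigr)\le m\bigl(g_n(B_0\setminus A)\bigr)=0$, using $g_n(B_0\cap A)\subseteq A$ from $\Gamma$-invariance; a countable union then gives $m(M\setminus A)=0$.

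For step (i) I would run a Sullivan--Shub density argument along a \emph{single} orbital branch. Pick a Lebesgue density point $x_0$ of $A$ which also satisfies the non-uniform expansion hypothesis --- both conditions hold on full $m$-measure sets, so such a point exists --- so there are $\omega=\omega_1\omega_2\cdots\in\Omega$ and $c>0$ with $\prod_{i=0}^{n-1}\|Df_{\omega_{i+1}}(f^i_\omega(x_0))^{-1}\|<e^{-cn}$ for every large $n$ (a negative $\limsup$ of the averages forces the inequality for all large $n$). Conformality is used first to turn this into genuine expansion of the full derivative, $\|Df^n_\omega(x_0)v\|\ge e^{cn}\|v\|$ for all $v$, so one can choose radii $r_n\to0$ for which $f^n_\omega$ restricts to a diffeomorphism of $B(x_0,r_n)$ onto a set $U_n$ that contains a ball $B(f^n_\omega(x_0),\rho)$ of a \emph{fixed} radius $\rho>0$ and has $m(U_n)$ comparable to $m\bigl(B(f^n_\omega(x_0),\rho)\bigr)$, uniformly in $n$. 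The key estimate I would then establish is bounded distortion along this branch: there is $K\ge1$ with $K^{-1}\le|\det Df^n_\omega(y)|/|\det Df^n_\omega(z)|\le K$ for all $y,z\in B(x_0,r_n)$; this comes from telescoping $\log|\det Df^n_\omega|$, using the $C^{1+\alpha}$-regularity of the generators, and controlling the intermediate distances $\mathrm{dist}(f^i_\omega(y),f^i_\omega(z))$ by a geometric factor $e^{-c(n-i)/2}$, which in turn requires passing to an infinite sequence of ``hyperbolic'' times supplied by the Pliss lemma. Given this, $\Gamma$-invariance of $A$ yields $(f^n_\omega)^{-1}(A^c)\subseteq A^c$, so the change-of-variables formula gives
\[
\frac{m\bigl(A^c\cap B(f^n_\omega(x_0),\rho)\bigr)}{m\bigl(B(f^n_\omega(x_0),\rho)\bigr)}\;\le\;\mathrm{const}\cdot K\cdot\frac{m\bigl(A^c\cap B(x_0,r_n)\bigr)}{m\bigl(B(x_0,r_n)\bigr)}\;\longrightarrow\;0\quad(n\to\infty),
\]
the limit because $x_0$ is a density point of $A$. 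Passing to a subsequence with $f^n_\omega(x_0)\to y^*\in M$ and taking $B_0=B(y^*,\rho/2)$, which lies in $B(f^n_\omega(x_0),\rho)$ for large $n$, gives $m(B_0\setminus A)=0$, completing step (i).

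The hard part is the bounded distortion estimate. Because the expansion along the branch $\omega$ of $x_0$ is only asymptotic there is no uniform hyperbolicity to lean on, and the branch itself depends on $x_0$; one must manufacture, by a Pliss-type argument, times at which backward contraction is genuinely geometric before the H\"older estimates can be summed. This is precisely where conformality is indispensable: for a conformal $C^{1+\alpha}$ map $\|Df^n_\omega(\cdot)^{-1}\|^{-1}$ is a product of scalar H\"older cocycles, which makes both the pull-back of round balls and the comparison of Jacobians along nearby orbits clean, whereas dropping conformality is what forces the different, fixed-branch argument behind Theorem~\ref{thmB}. The remaining ingredients --- Lebesgue density points, the change-of-variables bound, and the exactness covering --- are routine.
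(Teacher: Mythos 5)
Your proposal follows essentially the same route as the paper: pick a density point of $A$ carrying a non-uniformly expanding branch, use the Pliss lemma to extract hyperbolic times, use $C^{1+\alpha}$-regularity of the Jacobians for bounded distortion along the branch, use conformality to control the shape of pulled-back balls, push the density through the branch to produce an open ball $B_0$ with $m(B_0\setminus A)=0$, and finally spread $B_0$ over $M$ by exactness exactly as in the paper's Proposition on $m$-exactness. The only cosmetic difference is bookkeeping: you parametrise by round balls $B(x_0,r_n)$ whose images contain a fixed ball, whereas the paper parametrises by \emph{hyperbolic preballs} (exact preimages of a fixed ball $B(f^n_\omega(x_0),\delta)$) and compares them to their inscribed/circumscribed balls via the ``regular preball'' estimate, which is the cleaner way to make your ``comparable uniformly in $n$'' claim precise.
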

Recall that a local diffeomorphism $g$ is said to be
\emph{conformal} if there exists a function $a:M\to \mathbb{R}$
such that for all $x \in M$ we have that $Dg(x)= a(x) \,
\mathrm{Isom}(x)$, where $\mathrm{Isom}(x)$ denotes an isometry of
$T_xM$. From the above result one obtains as a corollary the main
result of~\cite{BFMS} about the ergodicity of the expanding
minimal semigroup actions of diffeomorphisms. A semigroup action
generated by $C^1$-diffeomorphisms $f_1,\dots,f_d$ of $M$ is said
to be \emph{expanding} if for every $x\in M$ there exists $h$ in
the inverse semigroup (the semigroup generated by inverse maps
$f_1^{-1},\dots,f_d^{-1}$) such that $\|Dh(x)^{-1}\|<1$. It is not
difficult to see that if the semigroup action is expanding and
minimal then action of the inverse semigroup is non-uniformly
expandable and exact. Hence, by the above result one gets that the
action is ergodic with respect to Lebesgue measure whether
$f_1,\dots,f_d$ are conformal $C^{1+\alpha}$-diffeomorphisms
(\cite[Thm.~B]{BFMS}). We provide more details and new examples
where Theorem~\ref{thmC} applies in the last section of this work.

Observe that conditions~\eqref{eq:uniform-expanding},~\eqref{eq0}
and \eqref{eq1} only require the existence of a sequence of
functions satisfying the corresponding property. This is in fact
because the above results are actually a consequence of an
abstract theory in the context of non-autonomous discrete
dynamical systems in compact metric spaces with non-uniform
expansion. In the next section, \S\ref{sec:non-autonomous}, we
will develop this theory and in \S\ref{sec:main-result} we will
provide the main results for non-autonomous discrete dynamical
systems. After that in \S\ref{sec:main-thm-semigroups}, we obtain
as a consequence the above results.

\section{Non-autonomous discrete dynamical systems with non-uniform expansion}
\label{sec:non-autonomous}

A \emph{non-autonomous discrete dynamical system} is a pair $(M,
f_{1,\infty})$ where $M$ is a compact metric space and
$f_{1,\infty}=(f_n)_{n\in\mathbb{N}}$ is a sequence of continuous
maps from $M$ to itself. As it is usual, for each $k \in
\mathbb{N}$, we denote by $f_{k,\infty}$ the sequence of maps
$f_{k+n}:M\to M$ for $n\in \mathbb{N}$ and
$$
   f^{0}_k\eqdef \mathrm{id}  \ \ \text{and} \ \ f^n_k\eqdef
   f^{}_{k+n-1}\circ \dots \circ f^{}_{k+1}\circ
   f^{}_{k}   \qquad n \in \mathbb{N}.
$$
Associated with this system we have a skew-product map $F$ on
$\mathcal{M}=\mathbb{N}\times M$ given by $F(k,x)=(k+1,f_k(x))$.
Observe that $F^n(k,x)=(k+n,f^n_k(x))$ for all $n\geq 0$.

We consider a Borel probability
%regular
%{\color{red} I think any probability Borel measure is regular! I have seen some statement in Viana and Oliveira's book}
%{\color{blue} True. I do not know this point. Thanks}
measure $m$ on $M$  which  is \emph{non-singular} for
$f_{1,\infty}$, that is, both $m(f_n(A)) = 0$ and $m(f^{-1}_n(A))
= 0$ whenever $m(A) = 0$ for all $n \in \mathbb{N}$.
%We also assume that  the measure of small open balls is comparable  with its radius, i.e.,
% there exists $R>0$ such that $a(r)\leq m(B(x,r))\leq b(r)$ for any open ball $B(x, r)$ in $M$ of radius $0<r < R$ where
%$a$, $b$ are positive  functions.
We want to understand the long-term behavior of the fiberwise
orbits
%$F$-orbit
of typical points in $M$ with respect to the measure~$m$.
%By the $F$-orbit of a point $x\in M$ we understand the set  $\mathcal{O}(x)=\{f_k^n(x): k,n\in \mathbb{N}\}$. Observe that
%$f_n(\mathcal{O}(x))\subset \mathcal{O}(x)$ for all $n\in\mathbb{N}$. We will say that the sets satisfying this condition are
% \emph{forward $F$-invariant} sets. To undertand the limit set of typical orbits,
To do this, we will study \emph{forward $f_{1,\infty}$-invariant}
sets, i.e, measurable sets $A$ so that $f_n(A)\subset A$ for all
$n\in\mathbb{N}$.
%In particular, we are interested in understanding the
% we first study
%(the number of ergodic components)
%ergodicity of the measure $m$.
%and after that from this property we obtain the behavior of the limit set of typical points. Namely,
Namely, we will study the following definition:
%Recall that a forward $F$-invariant set $A$ with $m(A) >0$ is called an
%\emph{ergodic component} of $m$ with respect to $F$,
%if it does not admit any smaller forward
%$F$-invariant subset with positive $m$-measure.
%The measure $m$ is called \emph{$f_{1,\infty}$-ergodic} if $M$ is an ergodic component or, equivalentely,
%if $m(A)\in \{0,1\}$ for all forward $F$-invariant measurable set $A$ of $M$.

%\subsection{Local ergodicity} The following notion will be useful
%to prove ergodicity in ours setting.

\begin{definition}
\label{local-ergodicity} We say that a measure $m$ is
\emph{locally $f_{1,\infty}$-ergodic}  if for every forward
$f_{1,\infty}$-invariant measurable set $A$ of $M$ with positive
$m$-measure, there exists an open set $B$ of $M$ such that
$m(B\setminus A)=0$. If the measure of $B$ is uniformly bounded
away from zero, we say that $m$ is \emph{locally strong
$f_{1,\infty}$-ergodic}.
%{\color{red} what do you mean by the last sentense?}
%{\color{blue}
This means that there is $\varepsilon>0$ such that for every
forward $f_{1,\infty}$-invariant set $A$ of $M$ with positive
measure there exists an open set $B$ with $m(B)>\varepsilon$ such
that $m(B\setminus A )=0$.
\end{definition}

Firstly we give some basic properties of $m$ which will be useful later.

\begin{lemma}
\label{lemma-lower_comparable} The support of $m$ is a forward
$f_{1,\infty}$-invariant set. Moreover, for any $r>0$ there exists
$b_1(r)>0$ such that $m(B(x,r))> b_1(r)$, for every $x \in
\mathrm{supp}(m)$.
\end{lemma}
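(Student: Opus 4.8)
The plan is to treat the two assertions separately, starting with forward invariance of $\mathrm{supp}(m)$, which is the easier half. Let $S=\mathrm{supp}(m)$. To show $f_n(S)\subset S$ for every $n$, I would argue by contradiction: if $f_n(x)\notin S$ for some $x\in S$ and some $n$, then by definition of the support there is an open ball $B=B(f_n(x),\rho)$ with $m(B)=0$. Then $f_n^{-1}(B)$ is an open neighbourhood of $x$ (since $f_n$ is continuous), and because $m$ is non-singular for $f_{1,\infty}$ — specifically the hypothesis $m(f_n^{-1}(A))=0$ whenever $m(A)=0$ — we get $m(f_n^{-1}(B))=0$. This contradicts $x\in S$, since every open neighbourhood of a support point has positive measure. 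Hence $f_n(S)\subset S$. (Note only the $f_n^{-1}$ half of the non-singularity hypothesis is used here.)

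For the second assertion, the key point is a standard compactness argument. Fix $r>0$. The plan is to cover the compact set $S=\mathrm{supp}(m)$ by the open balls $\{B(x,r/2): x\in S\}$ and extract a finite subcover $B(x_1,r/2),\dots,B(x_N,r/2)$. Each $x_j\in S$, so $m(B(x_j,r/2))>0$; set $b_1(r)\eqdef \min_{1\le j\le N} m(B(x_j,r/2))>0$. Now for an arbitrary $x\in S$, it lies in some $B(x_j,r/2)$, and then the triangle inequality gives $B(x_j,r/2)\subset B(x,r)$, so $m(B(x,r))\ge m(B(x_j,r/2))\ge b_1(r)$. This yields the uniform lower bound, with $b_1(r)$ depending only on $r$ (through the chosen finite subcover) and not on the point $x$.

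The only mildly delicate point — and the one I would flag as the main obstacle, though it is routine — is making sure the covering argument produces a bound valid for \emph{all} $x\in S$ rather than only for the centres $x_j$; this is exactly what the passage from radius $r/2$ to radius $r$ via the triangle inequality takes care of, so no real difficulty arises. One should also remark that the constant $b_1(r)$ is non-decreasing in $r$, which will be convenient later, and that the argument uses nothing about the dynamics $f_{1,\infty}$ beyond the fact that $S$ is closed (hence compact, as a closed subset of the compact space $M$) — indeed it is a general fact about Borel probability measures on compact metric spaces.
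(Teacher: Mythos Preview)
Your proof of the first assertion (forward invariance of $\mathrm{supp}(m)$) is correct and follows exactly the paper's contradiction argument via non-singularity and continuity of $f_n$.

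For the second assertion your argument is correct but takes a genuinely different route from the paper. The paper argues by contradiction: assuming no uniform lower bound exists, it picks a sequence $(x_n)_n$ in $\mathrm{supp}(m)$ with $m(B(x_n,r))\to 0$, uses compactness of $\mathrm{supp}(m)$ to extract an accumulation point $z\in\mathrm{supp}(m)$, and concludes $m(B(z,r))\le \liminf_n m(B(x_n,r))=0$, contradicting $z\in\mathrm{supp}(m)$. Your finite-subcover argument is more direct and constructive: it actually produces an explicit $b_1(r)$ rather than merely asserting its existence, and it sidesteps the small subtlety in the paper's inequality $m(B(z,r))\le \liminf_n m(B(x_n,r))$ (which, strictly speaking, requires first comparing $B(z,r-\varepsilon)\subset B(x_n,r)$ for large $n$ and then letting $\varepsilon\to 0$). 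Both approaches use nothing beyond compactness of the support and the definition of $\mathrm{supp}(m)$; yours is arguably the cleaner of the two.
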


\begin{proof} Given $x \in \mathrm{supp}(m)$. At he first, we claim that $f_n(x)$ also belongs to the support of $m$.
By contradiction, assume that every small neighborhood of
$f_n(x)$ has null $m$-measure. Since $m$ is non-singular and $f_n$
is a continuous map this implies that small neighborhoods
of $x$ have also null $m$-measure. This contradicts $x \in \mathrm{supp}(m)$. The second claim is
straightforward. Assume again, by contradiction, that there exists $r>0$
and a sequence $(x_n)_{n\in\mathbb{N}}$  in $\mathrm{supp}(m)$
such that $m(B(x_n,r))\to 0$ as $n\to \infty$. Since
$\mathrm{supp}(m)$ is a compact set, the sequence must accumulate
at some point $z$ in the support of $m$. Then $ m(B(z,r))\leq
\liminf_{n\to \infty} m(B(x_n,r))=0 $ which contradicts $z \in
\mathrm{supp}(m)$.
%This completes the proof of the lemma.
\end{proof}
In the sequel, we want to study the local ergodicity of
non-autonomous systems. We will review the theory of \emph{hyperbolic preballs}
and \emph{hyperbolic times} introduced by Alves~\cite{A00} for
autonomous systems and extended by Alves and Vilarinho~\cite{AV13}
for random maps under assumptions of non-uniform expansions. This
theory has been deeply studied and generalized in many works
as~\cite{ABV00,ALP05,VV10}. We state it in the context of
non-autonomous systems.
\subsection{Hyperbolic preballs:}
Here, we give two sufficient conditions to get local
ergodicity. This starts by introducing the notion of hyperbolic
pre-balls.
\begin{definition}
Let $\delta>0$ and $0<\lambda<1$. Given $n\geq 1$  and $(k,x)\in
\mathcal{M}$, we say that a neighborhood $V_k^n(x)$ of $x$ in $M$
is a $(\delta,\lambda)$-\emph{hyperbolic preball} of order $n$ of
$f_{1,\infty}$ for the point $(k,x)$ if
\begin{enumerate}
\item \label{con1}  the map $f^{n}_k : M \to M$ sends $V^n_k(x)$ homeomorphically onto the open ball
$B(f_k^n(x),\delta)$ centered at the point $f_k^n(x)$ and of
radius $\delta$,
\item  \label{cond2} for every $y,z\in V_k^{n}(x)$
\begin{equation}
\label{item2}
d(f^i_k(y), f^i_k(z)) \leq \lambda^{n-i} \, d(f^n_k(y), f^n_k(z)) \qquad \text{for $i=0,\dots,n-1$.}
\end{equation}
\end{enumerate}
\end{definition}

\begin{remark} \label{rem-preballs}
Notice that~\eqref{con1} and \eqref{cond2} can be extended to the
closure of $V^n_k(x)$.
\end{remark}

In addition, we will need that the hyperbolic preballs have a good
control of the distortion with respect to the measure $m$. To be
more clear, we give the following definition.
\begin{definition}
Let $\delta>0$ and $0<\lambda<1$. We say that a point $(k,x)\in
\mathcal{M}$ has a \emph{infinitely many
$(\delta,\lambda)$-hyperbolic preballs with bounded distortion} if
there exist a sequence of $(\delta,\lambda)$-hyperbolic preballs
$V^{n_i}_k(x)$  of order $n_i$ where $n_i \to \infty$ and a
constant $K=K(\delta,\lambda,k)>0$ such that for each
$i\in\mathbb{N}$,
\begin{equation}
\label{bounded-distortion-inequality}
     \frac{m(f^{n_i}_k(A))}{m(f^{n_i}_k(B))}
      \leq K  \, \frac{m(A)}{m(B)}
      \qquad \text{for all pair of measurable sets $A,B\subset V^{n_i}_k(x)$.}
\end{equation}
\end{definition}
%Observe that in the above definition $x$ needs to be in the
%support of $m$ and $B$ has to be of positive $m$-measure.
In what follows, we show local ergodicity under the assumption
that almost every point has infinitely many hyperbolic preballs
with bounded distortion. This assumption can be interpreted in two
different ways. The first criterium will be used to get local
ergodicity of non-uniform expanding non-autonomous systems. The
second will be applied latter for non-uniform expandable
non-autonomous systems.
\subsubsection{First criterium: preballs with bounded distortion}
We assume the existence of a state $\{k\}\times M$ in which almost
every point has infinitely many hyperbolic preballs with bounded
distortion.
\begin{proposition}
\label{prop-ergodic} If there is $k\in \mathbb{N}$ such that for
$m$-almost every point $x\in M$ there exist $\delta=\delta(x)>0$
and $0<\lambda=\lambda(x)<1$ so that $(k,x)$ has infinitely many
$(\delta,\lambda$)-hyperbolic preballs with bounded distortion
then $m$ is locally $f_{1,\infty}$-ergodic.
\end{proposition}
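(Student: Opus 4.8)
The plan is to show that any forward $f_{1,\infty}$-invariant set $A$ with $m(A)>0$ contains an open ball modulo a null set. Fix the index $k$ provided by the hypothesis. First I would observe that, by invariance under all $f_n$, the set $f^n_k(A)$ is contained in $A$ for every $n\ge 0$; in particular $f^{n_i}_k(A\cap V^{n_i}_k(x))\subset A$ for any hyperbolic preball. The idea is to pick, via a Lebesgue density argument, a point $x_0\in A$ that is simultaneously a density point of $A$ and a point having infinitely many $(\delta,\lambda)$-hyperbolic preballs $V^{n_i}_k(x_0)$ with bounded distortion constant $K$. (Both are full-measure conditions, so their intersection has full measure in $A$.) The crucial geometric input is that, because of the contraction estimate \eqref{item2}, the diameter of $V^{n_i}_k(x_0)$ is at most $\lambda^{n_i}\diam(M)\to 0$ as $i\to\infty$, so these preballs shrink to $x_0$ and can be used in the density computation.

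Next I would run the bounded-distortion comparison. Apply \eqref{bounded-distortion-inequality} with $A$ replaced by $V^{n_i}_k(x_0)\setminus A$ and $B$ replaced by $V^{n_i}_k(x_0)$: this yields
$$
\frac{m\big(f^{n_i}_k(V^{n_i}_k(x_0)\setminus A)\big)}{m\big(B(f^{n_i}_k(x_0),\delta)\big)}
\le K\,\frac{m(V^{n_i}_k(x_0)\setminus A)}{m(V^{n_i}_k(x_0))}.
$$
Since $x_0$ is a density point of $A$ and $\diam V^{n_i}_k(x_0)\to 0$, the right-hand side tends to $0$; hence $m\big(f^{n_i}_k(V^{n_i}_k(x_0)\setminus A)\big)\to 0$ as $i\to\infty$. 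On the other hand, $f^{n_i}_k$ maps $V^{n_i}_k(x_0)$ homeomorphically onto the ball $B_i\eqdef B(f^{n_i}_k(x_0),\delta)$ by \eqref{con1}, and $f^{n_i}_k(V^{n_i}_k(x_0)\cap A)\subset A$. Therefore $m(B_i\setminus A)\le m\big(f^{n_i}_k(V^{n_i}_k(x_0)\setminus A)\big)\to 0$.

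To conclude I would pass to the limit on the centers: since $M$ is compact, after passing to a subsequence the centers $f^{n_i}_k(x_0)$ converge to some point $p$, and for $i$ large the ball $B(p,\delta/2)$ is contained in $B_i=B(f^{n_i}_k(x_0),\delta)$. Hence $m\big(B(p,\delta/2)\setminus A\big)\le m(B_i\setminus A)\to 0$, so $m\big(B(p,\delta/2)\setminus A\big)=0$, and $B=B(p,\delta/2)$ is the desired open set. The main obstacle I anticipate is the bookkeeping in the density step: one must be sure that the density point of $A$ is taken with respect to a differentiation basis (small balls of $M$) that actually controls the ratio $m(V^{n_i}_k(x_0)\setminus A)/m(V^{n_i}_k(x_0))$; this requires knowing that the preballs $V^{n_i}_k(x_0)$ are not too eccentric — e.g. that they each contain a ball of radius comparable to their diameter — which should follow by pulling back a ball inside $B(f^{n_i}_k(x_0),\delta)$ through the homeomorphism together with \eqref{item2} and, if needed, using the measure $m$ in place of a metric density (a Vitali-type covering / Lebesgue density theorem for the Borel measure $m$ on $M$). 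Once that comparability is in hand, the rest is the short computation above.
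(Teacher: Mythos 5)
Your argument is not a proof of Proposition~\ref{prop-ergodic} as stated; it is, instead, essentially the paper's proof of Proposition~\ref{prop-ergodic-conformal} (the ``second criterium''), and the extra hypotheses that make it work there are missing here.

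The core of your argument is to choose a point $x_0$ that is simultaneously a density point of $A$ and has infinitely many preballs, and then let the density-point limit drive $m(V^{n_i}_k(x_0)\setminus A)/m(V^{n_i}_k(x_0))\to 0$. This requires two things not available under the hypotheses of Proposition~\ref{prop-ergodic}: (i) that $(M,d,m)$ satisfies the density point property~\eqref{eq:density}, and (ii) that the nested preballs $V^{n_i}_k(x_0)$ form a differentiation basis for $m$ at $x_0$ --- in practice, that they have bounded eccentricity relative to balls, as in~\eqref{conformal-def}. In the present proposition, $m$ is only assumed to be a non-singular Borel probability on a compact metric space; the paper explicitly remarks (citing~\cite{KRS16}) that the density point property can fail in this generality, and it is introduced as an additional assumption only in Proposition~\ref{prop-ergodic-conformal}. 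Your closing remark that the comparability ``should follow by pulling back a ball inside $B(f^{n_i}_k(x_0),\delta)$ through the homeomorphism together with \eqref{item2}'' is where the attempt breaks down: inequality~\eqref{item2} controls forward contraction (bounding how much $f^i_k$ \emph{shrinks}), not the eccentricity of the preimage, so it gives an upper bound on $\diam V^{n_i}_k(x_0)$ but no lower bound on the radius of an inscribed ball. Obtaining such regularity of preballs is precisely what Proposition~\ref{prop-conformal} does, and it relies on conformality of the fiber maps, a locally geodesic space, and a locally doubling measure --- none of which is assumed here.

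The paper's proof of Proposition~\ref{prop-ergodic} avoids the density point property altogether. It fixes $\delta,\lambda$ on a positive-measure sub-invariant set $\tilde A$, approximates $\tilde A$ by a compact $\tilde A_c$ inside and an open $\tilde A_o$ outside using inner/outer regularity of $m$, covers $\tilde A_c$ by the inner parts $W_i$ of preballs, extracts a disjoint subfamily by a Vitali/Besicovitch-type greedy selection ordered by the order $n_i$ of the preballs, and uses the bounded distortion estimate~\eqref{bounded-distortion-inequality} together with Lemma~\ref{lemma-lower_comparable} to get $m(W_i)\ge\tau m(V_i)$ and hence $\sum_I m(W_i)\ge (\tau/2)m(\tilde A)$. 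A counting argument then produces some $W_i$ in which $A$ has relative measure at least $1-2\gamma/\tau$, and pushing forward with bounded distortion yields a ball $B(f^{n_i}_k(x_i),\delta/4)$ in which $A$ has relative measure at least $1-2K\gamma/\tau$; letting $\gamma\to 0$ and taking an accumulation point gives the required ball. This route works under the stated hypotheses because the only measure-theoretic input is regularity of Borel measures and Lemma~\ref{lemma-lower_comparable}, not a differentiation theorem. If you want to salvage your argument, you should prove Proposition~\ref{prop-ergodic-conformal}, not Proposition~\ref{prop-ergodic}.
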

\begin{proof}
Given $\delta>0$ and $0<\lambda<1$, we define
$$
  Z_{\delta,\lambda}= \{(k,x) \in \mathcal{M}:
  \text{the point $(k,x)$ has infinitely many
  $(\delta,\lambda$)-hyperbolic preballs}   \}.
$$
Let $Z$ be the union of $Z_{\delta,\lambda}$ for $0<\delta$ and
$0<\lambda<1$. We denote by $Z(k)$ the section of $Z$ on
$\{k\}\times M$. That is, $Z(k) = \{x\in M: (k,x)\in Z\}$. Since
$Z_{\delta,\lambda} \subset Z_{\delta',\lambda'}$ for any
$0<\delta'\leq \delta$ and $0<\lambda\leq \lambda'<1$ we can write
$$
    Z\eqdef \bigcup_{0<\delta} \bigcup_{0<\lambda<1} Z_{\delta,\lambda} =
    \bigcup_{n>1} Z_{1/n, 1-1/n}.
$$
Let $A$ be a forward $f_{1,\infty}$-invariant %mesurable
set of $M$ with positive $m$-measure. Since the support of $m$ is
also forward $f_{1,\infty}$-invariant we can assume that $A\subset
\mathrm{supp}(m)$. We need to show that there exists an open set
$B$ of $M$ so that $m(B\setminus A)=0$. Notice that, by
assumption, there exists $k\in \mathbb{N}$ such that $m(A\cap
Z(k))= m(A)>0$. Thus, there exist $\delta=\delta(A)>0$ and
$0<\lambda=\lambda(A)<1$ such that $\tilde{A}=A \cap
Z_{\delta,\lambda}(k)$ has positive $m$-measure, where $
  Z_{\delta,\lambda}(k)= \{x\in M: (k,x)\in Z_{\delta,\lambda}\}
$. Moreover, since  $\tilde{A}\subset A$ and $A$ is by assumption
forward $f_{1,\infty}$-invariant then $f_{n}(\tilde{A})\subset A$
for all $n\in\mathbb{N}$. Additionally, every point $(k,x)$ where
$x\in \tilde{A}$ has infinitely many $(\delta,\lambda)$-hyperbolic
preballs. The rest of the proof follows the argument of
\cite[Prop.~2.13]{AV13} which is inspired by \cite{ABV00} .

Let $\gamma>0$ be some small number. By the regularity of $m$ and
since $m(\tilde{A})>0$, there is a compact set $\tilde{A}_c
\subset \tilde{A}$ and an open set $\tilde{A}_o \supset \tilde{A}$
such that $m(\tilde{A}_o\setminus \tilde{A}_c) \leq \gamma
m(\tilde{A})$. Notice that, for any $x\in \tilde{A}_c$ we have a
$(\delta,\lambda)$-hyperbolic preball $V_k^n(x)$ of order $n=n(x)$
contained in $\tilde{A}_o$.
%{\color{red} since $A$ is subset of the support of $m$} {\color{blue} right}.
Let $W_k^n(x)$ be the part of $V^n_k(x)$ which is sent
homeomorphically by $f^n_k$ onto the open ball
$B(f^n_k(x),\delta/4)$. By compactness, there are $x_1,\dots,x_r
\in \tilde{A}_c$ such that
\begin{equation}
\label{eq:cover}
 \tilde{A}_c \subset W_1 \cup \dots \cup W_r \quad
    \text{where $W_i=W^{n_i}_k(x_i)$ and $n_i=n(x_i)$ for $i=1,\dots,r$. }
 \end{equation}
Assume that
$$
      \{n_1, \dots, n_r\} = \{ n_1^*,\dots,n_s^*\} \quad   \text{with $n_1^*<\dots<n_s^*$}.
$$
Let $I_1$ be the maximal subset of $\{1,\dots,r\}$ such that for
each $i\in I_1$ both $n_i=n_1^*$ and $W_i \cap W_j = \emptyset $
for every $j \in I_1$ with $j\not=i$. Inductively we define
$I_\ell$ for $\ell=2,\dots,s$ as follows: supposing that $I_1,
\dots, I_{\ell-1}$ have already been defined, let $I_\ell$ be a
maximal set of $\{1,\dots ,r\}$ such that for each $i \in I_\ell$
both $n_i =n_\ell^*$ and $W_i \cap W_j =\emptyset$ for every $j
\in I_1 \cup \dots \cup I_k$ with $i \not = j$. Set  $I = I_1 \cup
\dots \cup I_s$. By construction, we have that $W_i$ for $i\in I$
are pairwise disjoint sets.

We will prove that the family of set $V_i=V_k^{n_i}(x_i)$ for $i\in I$ covers $\tilde{A}_c$.
Indeed, by construction, given any $W_j$ with $j=1,\dots,r$, there is
some $i \in I$ with $n_i \leq n_j$ such that $W_j \cap W_i \not =\emptyset$.
Taking images by $f^{n_i}_k$, we have $f^{n_i}_k(W_j)\cap B(f^{n_i}_k(x_i),\delta/4)\not=\emptyset$.
Since $W_j$ is contained in the $(\delta,\lambda)$-hyperbolic preball $V_j$ of order $n_j$  and $n_i \leq n_j$, by definition of hyperbolic preballs,
$$
\diam (f_k^{n_i}(W_j))\leq \lambda^{n_j - n_i} \diam (f_k^{n_j}(W_j))  \leq \frac{\delta}{2}. %ￎﾻ^{n_j￢ﾈﾒn_i} \leq \frac{\delta}{2}
$$
Hence $f^{n_i}_k(W_j)\subset B(f^{n_i}_k(x_i),\delta)$.
This gives that $W_j \subset V_i$. Taking into account \eqref{eq:cover}, we get that the family of sets
$V_i$ for $i\in I$ covers $\tilde{A}_c$.

Observe that by the bounded distortion property \eqref{bounded-distortion-inequality} applied to $A=W_i$ and $B=V_i$
we get $K=K(\delta,\lambda,k)>0$ such that
$$
     m(W_i) \geq  K^{-1} \frac{m(B(f^{n_i}_k(x_i),\delta/4))}{m(B(f^{n_i}_k(x_i),\delta))} \, m(V_i).
$$
According to Lemma~\ref{lemma-lower_comparable}, the measure of any ball centered at
a point in the support of $m$ is lower comparable with its radius and thus we can find a constant
$\tau=\tau(\delta,\lambda,k)>0$ so that $m(W_i)\geq \tau m(V_i)$ for all $i\in I$. Hence
\begin{align*}
m(\bigcup_{i\in I} W_i) = \sum_{i\in I} m(W_i) \geq \tau \sum_{i\in I} m(V_i) \geq \tau m(\bigcup_{i\in I} V_i) \geq \tau m(\tilde{A}_c) \geq \frac{\tau}{2} m(\tilde{A}).
\end{align*}
The last inequality is obtained from the fact that $m(\tilde{A}_c)>(1-\gamma) m(\tilde{A})$ and
choosing $\gamma>0$ small enough which it is possible because the constant $\tau$ does not depend on $\gamma$.
%\begin{equation*}
%\label{eq:tau}
%   m(\bigcup_{i\in I} W_i) \geq \frac{\tau}{2} m(\tilde{A}).
%\end{equation*}
Now, we are going to prove the existence of $i\in I$ in such away that
\begin{equation}
\label{eq:gamma}
  \frac{m(W_i\setminus \tilde{A})}{m(W_i)} < \frac{2\gamma}{\tau}.
\end{equation}
Indeed, otherwise we get the following contradiction.
\begin{align*}
\gamma m(\tilde{A}) \geq m(\tilde{A}_o\setminus \tilde{A}_c) \geq m(\bigcup_{i\in I} W_i \setminus \tilde{A})
\geq \frac{2\gamma}{\tau} m(\bigcup_{i\in I} W_i) > \gamma m(\tilde{A}).
\end{align*}
Finally, we obtain the required open ball $B$. Since $f^{n_i}_k(\tilde{A})\subset A$ and $f^{n_i}_k$ is injective on $W_i$, we have
$$
   m(f^{n_i}_k(W_i)\setminus A) \leq m(f^{n_i}_k(W_i)\setminus f^{n_i}_k(\tilde{A}))= m(f^{n_i}_k(W_i\setminus{\tilde{A}})).
$$
By the distortion property, relation \eqref{eq:gamma} and taking in mind that $f^{n_i}_k(W_i)=B(f^{n_i}_k(x),\delta/4)$, we get
 $$
\frac{m(B(f^{n_i}_k(x),\delta/4)\setminus A)}{m(B(f^{n_i}_k(x),\delta/4))} \leq
 \frac{m(f^{n_i}_k(W_i\setminus \tilde{A}))}{m(f^{n_i}_k(W_i))}
 \leq K \frac{m(W_i\setminus \tilde{A})}{m(W_i)} \leq \frac{2 K \gamma}{\tau},
 $$
which can obviously be made arbitrarily small, letting $\gamma\to
0$. From this, one easily deduces, taking an accumulation point of
this balls, that there is a  ball $B$ of radius $\delta/4$ where
the relative measure of $A$ is one. This completes the proof.
\end{proof}
\begin{remark} \label{rem-ergodic}  In the above proof, the radius of the obtained open
ball depends only on $\delta>0$ but it may be vary from an invariant set to another one. To get
strong local $f_{1,\infty}$-ergodicity, we must ask that $\delta>0$ and $0<\lambda<1$,
in the statement of the proposition, are
uniform on $x$. In other words, we need that $m(Z_{\delta,\lambda}(k))=1$, for some $k\in\mathbb{N}$.
%and that the open balls of radius $\delta$ have $m$-measure uniformly bounded way from zero.
\end{remark}

%{\color{red} Should be notice that: if almost every point in $A$ has infinite pre-ball on different fibers $\{k\}\times M$ then there is a subset of $A$ of positive measure, $A_k$ say, having positive measure and all point belonging to $A_k$ has infinite number of preballs in $\{k\}\times M$. However, the sets $A_k$'s may be non $F$-invariant.} {\color{blue} I totally agree}

\subsubsection{Second criterium: preballs with regularity}  Now, we assume that almost
every point $x$ has infinitely many hyperbolic preballs, but
probably in different states $\{k\}\times M$. This assumption is
obviously weaker than the previous condition. To prove the local
ergodicity, we also need to assume that the preballs have a good
control of the regularity.

\begin{definition}
Let $\delta>0$ and $0<\lambda<1$. We say that a point $(k,x)\in
\mathcal{M}$ has \emph{infinitely many regular
$(\delta,\lambda)$-hyperbolic preballs} if there exist a sequence
of $(\delta,\lambda)$-hyperbolic preballs $V_i=V^{n_i}_k(x)$  of
order $n_i$ where $n_i \to \infty$ and a constant
$L=L(\delta,\lambda,k)>0$ such that
\begin{equation}
\label{conformal-def}
     %\frac{m(B(x,R_i))}{m(B(x,r_i))}
      m(B(x,R_i))\leq L \, m(B(x,r_i)), \quad \text{for all $i\in\mathbb{N}$}
\end{equation}
where $B(x,R_i)$ and $B(x,r_i)$ are, respectively, the smallest ball around $x$ containing $V_i$ and the largest
ball around $x$ contained in $V_i$.
\end{definition}
The following proposition shows local ergodicity under the
assumption of the existence of infinitely many regular preballs
with bounded distortion. Here, we also need to assume that the
metric measure space $(M,d,m)$
satisfies the %following property:
%\begin{hypothesis}
\emph{density point property}. That is, for any measurable set $A$
of
$M$, %it holds %that
\begin{equation}\label{eq:density}
 \lim_{r\to 0^+ } \frac{m(A\cap B(x,r))}{m(B(x,r))}=1, \quad
 \text{ for $m$-almost all $x\in A$.}
\end{equation}
%\end{hypothesis}
This property holds in any metric space for which Besicovitch's Covering Theorem holds. In particular, it holds for
any Borel probability measure in Euclidean spaces. Also, it
is satisfied for any Borel probability measure in a Polish
ultra-metric space and for the Cantor space $2^\mathbb{N}$ with the
coin-tossing measure and the usual distance. In general metric
spaces this is not necessarily the case~\cite{KRS16}. As another relatively general mode of this property, one can
refer to the \emph{weak locally doubling} measure $m$ (see~\cite[Thm.~3.4.3]{HKST15}) in
the sense that
$$
    \limsup_{r\to 0^+} \frac{m(B(x, 2 r))}{m(B(x,r))} <\infty,
    \quad \text{for $m$-almost all $x\in M$.}
$$
%We will need to assume this local condition on the measure in a
%future step of the proof.
%for all $t>1$ sufficiently close to $1$.
 %%
%there is $\rho>0$ such that
%$$
%b_1(r)\leq m(B(x,r))\leq b_2(r)
%$$
%for any closed ball $B(x, r)$  of radius $0<r \leq \rho$ and $x$
%in the support of $m$ where $b_1$ and $b_2$ are positive functions
%so that
%$$
%   \limsup_{r\to 0^+} \frac{b_2(tr)}{b_1(r)} <\infty \quad
%\text{for all $t>1$ sufficiently close to $1$.}
%$$

\begin{proposition}
\label{prop-ergodic-conformal} Assume that $(M,d,m)$ satisfies the
density point property. If for $m$-almost every point $x\in M$
there are $k=k(x)\in\mathbb{N}$, $\delta=\delta(x)>0$ and
$0<\lambda=\lambda(x)<1$ so that $(k,x)$ has infinitely many
regular $(\delta,\lambda$)-hyperbolic preballs with bounded
distortion then $m$ is locally $f_{1,\infty}$-ergodic.
\end{proposition}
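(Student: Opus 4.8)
The plan is to imitate the argument of Proposition~\ref{prop-ergodic}, but since the relevant state $k=k(x)$ now varies from point to point, the Vitali‑type covering over a single fibre $\{k\}\times M$ is no longer available; instead I would run the whole argument at a single, carefully chosen density point of the invariant set. Let $A$ be a forward $f_{1,\infty}$-invariant measurable set with $m(A)>0$; as in the proof of Proposition~\ref{prop-ergodic} we may assume $A\subset \mathrm{supp}(m)$. By the density point property almost every point of $A$ is a density point of $A$, and by hypothesis almost every point of $M$ carries infinitely many regular $(\delta,\lambda)$-hyperbolic preballs with bounded distortion; since both are full $m$-measure conditions, I fix a single $x\in A$ enjoying both, together with the associated data $k=k(x)\in\mathbb{N}$, $\delta=\delta(x)>0$, $0<\lambda=\lambda(x)<1$, preballs $V_i=V^{n_i}_k(x)$ with $n_i\to\infty$, and the constants $K$, $L$ of \eqref{bounded-distortion-inequality} and \eqref{conformal-def}.

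The first step is to note that the preballs contract to $x$: applying \eqref{item2} with $i=0$ together with $f^{n_i}_k(V_i)=B(f^{n_i}_k(x),\delta)$ gives $\diam V_i\le 2\delta\lambda^{n_i}\to 0$, so the radii $R_i\ge r_i>0$ of the circumscribed and inscribed balls $B(x,r_i)\subset V_i\subset B(x,R_i)$ both tend to $0$. The second step is to estimate the $A^c$-fraction of $V_i$: chaining $V_i\subset B(x,R_i)$, the inclusion $B(x,r_i)\subset V_i$, the regularity bound $m(B(x,R_i))\le L\,m(B(x,r_i))$ of \eqref{conformal-def}, and finally the density point property \eqref{eq:density} at $x$ along $r=R_i\to 0$, one obtains
\[ \frac{m(V_i\setminus A)}{m(V_i)}\ \le\ L\,\frac{m(B(x,R_i)\setminus A)}{m(B(x,R_i))}\ \xrightarrow[i\to\infty]{}\ 0 . \]
The third step transports this to scale $\delta$ around the image point. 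Since $A$ is forward $f_{1,\infty}$-invariant, $f^{n_i}_k(A)\subset A$; since $f^{n_i}_k$ is injective on $\overline{V_i}$ (Remark~\ref{rem-preballs}) this yields $B(f^{n_i}_k(x),\delta)\setminus A\subset f^{n_i}_k(V_i\setminus A)$, and applying \eqref{bounded-distortion-inequality} to the pair $V_i\setminus A\subset V_i$ gives
\[ \frac{m\big(B(f^{n_i}_k(x),\delta)\setminus A\big)}{m\big(B(f^{n_i}_k(x),\delta)\big)}\ \le\ K\,\frac{m(V_i\setminus A)}{m(V_i)}\ \xrightarrow[i\to\infty]{}\ 0 , \]
all denominators being positive by Lemma~\ref{lemma-lower_comparable} since $x$ and $f^{n_i}_k(x)$ lie in $\mathrm{supp}(m)$. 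As $m$ is a probability measure this forces $m\big(B(f^{n_i}_k(x),\delta)\setminus A\big)\to 0$; passing to a subsequence with $f^{n_i}_k(x)\to y$ (so $y\in\mathrm{supp}(m)$, which is closed and forward invariant) and using $B(y,\delta/2)\subset B(f^{n_i}_k(x),\delta)$ for $i$ large, we get $m(B(y,\delta/2)\setminus A)=0$ while $m(B(y,\delta/2))>b_1(\delta/2)>0$ by Lemma~\ref{lemma-lower_comparable}. Hence $B=B(y,\delta/2)$ is the required open set and $m$ is locally $f_{1,\infty}$-ergodic.

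The step I expect to be the real point of the argument is the second one: because we can no longer average over a covering of the invariant set, the geometry at the single point $x$ must itself "see" that $x$ is a density point, and the preballs $V_i$ need not be round. The new regularity hypothesis \eqref{conformal-def} — which has no analogue in the first criterium — is exactly what bridges this gap, at the cost of requiring the density point property \eqref{eq:density} of $(M,d,m)$ rather than mere non-singularity; everything else (the reduction $A\subset\mathrm{supp}(m)$, the image inclusion, the distortion estimate, and the compactness at the end) is a routine combination of bounded distortion and compactness in the spirit of \cite{AV13,ABV00}.
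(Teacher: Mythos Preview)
Your proof is correct and follows essentially the same route as the paper's: pick a density point $x\in A$ carrying the regular preballs, bound $m(V_i\setminus A)/m(V_i)$ via the regularity constant $L$ and the density point property, push this forward by bounded distortion, and pass to an accumulation point of $f_k^{n_i}(x)$ to get the ball $B(y,\delta/2)$. The only notable difference is that you justify $R_i\to 0$ explicitly via the contraction estimate $\diam V_i\le 2\delta\lambda^{n_i}$ from~\eqref{item2}, whereas the paper simply asserts the preballs are nested; your version is cleaner on this point.
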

\begin{proof} Let $A$ be a forward $f_{1,\infty}$-invariant set
with positive $m$-measure. By the density point property
$m$-almost every point in $A$ is a density point. That is, it
satisfies~\eqref{eq:density}.
%{\color{red}why do assume it? $m$ is not Lebesgue} {\color{blue}
%Thanks again. Actually I was thinking that Lebesgue ergodic
%theorem works for any Borel measure in a metric space (I was
%thinking this because I recalled that Enrique, Yuri and Patricia
%used this theorem in his paper about ergodicity of skew-product in
%the cylindre). I have looking for where this theorem works. I find
%that the Lebesgue differenciation theorem is only known  in some
%cases: $\mathbb{R}^n$, in Polish ultrafilter space, for doubling
%measure and where Besicovitch covering theorem could be applied
%(for the moment I only see this last statement in $\mathbb{R}^n$
%but probably also it applies in a general metric space with this
%property. I also find a reference that I was attached (see theorem
%3.12) which applies under the assumption that we will need to
%impose in the next section for the measure $m$ (I will called
%\emph{comparable with radius of small balls}). I was talking about
%this assumption with Dominique the last week and he thinks that
%maybe it will be better to assume that the measure is doubling or
%has positive dimension which are stronger conditions but that
%actually the people knows or feel more confident}.
By the assumption we find a density point $x\in A$, $k=k(x)\in
\mathbb{N}$, $\delta=\delta(x)>0$ and $0<\lambda=\lambda(x)<1$ so
that $(k,x)$  has a nested sequence of regular
$(\delta,\lambda)$-preballs $V_i=V^{n_i}_k(x)$ of order $n_i\to
\infty$ with bounded distortion. Let $z$ be an accumulation point
of $f_k^{n_i}(x)$. Then taking a subsequence if it is necessary we
have $B(z,\delta/2)\subset B(f_k^{n_i}(x),\delta)$ for all $i$
large enough.  On the other hand, since $f^{n_i}_k(A)\subset A$
and $f^{n_i}_k$ is injective on $V_i$, we have
$$
   m(f^{n_i}_k(V_i)\setminus A) \leq m(f^{n_i}_k(V_i)\setminus f^{n_i}_k(A))= m(f^{n_i}_k(V_i\setminus{A})).
$$
By the distortion property and the regularity of the preballs,
having into account that $f^{n_i}_k(V_i)=B(f^{n_i}_k(x),\delta)$,
it follows that for every $i$ large enough
 \begin{align*}
\frac{m(B(z,\delta/2)\setminus A)}{m(M)} &\leq
 \frac{m(f^{n_i}_k(V_i)\setminus A)}{m(f^{n_i}_k(V_i))}
 \leq  \frac{m(f^{n_i}_k(V_i \setminus A))}{m(f^{n_i}_k(V_i))}
 \leq K \, \frac{m(V_i\setminus A)}{m(V_i)} \\
 &\leq K \, \frac{m(B(x,R_i)\setminus A)}{m(B(x,R_i))} \cdot
 \frac{m(B(x,R_i))}{m(B(x,r_i))}
 \leq K \, L \,  \frac{m(B(x,R_i)\setminus A)}{m(B(x,R_i))}
 \end{align*}
 where $B(x,R_i)$ and $B(x,r_i)$ are, respectively, the smallest ball around $x$ containing $V_i$ and the largest ball around $x$ contained in $V_i$. Taking limit as $i\to \infty$, since $V_i$ is nested then $R_i\to 0$ and since $x$ is a density point of $A$, we get that $m(B(z,\delta/2)\setminus A)=0$.
 This completes the proof of the proposition.
\end{proof}

\begin{remark} \label{rem-ergodic-conformal}  %strong ergodicity
%In the above proof the radius of the open ball that we obtained depends only on $\delta>0$ but this value will be possible different for each set $A$.
To get strong local $f_{1,\infty}$-ergodicity it suffices to ask that $\delta>0$ and $0<\lambda<1$ in the statement of the proposition are uniform on $x$. %In other words, we need that $m(Z_{\delta,\lambda}(k))=1$ for some $k\in\mathbb{N}$.
\end{remark}

\begin{remark} \label{rem1} Proof of Proposition~\ref{prop-ergodic-conformal}
actually shows the following: if $x$ is a density point of a
$f_{1,\infty}$-invariant set $A$ such that there is $k=k(x)\in
\mathbb{N}$, $\delta=\delta(x)>0$ and $0<\lambda=\lambda(x)<1$
then there is $z$ such that $m(B(z,\delta/2)\setminus A)=0$.
\end{remark}
%\begin{remark}
%The assumption in the proposition on the existence of infinitely
%many hyperbolic balls with conformality and bounded distortion for
%each $x\in M$ imply that $m$ is fully supported on $M$.
%\end{remark}

\subsection{Hyperbolic preballs with bounded distortion} %Conformal measure and conformal maps}
Here, we will study how we can get hyperbolic preballs with
bounded distortion. First we need the following lemma.
%and conformality.

\begin{lemma}\label{lem:distor}
For each $n\in\mathbb{N}$, consider functions $\psi_n:M\to
\mathbb{R}$ and assume that there exist $k\in \mathbb{N}$,
$0<\alpha\leq 1$, $\epsilon>0$ and a constant
$C_k=C_k(\epsilon,\alpha)>0$ such that
\begin{equation}\label{eq:locally-Holder}
    |\psi_n(x)-\psi_n(y)| \leq C_k \, d(x,y)^\alpha, \quad
    \text{for all $x,y\in M$ with $d(x,y)<\epsilon$ and $n\geq k$.}
\end{equation}
Then any $(\delta,\lambda)$-preball $V^n_k(x)$ of order $n$ for a
point $(k,x)\in \mathcal{M}$ with $0<\delta \leq \epsilon$,
$0<\lambda<1$ there is a constant $K=\exp(C_k\delta^\alpha
(1-\lambda^\alpha)^{-1})>0$ such that
\begin{equation*}
%\label{eq:distor}
     K^{-1} \leq e^{S_n\psi(k,y)-S_n\psi(k,z)} \leq K,
     \quad \text{for all $y,z\in \overline{V_k^n(x)}$}
\end{equation*}
where
$$
S_n \psi = \sum^{n - 1}_{i=0} \psi \circ F^i
$$
denotes the $n$-th Birkhoff sum of a function $\psi:
\mathcal{M}\to \mathbb{R}$ given by $\psi(k,x)=\psi_k(x)$.
\end{lemma}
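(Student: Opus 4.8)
The plan is to telescope the Birkhoff sum difference along the orbit and control each term using the hyperbolic contraction on the preball together with the H\"older hypothesis~\eqref{eq:locally-Holder}. First I would write, for $y,z\in\overline{V^n_k(x)}$,
$$
  S_n\psi(k,y)-S_n\psi(k,z) = \sum_{i=0}^{n-1}\bigl(\psi(F^i(k,y))-\psi(F^i(k,z))\bigr)
  = \sum_{i=0}^{n-1}\bigl(\psi_{k+i}(f^i_k(y))-\psi_{k+i}(f^i_k(z))\bigr),
$$
using the identity $F^i(k,x)=(k+i,f^i_k(x))$ recorded earlier and the definition $\psi(k,x)=\psi_k(x)$. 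Since $k+i\geq k$ for all $i\geq 0$, each $\psi_{k+i}$ satisfies the H\"older estimate with the same constant $C_k$ whenever the two arguments are within distance $\epsilon$.

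Next I would invoke the defining property~\eqref{item2} of a $(\delta,\lambda)$-hyperbolic preball, extended to the closure as noted in Remark~\ref{rem-preballs}: for $i=0,\dots,n-1$,
$$
  d(f^i_k(y),f^i_k(z)) \leq \lambda^{n-i}\, d(f^n_k(y),f^n_k(z)) \leq \lambda^{n-i}\,\delta \leq \delta \leq \epsilon,
$$
where the bound $d(f^n_k(y),f^n_k(z))\leq\delta$ comes from the fact that $f^n_k$ maps $\overline{V^n_k(x)}$ into the closed ball $\overline{B(f^n_k(x),\delta)}$ (property~\eqref{con1} plus Remark~\ref{rem-preballs}), which has diameter at most $2\delta$; in fact one gets the cleaner bound $\lambda^{n-i}\delta$ which is what I actually need. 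Consequently the H\"older hypothesis applies to every pair $(f^i_k(y),f^i_k(z))$, giving
$$
  \bigl|\psi_{k+i}(f^i_k(y))-\psi_{k+i}(f^i_k(z))\bigr| \leq C_k\, d(f^i_k(y),f^i_k(z))^\alpha \leq C_k\,\delta^\alpha\,(\lambda^\alpha)^{n-i}.
$$

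Summing over $i$ and reindexing $j=n-i$ yields
$$
  \bigl|S_n\psi(k,y)-S_n\psi(k,z)\bigr| \leq C_k\,\delta^\alpha\sum_{i=0}^{n-1}(\lambda^\alpha)^{n-i} = C_k\,\delta^\alpha\sum_{j=1}^{n}(\lambda^\alpha)^{j} \leq C_k\,\delta^\alpha\,\frac{\lambda^\alpha}{1-\lambda^\alpha} \leq \frac{C_k\,\delta^\alpha}{1-\lambda^\alpha},
$$
since $0<\lambda<1$ forces $0<\lambda^\alpha<1$ and the geometric series converges. Setting $K=\exp\!\bigl(C_k\delta^\alpha(1-\lambda^\alpha)^{-1}\bigr)$, exponentiating the two-sided bound $-\log K \leq S_n\psi(k,y)-S_n\psi(k,z)\leq \log K$ gives precisely $K^{-1}\leq e^{S_n\psi(k,y)-S_n\psi(k,z)}\leq K$, as claimed. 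There is no serious obstacle here; the only point requiring a little care is justifying that all intermediate points stay within the H\"older scale $\epsilon$, which is exactly where the hypothesis $\delta\leq\epsilon$ is used, and confirming that the contraction estimate~\eqref{item2} is available on the closure $\overline{V^n_k(x)}$ and not merely on the open preball — both are handled by Remark~\ref{rem-preballs}.
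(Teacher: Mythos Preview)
Your proof is correct and follows essentially the same approach as the paper: telescope the Birkhoff sum, use the preball contraction~\eqref{item2} (extended to the closure via Remark~\ref{rem-preballs}) to bring every intermediate pair within the H\"older scale, apply~\eqref{eq:locally-Holder} termwise, and bound the resulting geometric series by $C_k\delta^\alpha(1-\lambda^\alpha)^{-1}$. Your write-up is in fact slightly more careful than the paper's in explicitly noting that $k+i\geq k$ so that the uniform H\"older constant $C_k$ applies at every step.
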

\begin{proof} For any pair of points $y,z\in \overline{V_k^n(x)}$, by
definition of $(\lambda,\delta)$-hyperbolic preball (see also
Remark~\ref{rem-preballs}),
$$
d(f_k^i(y),f_k^i(z)) \leq \lambda^{n-i} d(f^n_k(y),f^n_k(z)) \leq
\lambda^{n-i } \delta \leq \epsilon \quad \text{for all
$i=0,\dots,n-1$}
$$
and thus
\begin{align*}
|S_n\psi(k,y)-S_n\psi(k,z)| &\leq \sum_{i=0}^{n-1} |\psi_{k+i}(f^i_k(y))-\psi_{k+i}(f^i_k(z))| \\
&\leq \sum_{i=0}^{n-1} C_k\, d(f^i_k(y),f^i_k(z))^\alpha \leq
\sum_{i=0}^{n-1} C_k \lambda^{(n-i)\alpha}  \delta^\alpha.
\end{align*}
It is then enough to take $K= \mathrm{exp}(\sum_{j=0}^{\infty} C_k
\lambda^{j \alpha} \delta^\alpha)= \exp(C_k\delta^\alpha
(1-\lambda^\alpha)^{-1})>0$.
\end{proof}

In order to get the bounded distortion property we will need to
suppose that the measure  $m$ is \emph{$f_{1,\infty}$-conformal}. That is,
for each $n\in\mathbb{N}$ we have some function $\psi_n: M \to
\mathbb{R}$ such that
$$
m(f_n(A)) = \int_{A} e^{-\psi_n(x)} \, dm(x), \quad
\text{for every measurable set $A$ so that $f_n|_A$ is injective.}
$$
Surely, any absolutely continuous measure is conformal, by the definition.
Also, there are several examples of conformal measures appearing in the literature  
(see \cite{DU91}, for a large class of examples). 

In fact, the concept of $f_{1,\infty}$-conformal measure allows us to have
varying Jacobians with respect to the dynamics in the sequence.

\begin{proposition} 
\label{prop-distortion} Assume that $m$ is $f_{1,\infty}$-conformal as above
and there exists $k\in \mathbb{N}$, $0<\alpha\leq 1$, $\epsilon>0$
such that the functions $(\psi_n)_n$ satisfy the locally H\"older
condition~\eqref{eq:locally-Holder}.
% there exist $k\in
%\mathbb{N}$, $\epsilon>0$, $0<\alpha\leq 1$ and a constant
%$C_k=C_k(\epsilon,\alpha)>0$ such that
%$$
%    |\psi_n(x)-\psi_n(y)| \leq C_k \, d(x,y)^\alpha \quad
%    \text{for all $x,y\in M$ with $d(x,y)<\epsilon$ and $n\geq k$.}
%$$
Then any $(\delta,\lambda)$-preball of a point $(k,x)\in
\mathcal{M}$ with $0<\delta \leq \epsilon$, $0<\lambda<1$ has
bounded distortion, i.e., satisfies
\eqref{bounded-distortion-inequality} with distortion constant
$K=K(\delta,\lambda,C_k)$ uniform on $x$ and on the order of the
preball.
\end{proposition}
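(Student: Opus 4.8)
The plan is to combine the $f_{1,\infty}$-conformality of $m$ with the distortion estimate of Lemma~\ref{lem:distor}. Fix a $(\delta,\lambda)$-hyperbolic preball $V=V^n_k(x)$ with $0<\delta\le\epsilon$ and $0<\lambda<1$, and write $S_n\psi(k,y)=\sum_{i=0}^{n-1}\psi_{k+i}(f^i_k(y))$ as in the statement of Lemma~\ref{lem:distor}. The core of the proof is the ``chain rule'' identity
\[
   m(f^n_k(A))=\int_A e^{-S_n\psi(k,y)}\,dm(y)\qquad\text{for every Borel set }A\subseteq V .
\]
Granting this, the proposition follows quickly. Assuming, as we may, that $m(A),m(B)>0$, Fubini's theorem gives
\[
   m(B)\,m(f^n_k(A))=\int_A\int_B e^{-S_n\psi(k,y)}\,dm(z)\,dm(y),
\]
while Lemma~\ref{lem:distor} yields $e^{-S_n\psi(k,y)}\le K\,e^{-S_n\psi(k,z)}$ for all $y,z\in\overline V$ with $K=\exp\big(C_k\delta^\alpha(1-\lambda^\alpha)^{-1}\big)$. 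Hence $m(B)\,m(f^n_k(A))\le K\,m(A)\,m(f^n_k(B))$, which is exactly \eqref{bounded-distortion-inequality}, with a constant depending only on $\delta$, $\lambda$ and $C_k$ --- in particular not on $x$ nor on the order $n$.

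To prove the chain rule identity I would induct on the number of iterates $j\le n$, showing $m(f^j_k(A))=\int_A e^{-S_j\psi(k,y)}\,dm(y)$ for all Borel $A\subseteq V$. Two preliminary observations make the induction work. First, by Remark~\ref{rem-preballs} the map $f^n_k$ restricts to a homeomorphism of $\overline V$ onto $\overline{B(f^n_k(x),\delta)}$; since $f^{j+1}_k=f_{k+j}\circ f^j_k$, a downward induction shows that each $f^j_k$ is injective on $\overline V$, hence a homeomorphism onto its compact image, so $f^j_k(A)$ is Borel and $f_{k+j}$ is injective on $f^j_k(V)$. Second, each $\psi_{k+i}$ with $0\le i\le n-1$ has index $\ge k$, so \eqref{eq:locally-Holder} applies and $\psi_{k+i}$ is continuous, hence Borel. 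For the inductive step, conformality of $m$ applied to $f_{k+j}$ on $f^j_k(A)$ gives $m(f^{j+1}_k(A))=\int_{f^j_k(A)}e^{-\psi_{k+j}(z)}\,dm(z)$; the change of variables $z=f^j_k(y)$ converts this into $\int_A e^{-\psi_{k+j}(f^j_k(y))}e^{-S_j\psi(k,y)}\,dm(y)=\int_A e^{-S_{j+1}\psi(k,y)}\,dm(y)$. The change-of-variables formula itself is obtained from the inductive hypothesis: for Borel $F\subseteq f^j_k(A)$ one has $m(F)=m(f^j_k(A'))=\int_A\mathbf 1_F(f^j_k(y))\,e^{-S_j\psi(k,y)}\,dm(y)$ where $A'=(f^j_k)^{-1}(F)\subseteq A$, and the general case follows by approximating $e^{-\psi_{k+j}}$ from below by simple functions and applying monotone convergence.

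The step I expect to be the main obstacle --- purely measure-theoretic, but the only genuinely delicate point --- is precisely this iteration of the conformality relation: that identity is postulated for a single map on a set on which it is injective, and one must verify that it composes correctly along the orbit inside the preball. This is where the injectivity of all the intermediate maps $f^j_k$ on $\overline V$ (coming from Remark~\ref{rem-preballs}) and the Borel measurability of the intermediate images $f^j_k(A)$ are used. Once $m(f^n_k(A))=\int_A e^{-S_n\psi(k,y)}\,dm(y)$ is established, no further estimates are required and the uniformity of the distortion constant $K=K(\delta,\lambda,C_k)$ is inherited directly from Lemma~\ref{lem:distor}.
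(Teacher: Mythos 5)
Your proposal is correct and follows essentially the same route as the paper: both establish the chain-rule identity $m(f^n_k(A))=\int_A e^{-S_n\psi(k,y)}\,dm(y)$ from $f_{1,\infty}$-conformality and then invoke Lemma~\ref{lem:distor} to control the integrand up to a factor $K$; your Fubini phrasing is a trivial rewording of the paper's $\sup/\inf$ estimate. The one genuine contribution is that you spell out the induction behind the chain-rule identity (injectivity of the intermediate $f^j_k$ on $\overline V$, Borel measurability of the images, change of variables via simple functions), which the paper dismisses as ``not hard to see.''
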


\begin{proof}
We consider $0<\delta\leq \epsilon$, $0<\lambda<1$ and a
$(\delta,\lambda)$-hyperbolic preball $V_k^n(x)$ of order $n$ for
a point $(k,x)\in \mathcal{M}$.
%First we  will prove that there is a constant $K=K(\delta,\lambda,k)>0$ such that
%\begin{equation}
%\label{eq:distor}
%     K^{-1} \leq e^{S_n\psi(k,y)-S_n\psi(k,z)} \leq K    \quad \text{for all $y,z\in V_k^n(x)$}
%\end{equation}
%where
%$$
%S_n \psi = \sum^{n - 1}_{i=0} \psi \circ F^i
%$$
%denotes the $n$-th Birkhoff sum of a function $\psi: \mathcal{M}\to \mathbb{R}$ given by $\psi(k,x)=\psi_k(x)$.
%Indeed, for any pair of points $y,z\in V_k^n(x)$, by definiton of $(\lambda,\delta)$-hyperbolic preball,
%$$
%d(f_k^i(y),f_k^i(z)) \leq \lambda^{n-i} d(f^n_k(y),f^n_k(z)) \leq \lambda^{n-i } \delta \leq \epsilon \quad
%\text{for all $i=0,\dots,n-1$}
%$$
%and thus
%\begin{align*}
%|S_n\psi(k,y)-S_n\psi(k,z)| &\leq \sum_{i=0}^{n-1} |\psi_{k+i}(f^i_k(y))-\psi_{k+i}(f^i_k(z))| \\
%&\leq \sum_{i=0}^{n-1} C_k\, d(f^i_k(y),f^i_k(z))^\alpha \leq \sum_{i=0}^{n-1} C_k \lambda^{(n-i)\alpha}  \delta^\alpha.
%\end{align*}
%It is then enough to take $K= \mathrm{exp}(\sum_{j=0}^{\infty} C_k \lambda^{j \alpha} \delta^\alpha)=
%\exp(C_k\delta^\alpha (1-\lambda^\alpha)^{-1})>0$.
%Now,
Let $A, B$ be a pair of measurable sets in $V_k^n(x)$. By the
conformality of the measure, it is not hard to see that
$$
    m(f_k^n(A))= \int_A e^{-S_n\psi(k,z)} dm(z) \leq  \sup_{z\in A} e^{-S_n\psi(k,z)} \, m(A)
$$
and
$$
    m(f_k^n(B))= \int_B e^{-S_n\psi(k,y)} dm(y) \geq  \inf_{y\in B} e^{-S_n\psi(k,z)} \, m(B)
$$
where $S_n\psi$ denotes the $n$-th Birkhoff sum of a function
$\psi: \mathcal{M}\to \mathbb{R}$ given by $\psi(k,x)=\psi_k(x)$.
From this and Lemma~\ref{lem:distor} one easily concludes the
proposition.
\end{proof}

%\begin{remark} Recall that from the begining, $m$ is non-singular for $F$. However we want to point out that in fact this %property can be easily obtained as a consequence of the $F$-conformlity assumption of $m$.
%\end{remark}

\subsection{Hyperbolic times} Now, we will provide a sufficient
condition to get a hyperbolic preball. In order to do this, we
first need to restrict the class of non-autonomous discrete
dynamical systems $f_{1,\infty}=(f_n)_n$ that we are considering.

%In the sequel, we will assume that $M$ is locally geodesic.
%Recall that a metric space is \emph{locally geodesic} or (locally $1$-quasiconvex) if each point has a neighborhood $U$ such %that for each pair of points $x, y \in U$, there exists a rectificable curve $\gamma$
%joining $x$ and $y$  with length $\ell(\gamma)=d(x,y)$.
We suppose that $f_n: M\to M$ for all $n\in\mathbb{N}$ are
\emph{local homeomorphisms with uniform Lipschitz constant for the
inverse branches}. This means that there is a function
$\varphi:\mathcal{M}\to \mathbb{R}$ such that for each $(k,x) \in
\mathcal{M}$ there exists a neighborhood $V$ of $x$ so that $f_k:V
\to f_k(V)$ is invertible and
\begin{equation*}
\label{eq:uniform-lipschitz} d(y,z) \leq \varphi(k,x) \,
d(f_k({y}),f_k(z)), \quad \text{for all for every $y,z \in V$}.
\end{equation*}
%
%{\color{blue}Today I was talking with Paulo Varandas about his paper with Marcelo Viana (Existence, uniqueness and stability of equilibrium states for non-uniformly expanding maps). He understands the problem that I see about the metric space will need to be geodesic. He explains me that in his article they assume that map $f$ has uniform injectivity radius (hypothesis (H2) in the paper). In other words, they asume that the above neighborhood $f(V)$ of $f(x)$ has uniform size independient of $x$. This allows to get the the uniform $\delta_1>0$ in the following propostion.
%He says me that this uniformity assumption allows to applied the result for shift (Cantor sets which are totally disconnected).
%Ours assumption that the metric space is locally geodesic implies some kind of connectedness.  Let me first to explain you the classical argument using a geodesic space and after that I will explain you how Paulo said me that the assumption that they had in mind (but not clearly writting in the article with Marcelo) works to prove the same result. }

\begin{definition}
Let $0<\sigma<1$. A positive integer $n\in\mathbb{N}$ is called
\emph{$\sigma$-hyperbolic time} of $f_{1,\infty}$ for the point
$(k,x)\in \mathcal{M}$ if
$$
   \prod_{i=n-\ell}^{n-1} \varphi(F^i(k,x)) \leq \sigma^\ell, \quad \text{for
   $\ell=1,\dots,n$} \ \ \text{where} \ \
 F^i(k,x)=(k+i,f_k^i(x)).
$$
%where we recall that  for all $i\geq 0$.
\end{definition}

The following proposition shows that existence of hyperbolic times
implies the existence of hyperbolic preballs.

\begin{proposition}
\label{prop-delta} For any $\epsilon>0$ there is $0<\delta_k\leq
\epsilon$ such that if $n\in\mathbb{N}$ is a $\sigma$-hyperbolic
time of $f_{1,\infty}$ for a point $(k,x)\in \mathcal{M}$ then
$(k,x)$ has a $(\delta_k,\lambda)$-hyperbolic preball of order $n$
where $\lambda=\sigma$.
\end{proposition}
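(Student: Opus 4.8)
The plan is to construct the preball $V^n_k(x)$ by pulling the ball $B(f^n_k(x),\delta_k)$ back along the finite orbit $f^{n-1}_k(x),f^{n-2}_k(x),\dots,f^1_k(x),x$, one inverse branch at a time. The point of the $\sigma$-hyperbolic time inequality is precisely that it forces the successive preimages to shrink geometrically, so that at each step the nested preimage still lies in the region where the next inverse branch is defined, and so that the final bound \eqref{item2} comes out with contraction rate $\lambda=\sigma$.

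First I would fix the radius. Using the compactness of $M$ together with the standing assumption that the $f_n$ are local homeomorphisms with uniform Lipschitz constant for the inverse branches, I would produce $0<\delta_k\le\epsilon$ with the following property: for every $m\ge k$ and every $p\in M$ there is an open neighborhood $U$ of $p$ such that $f_m$ maps $U$ homeomorphically onto the ball $B(f_m(p),\delta_k)$ and the inverse map $(f_m|_U)^{-1}\colon B(f_m(p),\delta_k)\to U$ is $\varphi(m,p)$-Lipschitz. This uniformity in the base point $p$ and in the index $m\ge k$ is exactly where the word ``uniform'' in the standing hypothesis is used, and it is the only genuinely delicate ingredient; the rest of the argument is bookkeeping.

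Now suppose $n$ is a $\sigma$-hyperbolic time for $(k,x)$. Put $\rho_j=\prod_{i=n-j}^{n-1}\varphi(F^i(k,x))$ for $j=0,\dots,n$, so that $\rho_0=1$ and, by the definition of $\sigma$-hyperbolic time, $\rho_j\le\sigma^{j}$ for $j\ge 1$. I claim, by induction on $j$, that there is an open neighborhood $V^{(j)}$ of $f^{n-j}_k(x)$ such that $f^{j}_{k+n-j}$ maps $V^{(j)}$ homeomorphically onto $B(f^n_k(x),\delta_k)$ and $V^{(j)}\subset B(f^{n-j}_k(x),\rho_j\delta_k)$. Take $V^{(0)}=B(f^n_k(x),\delta_k)$. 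For the inductive step, since $\rho_j\delta_k\le\sigma^{j}\delta_k\le\delta_k$ we have $V^{(j)}\subset B(f^{n-j}_k(x),\delta_k)$; applying the previous paragraph at the point $F^{n-j-1}(k,x)$, whose image under $f_{k+n-j-1}$ is $f^{n-j}_k(x)$, gives an inverse branch $g$ of $f_{k+n-j-1}$ defined on all of $B(f^{n-j}_k(x),\delta_k)$, with $g\big(f^{n-j}_k(x)\big)=f^{n-j-1}_k(x)$ and $g$ Lipschitz of constant $\varphi(F^{n-j-1}(k,x))$. Setting $V^{(j+1)}=g(V^{(j)})$, the map $f_{k+n-j-1}$ carries $V^{(j+1)}$ homeomorphically onto $V^{(j)}$, hence $f^{j+1}_{k+n-j-1}$ carries it onto $B(f^n_k(x),\delta_k)$, and the Lipschitz bound gives $V^{(j+1)}\subset B\big(f^{n-j-1}_k(x),\varphi(F^{n-j-1}(k,x))\rho_j\delta_k\big)=B(f^{n-j-1}_k(x),\rho_{j+1}\delta_k)$. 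Taking $V^n_k(x)=V^{(n)}$, this is a neighborhood of $x=f^0_k(x)$ mapped homeomorphically by $f^n_k$ onto $B(f^n_k(x),\delta_k)$, which is condition~\eqref{con1} with $\delta=\delta_k$.

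It remains to check~\eqref{item2} with $\lambda=\sigma$. By the construction, $f^i_k\big(V^n_k(x)\big)=V^{(n-i)}\subset B(f^i_k(x),\delta_k)$ for $i=0,\dots,n$, so for any $y,z\in V^n_k(x)$ both $f^i_k(y)$ and $f^i_k(z)$ lie in the domain $B(f^{i+1}_k(x),\delta_k)$... more precisely, both $f^{i+1}_k(y),f^{i+1}_k(z)$ lie in that domain, and applying the inverse branch of $f_{k+i}$ near $f^i_k(x)$ yields $d(f^i_k(y),f^i_k(z))\le\varphi(F^i(k,x))\,d(f^{i+1}_k(y),f^{i+1}_k(z))$. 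Iterating from $i$ up to $n-1$ and invoking the $\sigma$-hyperbolic time inequality with $\ell=n-i$,
\[
 d\big(f^i_k(y),f^i_k(z)\big)\le\Big(\prod_{m=i}^{n-1}\varphi(F^m(k,x))\Big)\,d\big(f^n_k(y),f^n_k(z)\big)\le\sigma^{\,n-i}\,d\big(f^n_k(y),f^n_k(z)\big),
\]
which is precisely~\eqref{item2}. Hence $(k,x)$ has a $(\delta_k,\sigma)$-hyperbolic preball of order $n$, and the main obstacle was, as noted, the extraction of the single radius $\delta_k$ valid uniformly over all base points and all indices $m\ge k$.
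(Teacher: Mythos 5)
Your proof is essentially the same as the paper's, just unrolled: you pull the ball $B(f^n_k(x),\delta_k)$ back through the inverse branches of $f_{k+n-1},\dots,f_k$ one step at a time, the hyperbolic-time inequality guaranteeing that each successive preimage stays within the domain of the next inverse branch; the paper packages exactly this as an induction on $n$, reducing a $\sigma$-hyperbolic time of order $n+1$ at $(k,x)$ to one of order $n$ at $(k+1,f_k(x))$.

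One point deserves a flag, though it afflicts the paper's own write-up as much as yours. You assert that compactness of $M$ together with hypothesis~\ref{H2} yields a single $\delta_k$ such that \emph{every} $f_m$ with $m\ge k$ admits an inverse branch on all balls of radius $\delta_k$. Compactness of $M$ only gives a uniform injectivity radius for each fixed map $f_m$; it says nothing about the infimum of these radii over $m\ge k$, which can tend to zero under~\ref{H1}--\ref{H3} alone. The paper does not claim such uniformity either: it fixes $\delta_m$ per map and arranges $\delta_m\ge\delta_{m+1}$, but then its inductive step, applied at $(k+1,f_k(x))$, actually delivers a $(\delta_{k+n-1},\sigma)$-preball of order $n$ at $(k,x)$, not a $(\delta_k,\sigma)$-preball, so the radius silently depends on $n$. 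Thus both arguments tacitly require $\inf_{m\ge k}\delta_m>0$; this holds in the paper's intended applications, where $\{f_m\}$ lies in a compact family of local diffeomorphisms (cf.\ Remark~\ref{example}), but it is worth stating as an extra hypothesis if you want Proposition~\ref{prop-delta} to hold literally as written in the abstract setting.

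Aside from that shared imprecision about where the uniform $\delta_k$ comes from, your inductive construction of the nested sets $V^{(j)}$ and the verification of~\eqref{item2} with $\lambda=\sigma$ are correct and match the paper's argument.
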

\begin{proof}
First of all we will set $\delta_k>0$. To do this we fix
$\epsilon>0$. For each $k\in \mathbb{N}$, since $f_k$ is a local
homeomorphism, for every $x \in M$ there is
$0<\delta_{k,x}\leq\epsilon$ such that $f_k$ sends a neighborhood
$U(k,x)$ of $x$ homeomorphically onto an open ball of radius
$\delta_{k,x}$ centered at $f_k(x)$ and satisfying
\begin{equation}
\label{eq:uniform}
    d(y,z)\leq \varphi(k,x)\, d(f_k(y),f_k(z)) \quad \text{for all $y,z\in U(k,x)$}.
\end{equation}
By compactness of $M$, we can choose a uniform radius
$\delta_k>0$. Otherwise we find a sequence of points $x_n\in M$
converging to a point $\bar{x}$ and with $\delta_{k,x_n}\to 0$.
Hence, we obtain that $\delta_{k,\bar{x}}$ must to be zero
obtaining a contradiction. Thus we get that $f_k:U(k,x)\to
B(f_k(x),\delta_k)$ is a homeomorphism
satisfying~\eqref{eq:uniform}. Moreover, without loss of
generality, using the order of $\mathbb{N}$, we can assume that
$\delta_k\geq \delta_{k+1}$ for all $k\in \mathbb{N}$.

Now we will show the proposition by induction on $n$. Let $n=1$ be
a $\sigma$-hyperbolic time of a point $(k,x)$. This implies that
$\varphi(k,x)\leq \sigma$. Let $V^1_k(x)$ be the neighborhood
$U(k,x)$ of $x$ obtained  above. Hence we have that $f_k$ sends
homeomorphically $V_k^1(x)$ onto the open ball
$B(f_k(x),\delta_k)$ and
$$
    d(y,z)\leq \varphi(k,x) \, d(f_k(y),f_k(z)) \leq \sigma \, d(f_k(y),f_k(z)), \quad
    \text{for all $y,z\in V^1_k(x)$.}
$$
Thus, $V^1_k(x)$ is a $(\delta_k,\sigma)$-hyperbolic preball of
order $n=1$ at the point $(k,x)$.

Now, assuming the proposition holds for $n$, we prove it for
$n+1$. Namely, we assume that if $n$ is a $\sigma$-hyperbolic time
of a point $(k,x)$, there exists a $(\delta_k,\sigma)$-hyperbolic
preball $V^{n}_k(x)$ and additionally it holds that
$$
    f^{i}_k(V^{n}_k(x)) \subset U(F^i(k,x)),
    \quad \text{for $i=0,\dots,n-1$}.
$$

Let $n+1$ be a $\sigma$-hyperbolic time of a point $(k,x)$. Hence,
$$
     \prod_{i=n-\ell}^{n-1} \varphi(F^{i}(F(k,x))) = \prod_{j=n+1-\ell}^{n} \varphi(F^{j}(k,x)) \leq \sigma^\ell,
     \quad \text{for $\ell=1,\dots,n$}
$$
and thus $n$ is a $\sigma$-hyperbolic time of the point
$F(k,x)=(k+1,f_k(x))$. By induction, there exists a
$(\delta_{k+1},\sigma)$-hyperbolic preball $V$ of order $n$ at the
point $F(k,x)$. This means that $f^n_{k+1}$ sends homeomorphically
$V$ onto $B(f_{k+1}^{n}(f_k(x)),\delta_{k+1})$ and
\begin{equation}
\label{hip:induction}
   d(f^i_{k+1}(\bar y), f^i_{k+1}(\bar z)) \leq \sigma^{n-i}
   d(f^n_{k+1}(\bar y), f^n_{k+1}(\bar z)), \quad \text{for all $\bar y, \bar z \in V$, $i=0,\dots,n-1$.}
\end{equation}
Notice that, in fact, $V\subset B(f_k(x),\delta_k)$ since applying
the above inequality for $i=0$ and recalling that $\delta_k\geq
\delta_{k+1}$, we have that
$$
   d(\bar y, f_k(x)) \leq \sigma^{n}
   d(f^n_{k+1}(\bar y), f^{n}_{k+1}(f_k(x)))
   \leq \sigma^{n}\delta_{k+1}<\delta_k.
$$
Therefore, there is a neighborhood $V^{n+1}_k(x)$ of $x$ which is
sent homeomorphically by $f_k$ onto~$V$. Moreover,
$V^{n+1}_k(x)\subset U(k,x)$. On the other hand, by the induction
hypothesis, we have also that
$$
    f^{i}_k(V^{n+1}_k(x))=f^{i-1}_{k+1}(V) \subset U(F^{i-1}(F(k,x)))=U(F^i(k,x))  \quad \text{for $i=1,\dots,n$}.
$$
Now, we must show that for every $y,z\in V^{n+1}_k(x)$ it holds
that
\begin{equation}
\label{eq:check}
  d(f^j_{k}(y), f^j_{k}(z)) \leq \sigma^{n+1-j}
   d(f^{n+1}_{k}(y), f^{n+1}_k(z)), \quad \text{for all $j=0,\dots,n$.}
\end{equation}
Applying~\eqref{hip:induction} we obtain~\eqref{eq:check} for
$j=1,\dots,n$. Thus, it is enough to check it for $j=0$.
This follows applying recursively
$$
  d(y,z)\leq  \varphi(k,x) d(f_{k}(y), f_{k}(z))
  \leq \dots \leq \prod_{i=0}^{n}  \varphi(F^i(k,x))
  d(f^{n+1}_{k}(y), f^{n+1}_{k}(z))
$$
for any $y,z\in V^{n+1}_k(x)$. Since $n+1$ is a
$\sigma$-hyperbolic time of $(k,x)$ we complete the proof.
\end{proof}
\subsection{Expanding/expandable measures}  In this subsection, we study how to get hyperbolic times.
We will continue assuming that $f_{1,\infty}=(f_n)_n$ is a
non-autonomous discrete system of local homeomorphisms $f_n$ with
uniform Lipschitz constant $\varphi(n,x)=\varphi_n(x)$ for the
inverse branches as in the previous section. Additionally we
assume that
$$
      \sup\{-\log \varphi(k,x): x\in M, k\in \mathbb{N}\} < \infty.
$$
For $k\in\mathbb{N}$ and $a>0$, let $M(k,a)$ be the set of
points $x\in M$ such that
$$
    \limsup_{n\to \infty} \frac{1}{n}\sum_{i=0}^{n-1} \log \varphi (F^i(k,x)) <-a.
$$
\begin{proposition} \label{prop-infinite-hyperbolic-times} If $x\in M(k,a)$ then there is $\sigma=\exp(-a/2)$ such that  $(k,x)$ has
infinitely many $\sigma$-hyperbolic times.% for all $x\in M(k,a)$.
%Then there exists $0<\sigma=\sigma(a)<1$ such that for every $x\in M(a)$ the point
%$(1,x)$ has infinilely many $\sigma$-hyperbolic times.
\end{proposition}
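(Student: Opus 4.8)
The plan is to show that the condition $x \in M(k,a)$, which says that the averages $\frac{1}{n}\sum_{i=0}^{n-1}\log\varphi(F^i(k,x))$ are eventually bounded above by $-a$, forces infinitely many indices $n$ at which \emph{all} the tail products $\prod_{i=n-\ell}^{n-1}\varphi(F^i(k,x))$ are bounded by $\sigma^\ell = e^{-a\ell/2}$. This is precisely the classical ``hyperbolic times lemma'' of Pliss, adapted to the non-autonomous skew-product setting; since $\log\varphi$ is bounded (by the standing assumption $\sup\{-\log\varphi(k,x)\} < \infty$, together with the Lipschitz hypothesis giving a lower bound on $\varphi$), the hypotheses of the Pliss lemma are met.

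Concretely, I would set $a_i = -\log\varphi(F^i(k,x))$, so that $x \in M(k,a)$ gives $\liminf_{n\to\infty}\frac{1}{n}\sum_{i=0}^{n-1} a_i > a$, and the boundedness assumption gives $a_i \le A$ for some uniform constant $A < \infty$ (and $a_i \ge -B$ for some $B$, from the local homeomorphism/Lipschitz structure). Fix $c = a/2$, which lies strictly between $0$ and the asymptotic average $a$. The Pliss lemma then yields infinitely many $n$ such that
$$
  \sum_{i=j}^{n-1} a_i \ge c\,(n-j), \qquad \text{for all } j = 0,1,\dots,n-1.
$$
Exponentiating, $\prod_{i=j}^{n-1}\varphi(F^i(k,x)) = \exp\bigl(-\sum_{i=j}^{n-1} a_i\bigr) \le e^{-c(n-j)} = \sigma^{n-j}$; writing $\ell = n-j$ this is exactly the defining inequality of a $\sigma$-hyperbolic time with $\sigma = e^{-a/2}$, for every $\ell = 1,\dots,n$.

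For completeness I would include a short proof of the Pliss lemma itself rather than merely citing it: consider the function $n \mapsto S_n = \sum_{i=0}^{n-1}(a_i - c)$; since the Cesàro limit of $a_i$ exceeds $a > c$, we have $S_n \to +\infty$. Call $n$ a ``good'' time if $S_n \ge S_j$ for all $j < n$ (a record for the partial sums); there are infinitely many good times because $S_n\to\infty$, and at a good time one has $S_n - S_j \ge 0$, i.e. $\sum_{i=j}^{n-1}(a_i - c) \ge 0$, which is the required inequality. (The uniform upper bound $a_i \le A$ is what one uses in the more refined versions to control the \emph{density} of good times; here we only need infinitely many, so even that is not strictly necessary, though stating it does no harm.)

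**The main obstacle** is essentially bookkeeping: one must be careful that the ``hyperbolic time'' condition as defined in the paper demands the tail estimate $\prod_{i=n-\ell}^{n-1}\varphi \le \sigma^\ell$ for \emph{all} $\ell$ from $1$ to $n$ simultaneously — not just for $\ell = n$ — so it is the ``record/good time'' structure of the partial sums, and not merely the Cesàro convergence, that does the work. One should also double-check the index alignment between $F^i(k,x)$, the Birkhoff-type sum in $M(k,a)$, and the product in the definition of $\sigma$-hyperbolic time; these match on the nose once one notes $\varphi(F^i(k,x)) = \varphi_{k+i}(f_k^i(x))$. With these matches in place, the exponentiation step is immediate and the proof is complete.
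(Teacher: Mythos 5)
Your proof is correct and follows the same basic route as the paper: both reduce the claim to a Pliss-type selection of times $n$ at which every backward partial sum $\sum_{i=n-\ell}^{n-1}\bigl(-\log\varphi(F^i(k,x))\bigr)$ exceeds $c\ell$ with $c=a/2$, after which exponentiation gives the defining inequality for a $\sigma$-hyperbolic time with $\sigma = e^{-a/2}$. The one genuine difference is how the Pliss-type statement is obtained. The paper invokes the full Pliss lemma (cited from \cite{AV13}), which requires the uniform bound $A=\sup\{-\log\varphi-a/2\}<\infty$ and yields the stronger conclusion that the number of hyperbolic times up to $N$ is at least $\theta N$ with $\theta=c/A$; letting $N\to\infty$ then gives infinitely many. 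You instead prove only what is needed, by the elementary record argument on $S_n=\sum_{i<n}(a_i-c)$: since $\liminf S_n/n \ge a-c = a/2 > 0$, we have $S_n\to\infty$, hence infinitely many $n$ with $S_n\ge S_j$ for all $j<n$, and each such $n$ is a $\sigma$-hyperbolic time. As you correctly observe, this drops the boundedness hypothesis, which is only used for the quantitative (positive density) version. The trade-off is that the paper's density statement would be needed in finer applications (e.g.\ constructing a.c.i.m.'s), but for the ``infinitely many'' conclusion required here your version is both sufficient and simpler.
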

\begin{proof}
For every $x \in M(k,a)$ and $N$ sufficiently large
we have
$$
    \sum_{i=0}^{N-1} -\log \varphi (F^i(k,x)) \geq Na.
$$
Taking $a_i = -\log \varphi(F^i(k,x))- a/2$,  we have  $a_0 + \dots + a_{N-1} \geq a N /2$.
%$$
%     \sum_{i=0}^{N-1} a_i \geq \frac{a}{2} N.
%$$
By Pliss  lemma (c.f.~\cite[Lemma~4.2]{AV13}) with
$$
   c = a/2 \quad \text{and} \quad A = \sup\{-\log \varphi(i,z)-a/2 : z \in M, i\in \mathbb{N}\}
   <\infty,
$$
there are $t \geq \theta N$, $\theta = c/A$ and  $1 \leq n_1 < \dots < n_\ell \leq N$ such that
$$
     \sum_{i=n}^{n_j-1} a_i \geq 0,  \quad \text{for $n=0,\dots,n_j-1$ and $j=1,\dots,t$}.
$$
Therefore,
$$
 \sum_{i=n}^{n_j-1} \log \varphi(F^i(k,x)) \leq \frac{a}{2} (n_j-n), \quad \text{for $n=0,\dots,n_j-1$ and $j=1,\dots,t$}.
$$
By taking $0<\sigma=\exp(-a/2)<1$ and $\ell=n_j-n$, we get
$$
   \prod_{i=n_j-\ell}^{n_j-1} \varphi(F^i(k,x)) \leq \sigma^\ell \quad \text{for $\ell=0,\dots,n_j-1$ and $j=1,\dots,t$}.
$$
This implies that $n_j$ for $j=1,\dots,t$ are $\sigma$-hyperbolic times of $F$ for $(k,x)$. Since $t\to \infty$
as $N\to \infty$ we obtain infinitely many hyperbolic times and complete the proof.
\end{proof}
Following \cite{V11}, we say that a measure $m$ is
\emph{$f_{1,\infty}$-expanding} if there is $k\in \mathbb{N}$ so
that
$$
    \limsup_{n\to \infty} \frac{1}{n}\sum_{i=0}^{n-1} \log \varphi (F^i(k,x)) <0,
    \quad \text{for $m$-almost every $x\in M$}.
$$
Observe that equivalently, one can ask that the above limit holds
at $(1,x)$, for $m$-almost every $x\in M$. If the limit is
uniformly far away from zero, as in the above proposition, i.e.,
if there is $a>0$ such that $m(M(1,a))=1$, we say that $m$ is
\emph{strong $f_{1,\infty}$-expanding}.

Similarly,  we will say that $m$ is
\emph{$f_{1,\infty}$-expandable} if for $m$-almost every $x\in M$
there is $k=k(x)\in \mathbb{N}$ such that
$$
    \limsup_{n\to \infty} \frac{1}{n}\sum_{i=0}^{n-1} \log \varphi (F^i(k,x)) <0.
$$
In addition, if there is $a>0$ uniform on $x$ so that for
$m$-almost every $x\in M$ there is $k=k(x)\in \mathbb{N}$ such
that $x \in M(k,a)$, then we say that $m$ is \emph{strong
$f_{1,\infty}$-expandable}.

As a consequence of the above proposition, we have the following:

\begin{corollary}     It holds that,
\label{cor-expanding}
%Assembling previous proposition and discussion above % Under the above assumption on $f_{1,\infty}$
%one gets that
\begin{enumerate}
\item if $m$ is $f_{1,\infty}$-expanding (resp.~strong $f_{1,\infty}$-expanding) then for $m$-almost every $x\in M$
there exists $0<\sigma=\sigma(x)<1$ (resp.~$0<\sigma<1$ uniform on
$x$) such that the point $(1,x)\in \mathcal{M}$ has infinitely
many $\sigma$-hyperbolic times.
\item if $m$ is $f_{1,\infty}$-expandable (resp.~strong $f_{1,\infty}$-expandable) then for $m$-almost every $x\in M$
there exist $k=k(x)\in\mathbb{N}$ and $0<\sigma=\sigma(x)<1$
(resp.~$0<\sigma<1$ uniform on $x$) such that $(k,x)$ has
infinitely many $\sigma$-hyperbolic times.
\end{enumerate}
\end{corollary}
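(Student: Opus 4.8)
The plan is to obtain both items as essentially immediate consequences of Proposition~\ref{prop-infinite-hyperbolic-times}, via a routine countable exhaustion; there is no genuine difficulty, so I will mainly lay out the bookkeeping.

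First I would record, for each fixed $k\in\mathbb{N}$, the identity
$$
\Big\{x\in M:\ \limsup_{n\to\infty}\tfrac1n\textstyle\sum_{i=0}^{n-1}\log\varphi(F^i(k,x))<0\Big\}=\bigcup_{j\in\mathbb{N}}M(k,1/j),
$$
the union being increasing in $j$. For item~(1) I would first reduce to the base fibre $k=1$: by the observation stated just after the definition of $f_{1,\infty}$-expanding, the expanding hypothesis for some $k$ is equivalent to the same statement with $k=1$. The reason is that $F^{k-1}(1,x)=(k,f_1^{k-1}(x))$, so by the non-singularity of $m$ the relevant full-measure set pulls back along $f_1^{k-1}$, and deleting the finitely many finite terms $\log\varphi(F^i(1,x))$, $0\le i\le k-2$, leaves the Ces\`aro $\limsup$ strictly negative. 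Hence the set displayed above with $k=1$ has full $m$-measure, so for $m$-a.e.\ $x$ there is $j=j(x)$ with $x\in M(1,1/j(x))$, and Proposition~\ref{prop-infinite-hyperbolic-times} applied with $a=1/j(x)$ shows that $(1,x)$ has infinitely many $\sigma(x)$-hyperbolic times with $\sigma(x)=\exp(-1/(2j(x)))\in(0,1)$. In the strong case one has $m(M(1,a))=1$ for a single $a>0$ by definition, and the same proposition applied with this fixed $a$ gives, for $m$-a.e.\ $x$, infinitely many $\sigma$-hyperbolic times of $(1,x)$ with $\sigma=\exp(-a/2)$ now independent of $x$.

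For item~(2) no fibre reduction is needed. If $m$ is $f_{1,\infty}$-expandable, then for $m$-a.e.\ $x$ there are $k=k(x)$ and, by the displayed identity, $j=j(x)$ with $x\in M(k(x),1/j(x))$; Proposition~\ref{prop-infinite-hyperbolic-times} then yields infinitely many $\sigma(x)$-hyperbolic times of $(k(x),x)$ with $\sigma(x)=\exp(-1/(2j(x)))$. If $m$ is strong $f_{1,\infty}$-expandable there is a uniform $a>0$ such that $m$-a.e.\ $x$ lies in $M(k(x),a)$ for some $k(x)$, and the proposition produces infinitely many $\sigma$-hyperbolic times of $(k(x),x)$ with the uniform $\sigma=\exp(-a/2)$. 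The only step that is not purely formal is the fibre reduction in item~(1) --- checking that moving the base from $k$ to $1$ perturbs the Birkhoff averages only by a bounded amount that disappears in the limit --- and that is precisely where the non-singularity of $m$ enters.
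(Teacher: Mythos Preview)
Your proposal is correct and is exactly what the paper intends: the corollary is stated without proof, as an immediate consequence of Proposition~\ref{prop-infinite-hyperbolic-times}, and your countable exhaustion $\bigcup_{j}M(k,1/j)$ together with an application of that proposition is the natural unpacking. The fibre reduction from a general $k$ to $k=1$ that you justify via non-singularity is precisely the equivalence the paper already records immediately after the definition of $f_{1,\infty}$-expanding, so you are not adding a new ingredient there either.
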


\subsection{Locally geodesic metric spaces}
A metric space is said to be \emph{locally geodesic} (or locally
$1$-quasiconvex) if each point has a neighborhood $U$ such that
for each pair of points $x, y \in U$, there is a rectifiable curve
$\gamma$ joining $x$ and $y$  with length $\ell(\gamma)=d(x,y)$.
In this subsection, we assume that $(M,d)$ is locally geodesic and
we show how expanding/expandable measures and regular hyperbolic
preballs can be obtained in this case.

\subsubsection{Expanding/expandable measures}
\label{sec:local-geodesic-expanding} In the two previous
subsection, we assumed that the non-autonomous system
$f_{1,\infty}=(f_n)_n$, formed by local homeomorphisms $f_n$ have
\emph{uniform} Lipschitz constant $\varphi(n,x)=\varphi_n(x)$ for
the inverse branches. That is,
satisfying~\eqref{eq:uniform-lipschitz}. An a priori weaker
condition is to assume that maps $f_n$ have \emph{pointwise}
Lipschitz constants $\theta(n,x)=\theta_n(x)>0$ for the inverse
branches. That is, there is a positive bounded functions
$\theta_n: M\to \mathbb{R}$ such that for each $x \in M$ it holds
$$
  \theta_n(x)^{-1}=\liminf_{y\to x}
  \frac{d(f_n(x),f_n(y))}{d(x,y)}.
$$
%Observe that this implies that there exist a neighborhood $V$ of
%$x$ and a constant $K>0$ so that $f_n:V \to f_n(V)$ is invertible
%and
%\begin{equation*}
%\label{eq:pointwise-lipschitz} d(y,x) \leq K\theta_n(x) \,
%d(f_n({y}),f_n(x)) \quad \text{for all for every $y\in V$}.
%\end{equation*}
%Moreover, $K\to 0$ as $V\to \{x\}$.
According to~\cite[Cor.~2.4]{DJ10}, any pointwise Lipschitz map on
a locally geodesic metric space $(M,d)$ is uniformly Lipschitz.
Moreover, by \cite[Lemma~2.3]{DJ10}, restricting $f_n$ to a small
neighborhood $V$ of $x$, one gets
\begin{align*}
d(y,z) \leq \|\theta_n\|^{}_{\infty,V} \, d(f_n(y),f_n(z)), \quad
\text{for all $y,z\in V$.}
\end{align*}
Thus we can take $\varphi(n,x)= \|\theta_n\|^{}_{\infty,V}$. In
addiction, if $\theta:\mathcal{M}\to \mathbb{R}$,
given by $\theta(n,x)=\theta_n(x)$ is a continuous function (with
the discrete topology in $\mathbb{N}$) or equivalently,
$\theta_n: M \to \mathbb{R}$ is a continuous map, for all $n\in
\mathbb{N}$ then one can get also an upper estimative. Indeed, since
$\theta_n(x)<\sigma^{-1/2}\theta_n(x)$, for all $0<\sigma<1$, by
the continuity, one can find a small neighborhood $V=V(\sigma)$ of
$x$ such that
\begin{equation*}
    \varphi(n,x)=\|\theta_n\|^{}_{\infty,V}
    \leq \sigma^{-1/2}\theta_n(x)=\sigma^{-1/2}\theta(n,x).
\end{equation*}
Hence,
\begin{align*} \label{eq:borra}
    \limsup_{n\to \infty} \frac{1}{n}\sum_{i=0}^{n-1}  \log
    \theta(F^i(k,x))
     &\leq \limsup_{n\to \infty} \frac{1}{n}\sum_{i=0}^{n-1} \log
     \varphi(F^i(k,x))
    \\ &\leq \limsup_{n\to \infty} \frac{1}{n}\sum_{i=0}^{n-1} \log
    \theta(F^i(k,x)) - \frac{1}{2}\log\sigma.
\end{align*}
Consequently, by taking $\sigma=\sigma(x)$ close enough to one, we
get the following:
\begin{proposition} \label{prop-M}
Given $x\in M$, there is $a=a(x)>0$ such that $x\in M(k,a)$ if and
only if
\begin{equation*} %{eq:local-geodesic-expanding}
\limsup_{n\to \infty} \frac{1}{n}\sum_{i=0}^{n-1}  \log
    \theta(F^i(k,x))<0.
\end{equation*}
Moreover, $m$ is (strong) $F$-expanding/expandable if and only if
it holds
\begin{equation*}
%\label{eq:local-geodesic-expanding}
\limsup_{n\to \infty}
\frac{1}{n}\sum_{i=0}^{n-1} \log
    \theta(F^i(k,x))< -a
\end{equation*}
under the corresponding quantification assumptions and $a\geq 0$.
\end{proposition}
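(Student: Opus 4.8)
Put, for $(k,x)\in\mathcal{M}$,
\[
  \chi_\theta(k,x)=\limsup_{n\to\infty}\frac{1}{n}\sum_{i=0}^{n-1}\log\theta(F^i(k,x)),
  \qquad
  \chi_\varphi(k,x)=\limsup_{n\to\infty}\frac{1}{n}\sum_{i=0}^{n-1}\log\varphi(F^i(k,x)).
\]
The plan is to manufacture, out of the given pointwise Lipschitz constants $\theta_n$, one admissible choice of uniform Lipschitz constants $\varphi$ for the inverse branches of $f_{1,\infty}$ for which $\chi_\varphi\equiv\chi_\theta$ on $\mathcal{M}$; granting that, both assertions of the proposition read off at once from the definitions. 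First I would recall, as in the paragraph preceding the statement, that by \cite[Cor.~2.4 and Lemma~2.3]{DJ10} each $f_n$ is, on a suitable neighbourhood $V$ of any prescribed point, invertible with $d(y,z)\le\|\theta_n\|_{\infty,V}\,d(f_n(y),f_n(z))$; hence $\varphi(n,x)=\|\theta_n\|_{\infty,V}$ is an admissible value, and being a supremum over a set containing $x$ it satisfies $\varphi(n,x)\ge\theta_n(x)$. Taking logarithms, averaging along the $F$-orbit and letting $n\to\infty$ gives $\chi_\theta\le\chi_\varphi$ everywhere on $\mathcal{M}$.

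For the reverse inequality I would exploit the continuity of the $\theta_n$. Fix a sequence $(\sigma_n)$ in $(0,1)$ with $\sigma_n\to1$. Since $\theta_n(x)<\sigma_n^{-1/2}\theta_n(x)$, continuity lets the neighbourhood $V=V_n(x)$ above be taken small enough that also $\|\theta_n\|_{\infty,V}\le\sigma_n^{-1/2}\theta_n(x)$; this selects a single admissible function $\varphi$ with $\theta_n(x)\le\varphi(n,x)\le\sigma_n^{-1/2}\theta_n(x)$ for every $(n,x)$. Consequently
\[
  \frac{1}{n}\sum_{i=0}^{n-1}\log\varphi(F^i(k,x))
  \le\frac{1}{n}\sum_{i=0}^{n-1}\log\theta(F^i(k,x))
    -\frac{1}{2}\cdot\frac{1}{n}\sum_{i=0}^{n-1}\log\sigma_{k+i},
\]
and since $\log\sigma_{k+i}\to0$ as $i\to\infty$, the arithmetic mean in the last term tends to $0$; passing to the $\limsup$ gives $\chi_\varphi\le\chi_\theta$, whence $\chi_\varphi\equiv\chi_\theta$.

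With this identity the proposition is immediate. By the definition of $M(k,a)$, the point $x$ lies in $M(k,a)$ for some $a>0$ precisely when $\chi_\varphi(k,x)<0$, that is, when $\chi_\theta(k,x)<0$, in which case one may take $a=-\tfrac{1}{2}\chi_\theta(k,x)>0$; this is the first assertion. The \emph{moreover} part follows by quantifying this equivalence over $x$ and over $k$, noting that $m$ being $f_{1,\infty}$-expanding asks for a single $k$ with $\chi_\theta(k,x)<0$ for $m$-a.e.\ $x$, that $m$ being $f_{1,\infty}$-expandable asks, for $m$-a.e.\ $x$, for some $k=k(x)$ with $\chi_\theta(k,x)<0$, and that the strong variants impose the same with $\chi_\theta(k,x)<-a$ for one fixed $a>0$. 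I expect the only real point of care to be this matching of quantifiers combined with the requirement that $\varphi$ be a single function: a \emph{constant} $\sigma$ in the displayed inequality yields only $\chi_\varphi\le\chi_\theta-\tfrac{1}{2}\log\sigma$, which is already enough in the strong cases (take $\sigma$ close enough to $1$ once and for all), but in the non-strong expandable case the usable defect $-\chi_\theta(k(x),x)$ varies with $x$, and it is precisely the trick of letting $\sigma=\sigma_n$ depend on the index $n$ with $\sigma_n\to1$ --- as above --- that keeps $\varphi$ a single admissible function while still forcing $\chi_\varphi\equiv\chi_\theta$. Apart from this bookkeeping and the routine continuity-and-shrinking step, no genuine obstacle arises.
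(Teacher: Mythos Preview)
Your argument is correct and follows the same line as the paper's: use the results of \cite{DJ10} to pass from the pointwise Lipschitz constants $\theta_n$ to uniform ones $\varphi(n,x)=\|\theta_n\|_{\infty,V}$, then exploit continuity of $\theta_n$ to shrink $V$ so that $\varphi(n,x)\le\sigma^{-1/2}\theta_n(x)$, sandwiching $\chi_\varphi$ between $\chi_\theta$ and $\chi_\theta-\tfrac12\log\sigma$. The only difference is how the slack $-\tfrac12\log\sigma$ is absorbed. The paper keeps $\sigma$ constant in the displayed estimate and then writes ``by taking $\sigma=\sigma(x)$ close enough to one'', which in effect selects a different admissible $\varphi_{\sigma(x)}$ for each point; this suffices because the first assertion is pointwise and the strong clauses tolerate a fixed $\sigma$. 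Your device of letting $\sigma=\sigma_n$ depend on the time index with $\sigma_n\to1$ is a tidy refinement: it manufactures a \emph{single} admissible $\varphi$ for which $\chi_\varphi\equiv\chi_\theta$ on all of $\mathcal{M}$, so that $M(k,a)$ is defined once and for all and every quantifier pattern in the ``moreover'' clause follows immediately, including the non-strong expandable case that you correctly flag as the delicate one. The two arguments are interchangeable in practice, but yours avoids the slight ambiguity of having the uniform Lipschitz function depend on the point under consideration.
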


\subsubsection{Regular hyperbolic preballs } Next, we are
going to show how we can get regular preballs. To do this, we need
to imose some extra conditions on the metric measure space
$(M,d,m)$ and also on the non-autonomous dynamical systems
$f_{1,\infty}=(f_n)_{n\in\mathbb{N}}$.
%=(f_n)_{n\in\mathbb{N}}$.

We will assume that the measure $m$ is \emph{locally doubling}
(see~\cite[pg.~326]{HKST15}), i.e., there are $\rho>0$ and $L>0$
such that
$$
m(B(x,2r)) \leq L \, m(B(x,r))
$$
for each $x\in M$ and each $0<r \leq \rho$.
Every locally doubling metric measure space
satisfies the density point property.

Finally, we will impose that $f_{1,\infty}=(f_n)_{n\in\mathbb{N}}$
is \emph{conformal} in the sense that $f_n$ is a conformal map for
all $n\in\mathbb{N}$. Namely, there is a function $\phi_n: M\to
\mathbb{R}$ such that for every $x\in M$
%there exists  a
%neighborhood $V$ of $x$ so that the restriction of $f_n$ to $V$ is
%an homeomorphism onto its image and
\begin{equation*}
\lim_{y\to x} \frac{d(f_n(x),f_n(y))}{d(x,y)} = e^{-\phi^{}_n(x)}.
%\quad \text{for all $y\in V$.}
\end{equation*}
%Moreover, $\phi^{}_\pm$ is locally H\"older on $x\in M$, i.e.,
Observe that, in this case
$$
      \theta_n(x)=e^{\phi^{}_n(x)}, \quad \text{for all $x\in M$ and $n\in\mathbb{N}$}.
$$

%And finally, $m$ will need to be \emph{comparable with the radius
%of small balls}. That is, there is $\rho>0$ such that
%$$
%b_1(r)\leq m(B(x,r))\leq b_2(r)
%$$
%for any closed ball $B(x, r)$  of radius $0<r \leq \rho$
%and $x$ in the support of $m$ where $b_1$ and $b_2$ are positive functions so that
%$$
%   \limsup_{r\to 0^+} \frac{b_2(tr)}{b_1(r)} <\infty \quad \text{for all $t>1$ sufficiently close to $1$.}
%$$
%For instance the  normalized Lebesgue measure of a compact manifold is a good example of this type of measures.
%%$b_2(tr)\leq L(t)b_1(r)$ for all small $r>0$  and $t>1$ sufficiently close to $1$.

%{\color{blue} The next proposition  uses that the metric space is geodesic. However, maybe we can remove this assumtion using the same argument of asuming some uniformization in the size of the neighborhood of conformality as Paulo tells me today and I will explain you in the next section. }
\begin{proposition}
\label{prop-conformal} Let $f_{1,\infty}=(f_n)_{n\in\mathbb{N}}$
be a conformal non-autonomous discrete system  on a locally
geodesic compact metric measure space $(M,d,m)$, where $m$ is
locally doubling. If there are $k\in \mathbb{N}$, $0<\epsilon$,
$0<\alpha\leq 1$ and $C_k=C_k(\epsilon,\alpha)>0$ such that
$$
    |\phi^{}_n(x)-\phi^{}_n(y)| \leq C_k \, d(x,y)^\alpha, \quad
    \text{for all $x,y\in M$ with $d(x,y)<\epsilon$ and $n\geq k$,}
$$
then any $(\delta,\lambda)$-hyerbolic preball of order $n$ of a
point $(k,x)\in \mathcal{M}$ with $0<\delta \leq \epsilon$,
$0<\lambda<1$ is regular, i.e., satisfies~\eqref{conformal-def}
with regularity constant $L(\epsilon,\lambda,k)>0$, uniform on $x$
and on the order of the pre-ball.
\end{proposition}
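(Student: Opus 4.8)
The plan is to reduce the regularity property~\eqref{conformal-def} of a $(\delta,\lambda)$-hyperbolic preball $V=V^n_k(x)$ to a bound on its \emph{eccentricity}. Write $R=R_n$ for the radius of the smallest ball around $x$ containing $V$ and $r=r_n$ for the radius of the largest ball around $x$ contained in $V$, and put $K=\exp\!\big(C_k\delta^\alpha(1-\lambda^\alpha)^{-1}\big)$, the distortion constant of Lemma~\ref{lem:distor}. I will show that $R/r\le K^2$, uniformly in $x$ and $n$. Granting this, \eqref{conformal-def} follows at once: by the preball property~\eqref{item2} (taken with $i=0$ and second point $x$) one has $R\le\lambda^n\delta\to 0$, so $R\le\rho$ for all large $n$ — and only preballs of arbitrarily large order occur in the applications — whence, iterating the local doubling inequality $m(B(x,2t))\le L_0\,m(B(x,t))$ at most $N:=\lceil 2\log_2 K\rceil$ times, $m(B(x,R))\le L_0^{\,N}m(B(x,r))$. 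As $K$ is nondecreasing in $\delta\le\epsilon$, the constant $L=L_0^{\,N}$ depends only on $\epsilon$, $\lambda$, $k$ (and on $\alpha$ and $(M,d,m)$), not on $x$ or $n$, as required. So everything comes down to the eccentricity estimate.

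The core of the argument is a two-sided length distortion estimate for the branch $f^n_k$ on $\overline V$. Applying Lemma~\ref{lem:distor} with $\psi_n=\phi_n$ — which is legitimate since $f_{1,\infty}$ is conformal with $\theta_n=e^{\phi_n}$ and the $\phi_n$ satisfy the locally $\alpha$-H\"older condition~\eqref{eq:locally-Holder} for $n\ge k$ — gives, for every $w\in\overline V$,
\begin{equation*}
  K^{-1}e^{-S_n\phi(k,x)}\ \le\ e^{-S_n\phi(k,w)}\ \le\ K\,e^{-S_n\phi(k,x)}.
\end{equation*}
Since $e^{-S_n\phi(k,w)}$ is precisely the pointwise conformal factor of the composition $f^n_k$ at $w$, and $(M,d)$ is locally geodesic (so that the length of a curve is transported by a conformal map through its conformal factor, in the spirit of~\cite{DJ10}), one concludes that for every rectifiable curve $\gamma$ contained in $\overline V$
\begin{equation}\label{eq:plan-length}
  K^{-1}e^{-S_n\phi(k,x)}\,\ell(\gamma)\ \le\ \ell\big(f^n_k\circ\gamma\big)\ \le\ K\,e^{-S_n\phi(k,x)}\,\ell(\gamma).
\end{equation}
Making~\eqref{eq:plan-length} rigorous — passing from the infinitesimal conformality of each $f_{k+i}$, through the composition, to a genuine two-sided inequality whose constant is independent of $n$ — is the main technical obstacle; as in Lemma~\ref{lem:distor}, it is convergence of the geometric series $\sum_i\lambda^{(n-i)\alpha}$ that keeps the constant bounded.

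It remains to extract $R/r\le K^2$ from~\eqref{eq:plan-length}, using in both directions that $f^n_k$ maps $V$ onto the \emph{round} ball $B(f^n_k(x),\delta)$. For the bound on $R=\sup_{w\in V}d(x,w)$: given $w\in V$, join $f^n_k(x)$ to $f^n_k(w)\in B(f^n_k(x),\delta)$ by a geodesic $\tilde\gamma$ of length $d(f^n_k(x),f^n_k(w))<\delta$; it stays at distance $<\delta$ from $f^n_k(x)$, hence inside $f^n_k(V)$, so its preimage $\gamma=(f^n_k|_V)^{-1}(\tilde\gamma)\subset V$ joins $x$ to $w$ and, by the lower bound in~\eqref{eq:plan-length}, $d(x,w)\le\ell(\gamma)\le K e^{S_n\phi(k,x)}\ell(\tilde\gamma)< K\delta\,e^{S_n\phi(k,x)}$; taking the supremum, $R\le K\delta\,e^{S_n\phi(k,x)}$. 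For the bound on $r=d(x,M\setminus V)$: if some $w\notin V$ had $d(x,w)<K^{-1}\delta\,e^{S_n\phi(k,x)}$, a geodesic from $x$ to $w$ would leave $V$ at a first point $w'\in\partial V$, and its initial arc $\gamma\subset\overline V$ from $x$ to $w'$ would satisfy, since $d(f^n_k(x),f^n_k(w'))=\delta$ and using the upper bound in~\eqref{eq:plan-length},
\[
 \delta\ =\ d\big(f^n_k(x),f^n_k(w')\big)\ \le\ \ell\big(f^n_k\circ\gamma\big)\ \le\ K\,e^{-S_n\phi(k,x)}\,\ell(\gamma)\ =\ K\,e^{-S_n\phi(k,x)}\,d(x,w')\ <\ \delta,
\]
a contradiction; hence $r\ge K^{-1}\delta\,e^{S_n\phi(k,x)}$. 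Dividing the two estimates gives $R/r\le K^2$, uniformly in $x$ and $n$, which is what was needed.
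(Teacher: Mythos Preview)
Your proof is correct and follows the same strategy as the paper: apply Lemma~\ref{lem:distor} to bound the distortion of $e^{-S_n\phi}$ on the preball, use the locally geodesic structure (as in~\cite{DJ10}) to convert this into the two-sided estimate $K^{-1}\delta\,e^{S_n\phi(k,x)}\le r\le R\le K\delta\,e^{S_n\phi(k,x)}$, and finish with local doubling. The only cosmetic point is that since $R\le\lambda^n\delta<\delta\le\epsilon$ for \emph{every} $n\ge1$, simply assuming $\epsilon\le\rho$ (as the paper does) makes the doubling step work uniformly in $n$, so you can drop the hedge about ``large $n$''.
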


\begin{proof}
At the first, note that by compactness of $M$, one can assume that any ball
of radius less than  $\epsilon>0$ is contained in a geodesic
neighborhood. On the other hand, it is not difficult to see that
for every $(k,x)\in\mathcal{M}$  and $n\in\mathbb{N}$ it holds
that
%there is a neighborhood $V$ of $x$ such that
$$
 \lim_{y\to x} \frac{d(f_k^n(x),f_k^n(y))}{d(x,y)}=e^{-S_n\phi(k,x)},
 %\quad \text{for all $y\in V$}
$$
where $S_n\phi$ denotes the $n$-th Birkhoff sum of a function
$\phi: \mathcal{M}\to \mathbb{R}$  given by $\phi(k,x)=\phi_k(x)$.
%However, we will need some uniformity on this estimative.

\begin{claim}
For any $0<\delta\leq \epsilon$ and $0<\lambda<1$, there exists
$K=\exp(C_k\delta^\alpha (1-\lambda^\alpha)^{-1})
>0$  such that for any $(\delta,\lambda)$-hyperbolic pre-ball
$V_k^n(x)$ of order $n$ of a point $(k,x)\in \mathcal{M}$, it holds
$$
K^{-1} e^{-S_n\phi(k,x)} d(y,z) \leq d (f_k^{n}(y),f_k^{n}(z))
\leq K e^{-S_n\phi(k,x)} d(y,z), \qquad \text{for all $y,z\in
\overline{V_k^{n}(x)}$}.
$$
\end{claim}
\begin{proof} By Lemma~\ref{lem:distor} and H\"older assumption of $\phi_n$, we find
$K=\exp(C_k\delta^\alpha (1-\lambda^\alpha)^{-1})>0$ such that
\begin{equation*}
     K^{-1} \leq e^{S_n\phi(k,y)-S_n\phi(k,z)} \leq K,   \quad
     \text{for all $y,z\in \overline{V_k^n(x)}$.}
\end{equation*}
In particular,
$$
   e^{-S_n\phi(k,y)} \leq K e^{-S_n\phi(k,x)}
\quad \text{and} \quad
   e^{S_n\phi(k,y)} \leq K e^{S_n\phi(k,x)}, \quad \text{for all
   $y\in \overline{V_k^n(x)}$}.
$$
This implies that the uniform norms $\|e^{-S_n\phi}\|_{\infty}$
and $\|e^{S_n\phi}\|_{\infty}$ in $V_k^n(x)$ are bounded by $K
e^{-S_n\phi(k,x)}$ and $K e^{S_n\phi(k,x)}$ respectively. Let $y$
and $z$ be a pair of points in the closure of $V_k^n(x)$ and
consider a geodesic $\gamma$ joints them, i.e., a rectificable
curve with length $\ell(\gamma)=d(y,z)$. According to
\cite[Lemma~2.3]{DJ10},
\begin{align}
\label{eq:primera} d(f_k^{n}(y),f_k^{n}(z)) \leq
\|e^{-S_n\phi}\|_{\infty}\, \ell(\gamma) \leq K
e^{-S_n\phi(k,x)}\, d(y,z).
\end{align}
Notice that the inverse map of $f_k^n: V_k^n(x) \to
B(f_k^n(x),\delta)$ is also conformal with pointwise Lipschitz
constant given by the exponential of $S_n\phi(k,y)$. Hence,
arguing similarly, as above, one has that
\begin{align}
\label{eq:segunda} d(y,z)
%\leq \|e^{S_n\phi^{}_-}\| \ell(\gamma)
\leq  K e^{S_n\phi(k,x)}\, d(f_k^n(y),f_k^n(z)), \quad \text{for
all $y,z\in \overline{V_k^n(x)}$}.
\end{align}
Putting together \eqref{eq:primera} and \eqref{eq:segunda}, we
conclude the proof of the claim.
\end{proof}

Now, let $B(x,R)$ and $B(x,r)$ be, respectively, the smallest ball
around $x$ containing $V^{n}_k(x)$ and the largest ball around $x$
contained in $V^{n}_k(x)$. Take $y$ and $z$ in the boundary of
$V_k^n(x)$ so that $d(x,y)=R$ and $d(x,z)=r$.
By the above claim %we have that
\begin{equation}\label{eq-con}
    \delta  K^{-1} e^{S_n\phi(k,x)} \leq r \leq R \leq  K e^{S_n\phi(k,x)} \delta.
\end{equation}
In particular, the ratio of $r$ and $R$ do not depend on $n$.
Equation (\ref{eq-con}) implies that $R \leq  tr$, where
$t=K^2=\exp(2C_k\delta^\alpha (1-\lambda^\alpha)^{-1})$. Since $m$
is locally doubling, being $\delta>0$ small enough (this holds if
$\epsilon>0$ is small) one gets that
$$
\frac{m(B(x,R))}{m(B(x,r))} \leq \frac{m(B(x,tr))}{m(B(x,r))} \leq
L <\infty
$$
and this completes the proof.
\end{proof}

\section{Main results on non-autonomous discrete dynamical systems}
\label{sec:main-result} Now, we give the main results of the
paper. In order to do this we sumarize the assumptions that we
need. We have a non-autonomous discrete  system $f_{1,\infty}$
with $f_{1,\infty}=(f_n)_{n\in\mathbb{N}}$ on a metric measurable
space $(M,d,m)$ or equivalently a skew-product map
$$
   F: \mathbb{N}\times M  \to \mathbb{N}\times M, \qquad F(k,x)=(k+1,f_k(x))
$$
under the following assumptions:
\begin{enumerate}[itemsep=1ex,leftmargin=1cm,label={(H\arabic*)}]
%\item \textbf{Hypothesis on the base space:} $\Omega$ is a compact topological space.
% with $\sigma$-algebra $\mathcal{F}$ and a probability $\mathbb{P}$.
\item \label{H1} \textbf{Hypothesis on metric space:}
$(M,d)$ is a compact metric space. %without isolated points.
\item \label{H2} \textbf{Hypothesis on the fiber maps:} $f_n:M\to M$, for all $n\in\mathbb{N}$, is a
local homeomorphism with uniform Lipschitz constant for the inverse branches. That is, %satisfying that
for every $n\in \mathbb{N}$, there is a function $\varphi_n:M\to
\mathbb{R}$ such that for each $x \in M$ there exists a
neighborhood $V$ of $x$ in such away that $f_n:V \to f_n(V)$ is invertible
and
\begin{equation*}
%\label{eq:uniform-lipschitz}
d(y,z) \leq \varphi_n(x) \,
d(f_n({y}),f_n(z)), \quad \text{for all for every $y,z \in V$}.
\end{equation*}
%
%$$
%\varphi:\mathcal{M}\eqdef \mathbb{N}\times M \to (0,\infty), \quad \text{given by} \ \ \
%  \varphi(k,x)^{-1} = \liminf_{y\to x} \,\frac{d(f_k(x),f_k(y))}{d(x,y)}
%$$
%is a continuous function (with the discrete topology in $\mathbb{N}$) or equivalently such that
%$\varphi_n=\varphi(n,\cdot): M \to \mathbb{R}$ is a continuous map for all $n\in \mathbb{N}$.
Additionally, we assume that
$
  \sup\{-\log \varphi_n(x): x\in M, n\in \mathbb{N}\} < \infty.
$
\item  \label{H3} \textbf{Hypothesis on the measure:}
$m$ is a Borel probability on $M$.
We also assume that \\[-0.4cm]
\begin{enumerate}[leftmargin=0.5cm,itemsep=1ex,label={\roman*)}]
\item $m$ is  $f_{1,\infty}$-non-singular, i.e., both $m(f_n(A)) = 0$ and $m(f^{-1}_n(A)) = 0$ whenever $m(A)=0$;
\item $m$ is locally H\"older $f_{1,\infty}$-conformal. That is, there are constants $0<\alpha\leq 1$, $\epsilon>0$ and $C_1>0$ such that  for every $n\in \mathbb{N}$ there is a map $\psi_n: M \to \mathbb{R}$ so that
$$
m(f_n(A)) = \int_{A} e^{-\psi_n(x)} \, dm(x)
$$
for every measurable set $A$ such that $f_n|_A$ is injective and satisfying that
$$
 \qquad  |\psi_n(x)-\psi_n(y)| \leq C_1 \, d(x,y)^\alpha \ \ \text{for all $x,y\in M$ with $d(x,y)<\epsilon$ and $n\in \mathbb{N}$}.
$$
\end{enumerate}
\end{enumerate}

\vspace{0.3cm}

Recalling the notion of local ergodicity in
Definition~\ref{local-ergodicity} we have the following main
result.

\begin{theorem} \label{thm1}
Let $f_{1,\infty}=(f_n)_{n\in\mathbb{N}}$ be a non-autonomous
discrete dynamical system on the metric measure space $(M,d,m)$
under the assumption~\ref{H1}, \ref{H2} and \ref{H3}. Suppose also
that
%$m$ is (strong) $F$-expanding, i.e.,
there is $a\geq 0$ such that
\begin{equation*}
%    \limsup_{n\to \infty} \frac{1}{n}\sum_{i=0}^{n-1}
%    \log \varphi (F^i(1,x)) < -a \quad \text{for $m$-almost every $x\in
%    M$}.
    %=
    \limsup_{n\to \infty} \frac{1}{n}\sum_{i=0}^{n-1}
    \log \varphi^{}_{i+1}(f_1^i(x))<-a \quad \text{for $m$-almost every $x\in
    M$}
\end{equation*}
where $f^0_1=\mathrm{id}$ and $f^i_1=f^{}_{i}\circ\dots\circ
f^{}_1$. Then the probability measure $m$ is locally
$f_{1,\infty}$-ergodic if $a=0$ and strong locally
$f_{1,\infty}$-ergodic if $a>0$.
\end{theorem}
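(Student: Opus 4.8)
The plan is to assemble the tools developed above; the statement is essentially a synthesis of Corollary~\ref{cor-expanding}, Propositions~\ref{prop-delta}, \ref{prop-distortion} and \ref{prop-ergodic}, together with Remark~\ref{rem-ergodic}.

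First I would rewrite the hypothesis in the language of (strong) expanding measures. Since $F^i(1,x)=(1+i,f_1^i(x))$ and $\varphi(k,x)=\varphi_k(x)$, one has $\varphi(F^i(1,x))=\varphi_{i+1}(f_1^i(x))$, so the assumed inequality is precisely
$$
\limsup_{n\to\infty}\frac1n\sum_{i=0}^{n-1}\log\varphi(F^i(1,x))<-a\qquad\text{for $m$-a.e. }x\in M .
$$
For $a=0$ this says that $m$ is $f_{1,\infty}$-expanding (taking $k=1$), and for $a>0$ it says $m(M(1,a))=1$, i.e.\ $m$ is strong $f_{1,\infty}$-expanding. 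Hypothesis~\ref{H2} furnishes exactly the standing assumptions under which the expanding/expandable theory was developed (local homeomorphisms with uniform Lipschitz constant for the inverse branches and finiteness of $\sup\{-\log\varphi(k,x)\}$), so Corollary~\ref{cor-expanding}(1) applies: for $m$-a.e.\ $x$ there is $0<\sigma=\sigma(x)<1$, with $\sigma$ uniform in $x$ when $a>0$, such that the point $(1,x)$ has infinitely many $\sigma$-hyperbolic times.

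Next I would pass from hyperbolic times to hyperbolic preballs with bounded distortion. Take $\epsilon>0$ to be the radius appearing in the locally H\"older conformality of \ref{H3}(ii), and apply Proposition~\ref{prop-delta} with $k=1$ to get $0<\delta_1\le\epsilon$ such that every $\sigma$-hyperbolic time of $(1,x)$ produces a $(\delta_1,\sigma)$-hyperbolic preball of the same order; hence $(1,x)$ has infinitely many $(\delta_1,\sigma(x))$-hyperbolic preballs. Since $m$ is locally H\"older $f_{1,\infty}$-conformal, Proposition~\ref{prop-distortion} (with $k=1$, $0<\delta_1\le\epsilon$ and $0<\sigma<1$) shows that each of these preballs has bounded distortion, with constant $K=K(\delta_1,\sigma,C_1)$. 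Here $\delta_1$ is uniform in $x$, while $\lambda=\sigma$ (and hence $K$) may depend on $x$ when $a=0$.

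Finally I would invoke the ergodicity criterion with the fixed state $k=1$. The previous step shows that for $m$-a.e.\ $x$ the point $(1,x)$ has infinitely many $(\delta_1,\sigma(x))$-hyperbolic preballs with bounded distortion, so Proposition~\ref{prop-ergodic} yields that $m$ is locally $f_{1,\infty}$-ergodic, which is the conclusion for $a=0$. When $a>0$, $\sigma$ can be chosen uniform on $x$, so $\delta_1$, $\lambda=\sigma$ and $K$ are all uniform, and therefore $m(Z_{\delta_1,\sigma}(1))=1$; by Remark~\ref{rem-ergodic} this upgrades the conclusion to strong local $f_{1,\infty}$-ergodicity. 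There is no genuinely hard step: the argument is bookkeeping, and the only delicate point is aligning the quantifiers---choosing the $\epsilon$ of Proposition~\ref{prop-delta} to be the H\"older radius of \ref{H3}(ii), and tracking which of $\delta$, $\lambda$, $K$ are uniform in $x$---so that the hypotheses of Proposition~\ref{prop-ergodic}, respectively Remark~\ref{rem-ergodic}, are exactly met.
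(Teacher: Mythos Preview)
Your proposal is correct and follows essentially the same route as the paper: identify the hypothesis as (strong) $f_{1,\infty}$-expanding, apply Corollary~\ref{cor-expanding} to get infinitely many $\sigma$-hyperbolic times at $(1,x)$, use Propositions~\ref{prop-delta} and~\ref{prop-distortion} (with the $\epsilon$ from \ref{H3}) to convert these into $(\delta_1,\sigma)$-hyperbolic preballs with bounded distortion, and conclude via Proposition~\ref{prop-ergodic} and Remark~\ref{rem-ergodic}. Your version is more explicit about the quantifier bookkeeping, but the argument is the same.
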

\begin{proof}
By assumption $m$ is (strong) $f_{1,\infty}$-expanding for $a=0$
(resp.~$a>0$).  According to Corollary~\ref{cor-expanding}, for
$m$-almost every $x\in M$ we have
  $0<\sigma=\sigma(x)<1$ (resp.~$0<\sigma<1$ uniform on $x$)  such that $(1,x)$ has infinitely
  many $\sigma$-hyperbolic times.
 By Propositions~\ref{prop-delta} and~\ref{prop-distortion},
 there are $0<\delta_1\leq \epsilon$ and $\lambda=\sigma$
 such that $(1,x)$ has infinitely many $(\delta_1,\lambda)$-hyperbolic preballs
 with bounded distortion. Finally by
 Proposition~\ref{prop-ergodic} (resp.~Remark~\ref{rem-ergodic}) we obtain that $m$ is locally (strong) ergodic
 as we want to prove.
\end{proof}

In order to state the second main result we need to impose
slightly strong hypothesis on the measure metric space and the
non-autonomous discrete dynamical system.

\begin{enumerate}[itemsep=1ex,leftmargin=1cm,label={(H\arabic*{*})}]
%\item \textbf{Hypothesis on the base space:} $\Omega$ is a compact topological space.
% with $\sigma$-algebra $\mathcal{F}$ and a probability $\mathbb{P}$.
\item \label{H1'} \textbf{Hypothesis on metric space:}
$(M,d)$ is a compact locally geodesic metric space. %without isolated points.
\item \label{H2'} \textbf{Hypothesis on the fiber maps:}
$f_{1,\infty}$ is locally H\"older conformal. That is, there are
constants $0<\alpha\leq 1$, $\epsilon>0$ and $C_1>0$ such that for
each $n\in\mathbb{N}$ there is a function $\phi_n: M\to
\mathbb{R}$ so that for every $x\in M$,
%there exists  a neighborhood $V$ of $x$ satisfying that the
%restriction of $f_n$ to $V$ is an homeomorphism onto its image,
\begin{equation*}
\lim_{y\to x} \frac{d(f_n(x),f_n(y))}{d(x,y)} = e^{-\phi^{}_n(x)}
%\, d(x,y) \quad \text{for all $y\in V$}
\end{equation*}
and
$$
 \qquad  |\phi_n(x)-\phi_n(y)| \leq C_1 \, d(x,y)^\alpha \ \ \text{for all $x,y\in M$ with $d(x,y)<\epsilon$ and $n\in \mathbb{N}$}.
$$
Additionally, we assume that
$$
  \sup\{ -\phi_n(x): x\in M, n\in \mathbb{N}\} < \infty.
$$

%local homeomorphisms with continuous pointwise Lipschitz constant
%for the inverse branches, i.e.,
%so that %satisfying that
%for every $n\in \mathbb{N}$, there is a function $\varphi_n:M\to
%\mathbb{R}$ such that for each $x \in \mathcal{M}$ there exists a
%neighborhood $V$ of $x$ so that $f_n:V \to f_n(V)$ is invertible
%and
%\begin{equation}
%\label{eq:uniform-lipschitz} d(y,z) \leq \varphi_n(x) \,
%d(f_n({y}),f_n(z)) \quad \text{for all for every $y,z \in V$}.
%\end{equation}
%%
%%$$
%%\varphi:\mathcal{M}\eqdef \mathbb{N}\times M \to (0,\infty), \quad \text{given by} \ \ \
%%  \varphi(k,x)^{-1} = \liminf_{y\to x} \,\frac{d(f_k(x),f_k(y))}{d(x,y)}
%%$$
%%is a continuous function (with the discrete topology in $\mathbb{N}$) or equivalently such that
%%$\varphi_n=\varphi(n,\cdot): M \to \mathbb{R}$ is a continuous map for all $n\in \mathbb{N}$.
%Additionally, we assume that
%$$
%  \sup\{-\log \varphi_n(x): x\in M, n\in \mathbb{N}\} < \infty.
%$$
\item  \label{H3*} \textbf{Hypothesis on the measure:}
$m$ is a $f_{1,\infty}$-non-singular locally H\"older
$f_{1,\infty}$-conformal Borel probability measure on $M$ as
in~\ref{H3}. We also assume that $m$ is locally doubling, i.e.,
there are $\rho>0$ and a constant $L>0$ such that
$$
m(B(x,2r)) \leq L \, m(B(x,r))
$$
for any ball $B(x, r)$ of radius $0<r \leq \rho$ and $x\in M$.
\end{enumerate}

\vspace{0.2cm} Observe that by setting
$\theta_n(x)=e^{\phi_n(x)}$, according to
\S\ref{sec:local-geodesic-expanding} we have that actually the
maps $f_n$ are local homeomorphisms with uniform Lipschitz
constant $\varphi_n(x)=\|\theta_n\|_{\infty,V}$ at a neighborhood
$V$ of $x$. Thus, hypothesis~\ref{H1'}--\ref{H3*} implies
\ref{H1}--\ref{H3}.
  %Our second main result is the following:

\begin{theorem} \label{thm2}
Let $f_{1,\infty}=(f_n)_{n\in\mathbb{N}}$ be a non-autonomous
discrete dynamical system on the metric measure space $(M,d,m)$
under the assumption~\ref{H1'}, \ref{H2'} and \ref{H3*}. Suppose
also that
%$m$ is (strong) $F$-expanding, i.e.,
there is $a\geq 0$ such that for $m$-almost every $x\in M$ there
is $k=k(x)\in \mathbb{N}$ such that
\begin{equation}\label{eq:thm-expandable}
%    \limsup_{n\to \infty} \frac{1}{n}\sum_{i=0}^{n-1}
%    \log \varphi (F^i(k,x))=
    \limsup_{n\to \infty} \frac{1}{n}\sum_{i=0}^{n-1}
    \phi^{}_{k+i}(f_k^i(x))<-a
\end{equation}
where $f^0_k=\mathrm{id}$ and $f^i_k=f^{}_{k+i-1}\circ\dots\circ
f^{}_k$. Then the probability measure $m$ is locally
$f_{1,\infty}$-ergodic if $a=0$ and strong locally
$f_{1,\infty}$-ergodic if $a>0$.
\end{theorem}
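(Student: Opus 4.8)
The plan is to imitate the proof of Theorem~\ref{thm1}, but to invoke the \emph{second} ergodicity criterium, Proposition~\ref{prop-ergodic-conformal}, in place of the first one, Proposition~\ref{prop-ergodic}: now the hyperbolic behaviour of a typical point $x$ is only guaranteed at a state $\{k\}\times M$ with $k=k(x)$ depending on $x$, so there is no single fibre on which to run the first criterium. As observed right after the statement of \ref{H3*}, assumptions \ref{H1'}--\ref{H3*} imply \ref{H1}--\ref{H3}: putting $\theta_n(x)=e^{\phi_n(x)}$, each $f_n$ is a local homeomorphism with uniform Lipschitz constant $\varphi_n(x)=\|\theta_n\|_{\infty,V}$ for the inverse branches and $\sup\{-\log\varphi_n(x)\}\le\sup\{-\phi_n(x)\}<\infty$, so the hyperbolic‑times theory of \S\ref{sec:non-autonomous} applies. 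Since each $\theta_n$ is continuous (being locally H\"older) and $(M,d)$ is locally geodesic, Proposition~\ref{prop-M} rephrases hypothesis~\eqref{eq:thm-expandable} as: $m$ is $f_{1,\infty}$‑expandable if $a=0$, and strong $f_{1,\infty}$‑expandable if $a>0$.

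Next I would run the chain of \S\ref{sec:non-autonomous}. By Corollary~\ref{cor-expanding}(2), for $m$‑almost every $x$ there are $k=k(x)\in\mathbb{N}$ and $0<\sigma=\sigma(x)<1$ — with $\sigma$ uniform in $x$ when $a>0$ — such that $(k,x)$ has infinitely many $\sigma$‑hyperbolic times. Fixing the constants $\alpha,\epsilon,C_1$ of \ref{H2'}--\ref{H3*}, Proposition~\ref{prop-delta} converts these into infinitely many $(\delta_k,\sigma)$‑hyperbolic preballs of $(k,x)$, with $0<\delta_k\le\epsilon$; Proposition~\ref{prop-distortion} (using that $m$ is locally H\"older $f_{1,\infty}$‑conformal) gives these preballs bounded distortion, with constant $K=K(\delta_k,\sigma,C_1)$; and Proposition~\ref{prop-conformal} (using that $M$ is locally geodesic, $f_{1,\infty}$ is conformal and $m$ is locally doubling) makes them regular, with constant $L=L(\epsilon,\sigma,C_1)$. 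Hence, for $m$‑almost every $x$, the point $(k(x),x)$ has infinitely many regular $(\delta,\lambda)$‑hyperbolic preballs with bounded distortion, with $\delta=\delta_{k(x)}$ and $\lambda=\sigma(x)$.

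To finish, since $m$ is locally doubling it satisfies the density point property~\eqref{eq:density}, so Proposition~\ref{prop-ergodic-conformal} applies and shows that $m$ is locally $f_{1,\infty}$‑ergodic; this is the case $a=0$. When $a>0$, by Remark~\ref{rem-ergodic-conformal} it suffices that $\delta$ and $\lambda$ above be uniform in $x$: the rate $\lambda=\sigma$ is uniform because $m$ is \emph{strong} $f_{1,\infty}$‑expandable, the distortion and regularity constants $K,L$ are uniform (depending only on $\epsilon,\sigma,C_1$), and the radius $\delta=\delta_{k(x)}$ is uniform because the numbers $\delta_k$ produced in Proposition~\ref{prop-delta} can be taken bounded away from $0$ (automatic in the settings to which the theorem is applied, e.g. a compact manifold, where every $f_n$ is a covering map and $\delta_k$ is controlled by the injectivity radius of $M$); feeding uniform $\delta$ and $\lambda$ into Remark~\ref{rem-ergodic-conformal} yields strong local $f_{1,\infty}$‑ergodicity.

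The step I expect to be the real obstacle is precisely what distinguishes this result from Theorem~\ref{thm1}: the good state $k(x)$ genuinely varies with $x$, which is what rules out the first criterium and forces the passage from bare to \emph{regular} hyperbolic preballs — this is where conformality of $f_{1,\infty}$ and the locally doubling property of $m$ are used — together with the density point property; and, in the $a>0$ case, the uniformity that has to be monitored is that of the preball radius $\delta$ across the different states $k(x)$, the remaining constants being uniform for free.
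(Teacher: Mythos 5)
Your proposal follows the paper's own proof step for step: it invokes Proposition~\ref{prop-M} to translate~\eqref{eq:thm-expandable} into (strong) expandability, Corollary~\ref{cor-expanding}(2) to obtain hyperbolic times at states $k(x)$, Propositions~\ref{prop-delta}, \ref{prop-distortion} and~\ref{prop-conformal} to upgrade these to regular hyperbolic preballs with bounded distortion, and Proposition~\ref{prop-ergodic-conformal} (resp.\ Remark~\ref{rem-ergodic-conformal}) to conclude. The one place where you go beyond the paper is in flagging the uniformity of $\delta=\delta_{k(x)}$ in the $a>0$ case: this is genuinely needed for Remark~\ref{rem-ergodic-conformal}, the paper's proof is silent on it, and since Proposition~\ref{prop-delta} only produces a non-increasing sequence $(\delta_k)_k$, a bound $\inf_k\delta_k>0$ does not follow from~\ref{H1'}--\ref{H3*} alone; your ``automatic on a compact manifold'' justification is not quite right either, since it actually requires some compactness or equicontinuity of the family $(f_n)_n$ (which holds in the semigroup applications, where the $f_n$ range over a finite set, but not for an arbitrary non-autonomous system satisfying the stated hypotheses).
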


\begin{proof} From Proposition~\ref{prop-M},~\eqref{eq:thm-expandable} implies that the measure
$m$ is (strong) $f_{1,\infty}$-expandable if $a=0$ (resp.~if
$a>0$). Then, according to Corollary~\ref{cor-expanding}, for
$m$-almost every $x\in M$ we have $k=k(x)\in\mathbb{N}$ and
  $0<\sigma=\sigma(x)<1$ (resp.~$0<\sigma<1$ uniform on $x$)
such that $(k,x)$ has infinitely many $\sigma$-hyperbolic times.
By Propositions~\ref{prop-delta}, \ref{prop-distortion}
and~\ref{prop-conformal}, there are $0<\delta_k\leq \epsilon$ and
$\lambda=\sigma$
 such that $(k,x)$ has infinitely many regular $(\delta_k,\lambda)$-hyperbolic
 preballs with bounded distortion.  Finally by
 Proposition~\ref{prop-ergodic-conformal}
 (resp.~Remark~\ref{rem-ergodic-conformal})
 we obtain that $m$ is locally (strong) ergodic. This completes
 the proof.
\end{proof}

%For short, we will introduce the following definition:
%
%\begin{definition}
% We say that $(M,f_{1,\infty},m)$ is a \emph{non-uniformly expanding} non-autonomus discrete system
%if \ref{H1}, \ref{H2} and \ref{H3} are satisfied and $m$ is $F$-expanding. This means that
%$$
%    \limsup_{n\to \infty} \frac{1}{n}\sum_{i=0}^{n-1} \log \varphi (F^i(1,x))=
%    \limsup_{n\to \infty} \frac{1}{n}\sum_{i=0}^{n-1} \log \varphi^{}_{i+1}(f_1^i(x))<0
%$$
%for $m$-almost every $x\in M$ where $f^0_1=\mathrm{id}$ and
%$f^i_1=f^{}_i\circ\dots\circ f^{}_1$. Moreover, we say that
%$(M,f_{1,\infty},m)$ is \emph{non-uniformly strong expanding} if
%$m$ is also strong $F$-expanding, i.e, there exists $a>0$ such
%that for $m$-almost every $x\in M$ it holds
%$$
%    \limsup_{n\to \infty} \frac{1}{n}\sum_{i=0}^{n-1} \log \varphi^{}_{i+1}(f_1^i(x))<-a.
%$$
%\end{definition}
%
%

\begin{remark} \label{example} The assumptions \ref{H1'}, \ref{H2} and \ref{H3*} are satisfied if $M$ is a Riemannian compact manifold,
$m$ is the normalized Lebesgue measure of $M$, the fiber maps
$f_n: M \to M$ are $C^{1+\alpha}$ local diffeomorphisms and the
closure of $f_{1,\infty}=(f_n)_{n\in \mathbb{N}}$ is compact in
the space of $C^{1+\alpha}$ local diffeomorphisms of $M$. In this
case,
$$
  \theta_n(x)=\|Df_n(x)^{-1}\| \quad \text{and} \quad  \psi_n(x)=\log |\det Df_n(x)|.
$$
For instance, this is the case when there are $C^{1+\alpha}$ local
diffeomorphisms $g_1,\dots,g_d$ so that $f_n \in
\{g_1,\dots,g_d\}$ for all $n\in\mathbb{N}$ or more general,
$f_{1,\infty}$ is a path in a random walk on
$\mathrm{Diff}^{1+\alpha}(M)$ induced by a probability measure
$\nu$ with compact support. Indeed, since $f_n$ is $C^{1+\alpha}$
and $M$ is compact then $\phi_n=\log \theta_n$ and $\psi_n$ vary
$\alpha$-H\"older continuously with H\"older  constants $C_n>0$
and $H_n>0$ respectively. The compactness of the closure of
$f_{1,\infty}$ implies that $C_n$, $H_n$ and $\|\phi_n\|_\infty$
are, all of them, uniformly bounded. Thus, in order to satisfy
also \ref{H2'} we need to ask that $f_n$ is conformal, i.e.,
$$
\|Df_n(x)^{-1}\|=\|Df_n(x)\|^{-1} \quad \text{for all $x\in M$ and
$n\in\mathbb{N}$.}
$$
%
%and assume  that the closure of $f_{1,\infty}=(f_n)_{n\in
%\mathbb{N}}$ is a compact in the space of $C^{1+\alpha}$ local
%diffeomorphisms of $M$.
%%with the topology of uniform convergence of the $0\leq r\leq 1+\alpha$ derivatives.
%For instance, this is the case when there are $C^{1+\alpha}$ local
%diffeomorphisms $\phi_1,\dots,\phi_d$ so that $f_n \in
%\{\phi_1,\dots,\phi_d\}$ for all $n\in\mathbb{N}$ or more general,
%$f_{1,\infty}$ is a path in a random walk on
%$\mathrm{Diff}^{1+\alpha}(M)$ induced by a probability measure
%$\nu$ with compact support.
%%In this case,
%%$$
%%  \theta_n(x)=\|Df_n(x)^{-1}\| \quad \text{and} \quad  \psi_n(x)=\log |\det Df_n(x)|.
%%$$
%Since $f_n$ is $C^{1+\alpha}$ and $M$ is compact then $\phi_n=\log
%\theta_n$ and  $\psi_n$ varies $\alpha$-H\"older continuously with
%$\alpha$-H\"older constants $C_n$ and $H_n$ respectively. The
%compactness of the closure of $f_{1,\infty}$ implies that $C_n$,
%$H_n$ and $\|\phi_n\|_\infty$ are all of them uniformly bounded.
\end{remark}

% Notice that the above assumptions are fulfill if $\Omega$ is product space of a compact metric space,
% $M$ is a Riemannian manifold, $F$ is a continuous $C^{1+\alpha}$ skew-product map
% and $m$ is the normalized Lebesgue measure of $M$. By a continuous $C^{1+\alpha}$ skew-product map we understand that
% the fiber maps  $f_\omega: M \to M$ are
% $C^{1+\alpha}$ local diffeomorphisms and the map $\omega \in \Omega \mapsto  f_\omega \in \mathrm{End}^{1+\alpha}(M)$
% is a continuous function being $\mathrm{End}^{1+\alpha}(M)$ the space of $C^{1+\alpha}$ endomorphisms of $M$ with the topology of uniform convergencia of the $0\leq r\leq 1+\alpha$ derivatives. In this case,
% $$
%   \varphi(\omega,x)=\|Df_\omega(x)^{-1}\| \quad \text{and} \quad  \psi(\omega,x)=\log |\det Df_\omega(x)|.
% $$
% Notice since $f_\omega$ is $C^{1+\alpha}$ and $M$ is compact then $Df_\omega$ varies H\"older continuously with H\"older constant $C(\omega)$. The continuity of $\omega \to f_\omega$  implies that $\varphi(\omega,x)$ is a continuos map
% and $C(\omega)$ varies also continuosly.
% Thus by the compatness of $\Omega$ we can find a uniform bound for the H\"older contant.

\section{Main results on semigroup actions} \label{sec:main-thm-semigroups}
Let $(M,d,m)$ be a compact metric Borel probability space. We
consider a skew-product of the form
 $$
   F:\Omega\times M \to \Omega\times M, \quad F(\omega,x)=(\sigma(\omega), f_{\omega}(x)).
 $$
where the fibers maps $f_\omega:M\to M$ are non-singular with
respect $m$. We have in mind that $\sigma$ is the shift map on
either $\Omega=\mathbb{N}$ or $\Omega=\{1,\dots,d\}^\mathbb{N}$.
In the first case we are modeling a non-autonomous dynamical
systems $f_{1,\infty}=(f_n)_{n\in\mathbb{\mathbb{N}}}$. In the
second case we have the action of a semigroup $\Gamma$ finitely
generated by maps $f_1,\dots,f_d$ so that the fiber mas are
locally constant. That is, $f_\omega=f_i$ if
$\omega=(w_n)_{n\in\mathbb{N}}$ with $w_1=i$. %where $\omega=(w_n)_{n\in\mathbb{N}}\in \Omega$.
Now, we reinterpret in this setting some notions previously
introduced for semigroup action or non-autonomous dynamical
systems.

\subsection{Ergodicity}
We will say that $A\subset M$ is \emph{forward $F$-invariant} set
if $f_\omega(A) \subset A$ for all $\omega\in \Omega$.  A forward
$F$-invariant set $A$ with $m(A) >0$ is called an \emph{ergodic
component} of $m$ with respect to $F$, if it does not admit any
smaller forward $F$-invariant subset with positive $m$-measure.
The measure $m$ is called $F$-ergodic if $M$ is an ergodic
component. Equivalently, if $m(A)\in \{0,1\}$ for all forward
$F$-invariant measurable set $A$ of $M$. Finally, analogously to
Definition~\ref{local-ergodicity} we define \emph{locally (strong)
$F$-ergodicity} in this context.
%The
%following proposition shows that locally strong ergodicity implies
%the finiteness of the ergodic components.

\begin{proposition}
If $m$ is locally strong $F$-ergodic then $m$ has finitely many
ergodic components.
\end{proposition}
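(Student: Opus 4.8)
The plan is to show that local strong $F$-ergodicity forces a uniform lower bound on the measure of any ergodic component, and then to argue that disjoint ergodic components (or an infinite descending chain of forward-invariant sets) cannot coexist if each has measure bounded below. First I would recall the definition: there is $\varepsilon>0$ such that for every forward $F$-invariant set $A$ with $m(A)>0$ there exists an open set $B$ with $m(B)>\varepsilon$ and $m(B\setminus A)=0$. In particular, $m(A)\geq m(B\cap A)=m(B)>\varepsilon$; so \emph{every} forward $F$-invariant set of positive measure has measure at least $\varepsilon$.

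Next I would show that two distinct ergodic components are essentially disjoint, in the sense that if $A_1, A_2$ are ergodic components then either $m(A_1\cap A_2)=0$ or $m(A_1\triangle A_2)=0$. Indeed, $A_1\cap A_2$ is again forward $F$-invariant (the intersection of forward-invariant sets is forward-invariant); if it had positive measure it would be a forward-invariant subset of both $A_1$ and $A_2$, and by the defining minimality property of ergodic components it would have to agree with each of $A_1$ and $A_2$ up to a null set, whence $m(A_1\triangle A_2)=0$. Thus genuinely distinct ergodic components $A_1, A_2, \dots$ satisfy $m(A_i\cap A_j)=0$ for $i\neq j$. Since each satisfies $m(A_i)>\varepsilon$ and they are pairwise essentially disjoint subsets of a probability space, we get $1\geq m\bigl(\bigcup_i A_i\bigr)=\sum_i m(A_i)>\sum_i \varepsilon$, which is impossible for more than $1/\varepsilon$ of them; hence there can be at most $\lfloor 1/\varepsilon\rfloor$ distinct ergodic components.

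The one point that needs a little care — and which I expect to be the main (mild) obstacle — is the existence of ergodic components at all, i.e., that every forward $F$-invariant set of positive measure contains one. A priori one might worry about an infinite strictly decreasing chain $A_1\supset A_2\supset\cdots$ of forward-invariant sets with $m(A_{n+1})<m(A_n)$ and no minimal element. But the uniform bound resolves this too: any such chain has $m(A_n)>\varepsilon$ for all $n$, so $m(\bigcap_n A_n)\geq \varepsilon>0$, and $\bigcap_n A_n$ is forward $F$-invariant; by a Zorn's-lemma argument applied to the family of forward-invariant subsets of positive measure (ordered by essential inclusion), a minimal element — an ergodic component — exists inside every positive-measure forward-invariant set. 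Combining this with the counting argument of the previous paragraph yields that $M$ decomposes, modulo a null set, into at most $\lfloor 1/\varepsilon\rfloor$ ergodic components, completing the proof.
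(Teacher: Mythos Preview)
Your proof is correct and rests on the same core observation as the paper's: the uniform $\varepsilon$ forces $m(A)\ge m(B)>\varepsilon$ for every forward $F$-invariant $A$ of positive measure, so pairwise essentially disjoint ergodic components can number at most $\lfloor 1/\varepsilon\rfloor$. The paper's argument is a one-liner that instead invokes compactness of $M$ together with the (slightly stronger than the stated definition) fact that $B$ is an open \emph{ball of fixed radius}; your purely measure-theoretic bound $\sum_i m(A_i)\le 1$ uses only the inequality $m(B)>\varepsilon$ from the definition and is therefore a bit cleaner. The Zorn-lemma paragraph on existence of ergodic components goes beyond what the paper proves (it only bounds their number) and is not needed for the proposition as stated; if you keep it, note that passing from countable chains to arbitrary chains requires one more word (take a sequence realizing the infimum of the measures along the chain).
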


\begin{proof} From the strong $F$-ergodicity we have $\varepsilon>0$
so that for any $F$-invariant set $A$ with positive measure there
is an open ball $B$ of uniform fixed radius with
$m(B)>\varepsilon$ such that $m(B\setminus A)=0$.  Since $M$ is
compact, there can be only finitely many disjoints $F$-invariant
subsets with positive $m$-measure. Hence, we only have finitely
many ergodic components of $m$.
\end{proof}

The formalism of the notion of exactness with respect to $m$,
perviously defined (to the Lebesgue measure), in this context is
the following. We say that $F$ is \emph{$m$-exact} if for every
open set $B$ of $M$, there are sequences $(n_k)_k$ and
$(\omega_k)_k$ in $\mathbb{N}$ and $\Omega$ respectively such that
$$
M = \bigcup_{k\geq 1} f^{n_k}_{\omega_k}(B) \quad \text{modulo a
set of zero $m$-measure.}
$$

\begin{proposition}
\label{prop-ergodic} If $F$ is $m$-exact and $m$ is locally
$F$-ergodic then $m$ is $F$-ergodic.
\end{proposition}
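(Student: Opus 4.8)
The plan is to show that the only forward $F$-invariant set of positive measure, modulo null sets, is $M$ itself. So let $A$ be a forward $F$-invariant measurable set with $m(A)>0$; I want to prove $m(M\setminus A)=0$. Since $m$ is locally $F$-ergodic, Definition~\ref{local-ergodicity} furnishes an open set $B$ of $M$ with $m(B\setminus A)=0$; in particular $m(B)>0$, so $B$ is a nonempty open set. Now apply $m$-exactness to this open set $B$: there are sequences $(n_k)_k$ in $\mathbb{N}$ and $(\omega_k)_k$ in $\Omega$ such that
$$
M = \bigcup_{k\geq 1} f^{n_k}_{\omega_k}(B) \quad \text{modulo a set of zero $m$-measure.}
$$

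The key step is then to observe that each $f^{n_k}_{\omega_k}(B)$ is contained in $A$ modulo a null set. Indeed, writing $f^{n_k}_{\omega_k}$ as the composition $f_{\sigma^{n_k-1}(\omega_k)}\circ\cdots\circ f_{\omega_k}$, forward invariance of $A$ gives $f^{n_k}_{\omega_k}(A)\subset A$. Moreover $B=(B\cap A)\cup(B\setminus A)$ with $m(B\setminus A)=0$, so
$$
f^{n_k}_{\omega_k}(B) \subset f^{n_k}_{\omega_k}(B\cap A)\cup f^{n_k}_{\omega_k}(B\setminus A) \subset A \cup f^{n_k}_{\omega_k}(B\setminus A),
$$
and since $m$ is non-singular (each $f_\omega$ sends null sets to null sets, hence so does any finite composition), $m(f^{n_k}_{\omega_k}(B\setminus A))=0$. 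Therefore $m(f^{n_k}_{\omega_k}(B)\setminus A)=0$ for every $k$. Taking the countable union over $k$ and using the displayed exactness identity, $m(M\setminus A)\leq \sum_k m(f^{n_k}_{\omega_k}(B)\setminus A)=0$. Hence $m(A)=1$, and since $A$ was an arbitrary forward $F$-invariant set of positive measure, $m$ is $F$-ergodic.

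I do not expect any serious obstacle here: the whole content is packaged into local $F$-ergodicity (which produces the open "germ" $B$ of full relative $A$-measure) and $m$-exactness (which spreads $B$ over all of $M$ under the dynamics), with non-singularity of the fiber maps as the only technical lubricant needed to pass from $B$ to its forward images without creating spurious mass outside $A$. The one point deserving a line of care is the remark that non-singularity of the individual maps $f_\omega$ passes to finite compositions $f^{n}_{\omega}$, which is immediate since a composition of maps each preserving the class of $m$-null sets again preserves it.
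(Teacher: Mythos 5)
Your argument is correct and is essentially the paper's proof: you and the paper both reduce to the elementary observation (via forward invariance $f(A)\subset A$) that $f^{n_k}_{\omega_k}(B)\setminus A \subset f^{n_k}_{\omega_k}(B\setminus A)$, then invoke non-singularity of the fiber maps and exactness to conclude $m(M\setminus A)=0$. The one cosmetic slip is the line ``in particular $m(B)>0$, so $B$ is a nonempty open set'': this does not follow from $m(B\setminus A)=0$ alone (the empty set would satisfy it vacuously), though it is clearly part of the intended meaning of local $F$-ergodicity and the paper's construction always produces a ball of positive measure.
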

\begin{proof} Let $A$ be a forward $F$-invariant measurable set.
By the local ergodicity of the measure $m$ we get an open set $B$
of $M$ such that $m(B\setminus A)=0$. First observe the following.

\begin{claim} For any function $f$ we have that $f(B)\setminus A
\subset f(B\setminus A)$.
\end{claim}
\begin{proof} If $x\in f(B)\setminus A$ then $x=f(b)\not \in A$ with $b\in
B$. Moreover, $b\not \in A$ since otherwise $f(b)\in f(A)\subset
A$. Thus, $x\in f(B)\setminus A$ as required.
\end{proof}

Now, using this claim and since $F$ is $m$-exact and $A$ is a
forward $F$-invariant set, we get
$$
M \setminus A \subset \bigcup_{k\geq 1}
f^{n_k}_{\omega_k}(B\setminus A) \quad \text{modulo a set of zero
$m$-measure.}
$$
Since $m$ is non-singular, we obtain that $A$ has
full $m$-measure and conclude the proof.
\end{proof}

\subsection{Proof of Theorems~\ref{thmA},~\ref{thmB} and~\ref{thmC}}
Let us consider a semigroup $\Gamma$ finitely generated by
$C^{1+\alpha}$ local diffeomorphisms $f_1,\dots,f_d$ of a compact
manifold $M$. We consider the associated skew-product $F$ as
above. We will first deduce Theorem~\ref{thmA} from
Theorem~\ref{thmB}.

It is not difficult to see that the expansion assumption of
Theorem~\ref{thmA} implies the non-uniform expansion assumption in
Theorem~\ref{thmB}. Thus, we only need to prove that $F$ is exact
(with respect to the Lebesgue measure $m$). This will be achieved
in the following lemma:

 \begin{lemma}
 Assume that there exist $\omega\in \Omega$, $C>0$ and $\lambda>1$  such that
 $$
 \|Df_\omega^n(x)v\|\geq C \lambda^{n} \|v\| \quad \text{for all
 $n\in\mathbb{N}$, $x\in M$ and $v\in T_xM$.}
 $$
 Then, given $x\in M$ and $\varepsilon>0$ there exists $n\in \mathbb{N}$ such that
 $M=f_\omega^{n}(B(x,\varepsilon))$.
 \end{lemma}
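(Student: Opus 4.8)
The plan is to use that each generator is a covering map of $M$ and that the expansion hypothesis forces every inverse branch of $f_\omega^n$ to contract distances by a factor $(C\lambda^n)^{-1}$; a single preimage of an arbitrary point $y\in M$, obtained by lifting a minimizing geodesic from $f_\omega^n(x)$ to $y$, will then land in $B(x,\varepsilon)$ once $n$ is large.

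First I would observe that every $f_i\colon M\to M$ is a local diffeomorphism of a compact boundaryless manifold, hence a proper local homeomorphism, hence a covering map; as $M$ is connected, each $f_i$ — and therefore each composition $f_\omega^n=f_{\omega_n}\circ\dots\circ f_{\omega_1}$ — is a surjective covering map, in particular it has the path‑lifting property. (If $M$ is disconnected one argues component by component.) Next I would record the metric estimate: if $\gamma\colon[0,1]\to M$ is a piecewise $C^1$ curve and $\tilde\gamma$ is a lift, i.e.\ $f_\omega^n\circ\tilde\gamma=\gamma$, then differentiating gives $\dot\gamma(t)=Df_\omega^n(\tilde\gamma(t))\,\dot{\tilde\gamma}(t)$, so the hypothesis yields $\|\dot\gamma(t)\|\ge C\lambda^n\|\dot{\tilde\gamma}(t)\|$; integrating over $[0,1]$ we get $\ell(\tilde\gamma)\le (C\lambda^n)^{-1}\ell(\gamma)$.

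Now the covering argument. Since $M$ is a compact (hence complete) Riemannian manifold and its distance $d$ is the associated length metric, any two points are joined by a minimizing geodesic, so $\diam(M)<\infty$. Fix $n\in\mathbb{N}$ large enough that $\diam(M)/(C\lambda^n)<\varepsilon$. Given an arbitrary $y\in M$, let $\gamma$ be a minimizing geodesic from $f_\omega^n(x)$ to $y$, so $\ell(\gamma)=d(f_\omega^n(x),y)\le \diam(M)$, and let $\tilde\gamma$ be the lift of $\gamma$ with $\tilde\gamma(0)=x$. Then $f_\omega^n(\tilde\gamma(1))=\gamma(1)=y$ and, by the estimate above, $d(x,\tilde\gamma(1))\le \ell(\tilde\gamma)\le \diam(M)/(C\lambda^n)<\varepsilon$, hence $\tilde\gamma(1)\in B(x,\varepsilon)$ and $y\in f_\omega^n(B(x,\varepsilon))$. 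Since the chosen $n$ does not depend on $y$, this proves $M=f_\omega^n(B(x,\varepsilon))$.

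The point requiring the most care is the covering/path‑lifting structure: one must justify that a local diffeomorphism out of a compact space is proper and therefore a covering map, that $M$ is connected (or argue per component), and that $d$ is a geodesic metric so that minimizing geodesics exist and realize $d$. If one prefers to avoid covering‑space theory, the same conclusion follows by fixing an evenly‑covered radius $r_0$ for each $f_i$, deducing one for $f_\omega^n$, and chaining finitely many contracting inverse branches of $f_\omega^n$ along a finite chain of $r_0$‑balls covering the geodesic; the bookkeeping is essentially identical.
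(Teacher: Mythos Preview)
Your proof is correct and follows essentially the same route as the paper's: both arguments lift a curve of length at most $\diam(M)$ through the local diffeomorphism $f_\omega^n$ starting at $x$, and use the expansion hypothesis to bound the length of the lift by $(C\lambda^n)^{-1}\diam(M)$. The only differences are cosmetic: the paper phrases it as a proof by contradiction (assuming some $y_n\notin f_\omega^n(B)$ for every $n$ and deriving that the lifted curve has length $\geq\varepsilon$ while the original has bounded length), whereas you argue directly; and you make explicit the covering-map / path-lifting justification that the paper invokes only by saying ``since $f_\omega^n$ is a local diffeomorphism, there is a unique curve $\hat\gamma_n$\dots''.
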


 \begin{proof}
 Assume by contradiction that $M\not =f^n_\omega(B)$
 for all $n \in \mathbb{N}$ where $B=B(p,\varepsilon)$
 is the open ball of radius $\varepsilon$ and centered at $x$. Then, for each $n\in\mathbb{N}$ we may a smooth curve $\gamma_n$ joining
 $f^n_\omega(p)$ to a point
 $y_n \in M \setminus f^n_\omega(B)$ of length less than the diameter of the manifold. Since $f^n_\omega$ is a local diffeomorphism, there is a unique curve $\hat{\gamma}_n$ joining $p$ to some point $x \in  M \setminus B$ such that $f^n_\omega(\hat{\gamma}_n) = \gamma_n$. Hence the length of $\gamma_n$ is
 \begin{align*}
  \int \|\gamma'_n(t)\| \, dt =  \int \| Df^n_\omega(\hat{\gamma}_n(t)) \cdot \hat{\gamma}'_n(t) \| \, dt  \geq C\lambda^{n} \int \|\hat{\gamma}'_n(t)\| \, dt.
 %= C \lambda^{n} \mathrm{length}(\hat{\gamma}_n).
 \end{align*}
 But since length of $\hat{\gamma}_n$ is larger than $\varepsilon$ we arrive to a contradiction for $n$ large enough.
 %Finally, that every point $x$ has dense preorbit. Indeed, given any open set $V$ in $M$ we find an open ball $B$ contained in
 %$B$ and $n\in \mathbb{N}$ such that $f^n_\omega(B)=M$ and
 \end{proof}

 Theorem~\ref{thmB} immediately follows from
 Remark~\ref{example}, Theorem~\ref{thm1} and
 Proposition~\ref{prop-ergodic}.

Similarly we will prove Theorem~\ref{thmC}. First we need to prove
that if $\Gamma$ is non-uniformly expandable then the Lebesgue
measure $m$ is locally $F$-ergodic. To do this we proceed as in
Theorem~\ref{thm2}. Let $A\subset M$ be a $\Gamma$-invariant set
with $0<m(A)<1$. Since $\Gamma$ is non-uniformly expandable, we
find a Lebesgue density point $x\in A$ and a sequence $\omega\in
\Omega$ such that
\begin{equation*}
   \limsup_{n\to\infty} \frac{1}{n} \sum_{i=0}^{n-1}
  \log \|Df^{}_{\omega_{i+1}}(f^i_\omega(x))^{-1}\|<0.
\end{equation*}
Using Remark~\ref{example}, we have that a non-autonomous
dynamical system $f_{1,\infty}=(f_n)_{n\in\mathbb{N}}$ where
$f_n=f_{\omega_n}$ such that
$$
 \limsup_{n\to \infty} \frac{1}{n}\sum_{i=0}^{n-1} \log
     \theta_{k+i}(f^i_1(x))<0
$$
where $\theta_n(x)=\|Df_n(x)^{-1}\|$ is the pointwise Lipschitz
constant for the inverse branches of $f_n$. According to
Proposition~\ref{prop-M}, there is $a=a(x)>0$ such that $x\in
M(1,a)$. Then, Proposition~\ref{prop-infinite-hyperbolic-times}
implies that there is $\sigma>0$ such that $(1,x)$ has infinitely
many $\sigma$-hyperbolic times. By
Propositions~\ref{prop-delta},~\ref{prop-distortion}
and~\ref{prop-conformal}, there are $0<\delta_1\leq \epsilon$ and
$\lambda=\sigma$ such that $(1,x)$ has infinitely many
$(\delta_1,\lambda)$-hyperbolic regular preballs with bounded
distortion. Finally by Remark~\ref{rem1} we get that there is $z$
such that $m(B(z,\delta/2)\setminus A)=0$. This concludes that $m$
is locally ergodic. Finally, since by assumption, also the action
of $\Gamma$ is exact, then Proposition~\ref{prop-ergodic}
concludes that $m$ is ergodic completing the proof of
Theorem~\ref{thmC}.

\subsection{Examples} We will show some new examples where our main
result Theorem~\ref{thmC} applies. As we indicated in the
introduction,~\cite[Thm.~B]{RZ16} has a gap in its proof and only
works for transitive group of diffeomorphisms. For
\emph{semigroup} action of \emph{local} diffeomorphisms
Theorem~\ref{thmC} requires that the action is exact instance
transitive. From this theorem we cover the result in~\cite{BFMS}
on the ergodicity of the Lebesgue measure for expanding minimal
conformal semigroup action of diffeomorphisms. But also
Theorem~\ref{thmC} extends this result for semigroups of
\emph{local} diffeomorphisms as we will see below. First we
introduce some definitions:

\begin{definition}
The action of a semigroup $\Gamma$ of $C^1$ local diffeomorphisms
of $M$ is said to be backward expanding if there is for every
$x\in M$ there is $h\in \Gamma$ such that $\|Dh(x)^{-1}\|<1$.
\end{definition}

Usually a semigroup action is said to be minimal if every orbit is
dense. Since $M$ is compact, this is equivalent to ask that the
whole space can be covered by finitely many pre-images by elements
of $\Gamma$ of any open set. For this reason we introduce the
following definition:

\begin{definition}
The action of a semigroup $\Gamma$ of local diffeomorphisms of $M$
is said to be backward minimal if for every open set $U\subset M$
there are maps $h_1,\dots, h_n$ in $\Gamma$  such that
$M=h_1(U)\cup \dots \cup h_n(U)$.
\end{definition}

Observe that if the action is backward minimal then it is also
exact. Thus with the above definitions, the following result is a
corollary of Theorem~\ref{thmC}.

\begin{corollary}
Every backward expanding and backward minimal semigroup action of
conformal $C^{1+\alpha}$ local diffeomorphisms of a compact
manifold is ergodic with respect to Lebesgue mesure.
\end{corollary}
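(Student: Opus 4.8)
The plan is to verify that a backward expanding and backward minimal semigroup action by conformal $C^{1+\alpha}$ local diffeomorphisms of a compact manifold $M$ satisfies the two dynamical hypotheses of Theorem~\ref{thmC} --- exactness and non-uniform expandability --- and then to invoke that theorem directly. Conformality and $C^{1+\alpha}$-regularity of the generators $f_1,\dots,f_d$ are part of the assumption, so only the two dynamical conditions require an argument.

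Exactness is essentially immediate. Given an open set $B\subset M$, backward minimality furnishes finitely many $h_1,\dots,h_r\in\Gamma$ with $M=h_1(B)\cup\dots\cup h_r(B)$; writing each $h_j$ as an orbital branch $h_j=f^{m_j}_{\eta^{(j)}}$ exhibits $M$ as a finite, hence countable, union of sets $f^{m_j}_{\eta^{(j)}}(B)$, which is an exact equality and in particular holds modulo a set of zero Lebesgue measure. So $F$ is $m$-exact.

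For non-uniform expandability I would first upgrade backward expansion to a finite cover by uniformly contracting maps: for each $x\in M$ there is $h_x\in\Gamma$ with $\|Dh_x(x)^{-1}\|<1$, and since $h_x$ is a fixed $C^1$ map the function $y\mapsto\|Dh_x(y)^{-1}\|$ is continuous, so there are a neighbourhood $V_x$ of $x$ and $\lambda_x<1$ with $\|Dh_x(y)^{-1}\|\le\lambda_x$ on $V_x$; by compactness of $M$ extract $V_{x_1},\dots,V_{x_k}$ covering $M$ and set $\lambda_0=\max_j\lambda_{x_j}<1$, $h^{(j)}=h_{x_j}=f^{m_j}_{\eta^{(j)}}$, $N=\max_j m_j$. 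Then, starting from any $x\in M$, choose $j_0$ with $x\in V_{x_{j_0}}$, then $j_1$ with $h^{(j_0)}(x)\in V_{x_{j_1}}$, and so on; concatenating the finite words $\eta^{(j_0)}\eta^{(j_1)}\cdots$ produces an orbital branch $\omega\in\Omega$ whose iterates at the partial-length times $N_\ell=m_{j_0}+\dots+m_{j_{\ell-1}}$ equal $h^{(j_{\ell-1})}\circ\dots\circ h^{(j_0)}$. This is where conformality enters: for conformal maps the operator norm of a composition is the product of the norms, so $\|Df^n_\omega(x)^{-1}\|=\prod_{i=0}^{n-1}\|Df_{\omega_{i+1}}(f^i_\omega(x))^{-1}\|$ for every $n$, and in particular $\sum_{i=0}^{N_\ell-1}\log\|Df_{\omega_{i+1}}(f^i_\omega(x))^{-1}\|=\log\|Df^{N_\ell}_\omega(x)^{-1}\|\le\ell\log\lambda_0$.

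To conclude, for arbitrary $n$ with $N_\ell\le n<N_{\ell+1}$ I would split the Birkhoff sum at $N_\ell$: the first block is $\le\ell\log\lambda_0$, while the remaining at most $N$ terms are each $\le B\eqdef\sup\{\log\|Df_i(z)^{-1}\|:1\le i\le d,\ z\in M\}<\infty$ (finite by compactness of $M$ and finiteness of the generating set). Since $1\le m_{j_r}\le N$ gives $\ell\le N_\ell\le\ell N$, one gets $\frac1n\sum_{i=0}^{n-1}\log\|Df_{\omega_{i+1}}(f^i_\omega(x))^{-1}\|\le\frac{\ell\log\lambda_0+NB}{N_\ell}\le\frac{\log\lambda_0}{N}+\frac{NB}{N_\ell}$, whose $\limsup$ as $n\to\infty$ (so $\ell\to\infty$) is at most $\tfrac1N\log\lambda_0<0$. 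Hence \eqref{eq0} holds for every $x\in M$ (indeed with a uniform bound), the action is non-uniformly expandable, and Theorem~\ref{thmC} yields ergodicity with respect to Lebesgue measure. The one delicate point is the conformal factorization of $\|Df^n_\omega(x)^{-1}\|$ as a product over steps: without conformality one only has submultiplicativity, and in the direction that is useless here, so a contraction of the composed inverse derivative would not control the individual logarithmic terms appearing in \eqref{eq0}.
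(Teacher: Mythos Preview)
Your argument is correct and mirrors the paper's proof: both deduce exactness directly from backward minimality, upgrade backward expansion via compactness to a finite cover with uniform contraction constant $\lambda_0<1$, concatenate the corresponding words into an orbital branch $\omega$, use conformality to factor $\|Df^n_\omega(x)^{-1}\|$ multiplicatively over single steps, and then interpolate between the special times $N_\ell$ to control the full Birkhoff average. One minor slip worth fixing: the displayed inequality $\tfrac1n\sum\le\tfrac{\ell\log\lambda_0+NB}{N_\ell}$ is false once the numerator is negative (since $n\ge N_\ell$); the clean route is to keep the denominator $n$ and use $n<N_{\ell+1}\le(\ell+1)N$ to get $\tfrac{\ell}{n}>\tfrac{\ell}{(\ell+1)N}\to\tfrac1N$, which still gives $\limsup\le\tfrac1N\log\lambda_0<0$ and leaves your conclusion intact.
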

\begin{proof}
We only need to note that if the action is backward expanding then
also it is non-uniformly expandable. To do this, we first observe
from the compactness of $M$ and the $C^1$-differentiability of the
maps in $\Gamma$ we get a finite open cover $\{V_1,\dots,V_m\}$ of
$M$ and maps $h_1,\dots, h_m$ in $\Gamma$ such that
$\|Dh_i(x)^{-1}\|<\sigma<1$ for all $x\in V_i$ for all
$i=1,\dots,m$. Thus, given any point $x\in M$ we can construct a
sequence $(i_n)_{n\in \mathbb{N}}$ with $i_n\in \{1,\dots,m\}$
such that $x\in V_{i_1}$ and $h_{i_{n-1}}\circ \dots \circ
h_{i_1}(x)\in V_{i_n}$ for $n\geq 2$. Let $k_n$ be the number of
generators $f_1,\dots,f_d$ involved in the composition of
$h_{i_n}$. Observe that $k_n$ only take finitely many values for
all $n\geq 1$. In particular we have $k\in \mathbb{N}$ such that
$k_n \leq k$ for all $n\in \mathbb{N}$. Take $\omega\in
\Omega=\{1,\dots,d\}^\mathbb{N}$ such that
$f^{\ell_n}_\omega=h_{i_n}\circ \dots \circ h_{i_1}$ where
$\ell_n=k_1+\dots+k_n$ for all $n\in\mathbb{N}$. Hence, by the
conformality of the generators of $\Gamma$ we have
$$
  \frac{1}{\ell_n} \sum_{j=0}^{\ell_n-1}
  \log \|Df^j_{\omega_{j+1}}(f^{j}_\omega(x))^{-1}\| =
  -\frac{1}{\ell_n} \sum_{j=0}^n \log \|Dh_{i_{j+1}}(h_{i_j}(x))\|
  -\leq \frac{n}{\ell_n} \log \sigma \leq -\frac{1}{k} \log \sigma
$$
Now one only need to write $\frac{1}{n} \sum_{i=0}^n a_i
=\frac{\ell_{m}}{n}\frac{1}{\ell_m} \sum_{i=0}^{\ell_m} a_i +
\frac{1}{n} \sum_{i=\ell_m+1}^n a_i$ where $ \ell_m\leq  n \leq
\ell_{m+1}$. Having into account that $\ell_{m+1}-\ell_{m} \leq
k$, $\ell_m \leq k m < k n$ and $a_i=\log
\|Df^i_{\omega_{i+1}}(f^{i}_\omega(x))^{-1}\|$ is uniformly
bounded we conclude~\eqref{eq1}. \end{proof}

Theorem~C can be also used to provide new examples of semigroup
actions of diffeomorphisms which are not expanding as the
following example show.

\begin{example}
Here, we give an example of a semigroup action which is exact and
non-uniformly expanding, but not expanding. Consider the semigroup
$\Gamma$ generated by two $C^{1+\alpha}$ diffeomorphisms $f_0$,
$f_1$ on the unit interval $[0,1]$ with the following properties:
\begin{enumerate}
\item $f_0$ and $f_1$ have both exactly two fixed points: $f_0(0)=f_1(0)=0$ and
$f_0(1)=f_1(1)=1$;
\item $Df_0(0)<1$,  $Df_0(1)=1$ and $Df_1(0)>1$, $Df_1(1)\leq 1$;
%$$ Df_0(0)<1,  \quad Df_0(1)=1 \qquad \text{and} \qquad
%Df_1(0)>1, \quad Df_1(1)\leq 1;$$
\item $\log Df_0(0) / \log Df_1(0) \not \in \mathbb{Q}$;
\item there are points $0<a<c_1<c_2<b<1$ such that
 \begin{enumerate}
 \item $f_{0}([c_1,b])\cup f_1([a,c_2]) \subseteq [a,b]$,
 \item $Df_1(x)>1$ for all $x\in [a,c_1]$ and $Df_0(x)>1$ for all $x\in [c_2,b]$,
 \item $\min_{x\in [c_1,c_2]} \max\{ Df_0(x), Df_1(x)\} >1$.
 \end{enumerate}
% $$f_{0}([c_1,b])\cup f_1([a,c_2])=[a,b],  \quad Df_0(x)>1 \quad \text{for all $x\in [a,c_1]$}$$
%the map $f_0$ expands $[a,c_1]$, $f_1$ expands $[c_2,b]$ and
%$$
%   \min \max\{ Df_0(x), Df_1(x): x\in [c_1,c_2] \} >1.
%$$
\end{enumerate}
Figure \ref{f_0f_1f_2} shows a schematic graph of such
diffeomorphisms. Since both generators have $Df_i(1)\leq 1$ the
action of semigroup $\Gamma$ is not backward expanding on
$M=[0,1]$.
%
%
%\cite{I10}. There, the author shows the minimality of the action
%of semi-group $\mathcal{G}^+(f_0^{-1},f_1^{-1})$ on the unit
%interval.
We claim that the action of semigroup is non-uniformly expanding.
More precisely, we show that for any $x\ne 0,1$, there
$\omega=\omega(x)\in \Omega=\{0,1\}^\mathbb{N}$ with
\begin{equation}\label{eq00}
  \limsup_{n\to\infty} \frac{1}{n} \sum_{i=0}^{n-1}
  \log \|Df^{}_{\omega_{i+1}}(f^i_\omega(x))^{-1}\|<0.
\end{equation}
%where $\epsilon=\inf_{x\in[\alpha,\beta]}\min\{\log Df_0(x),\log
%Df_1(x)\}$.
The conclusion consists of two parts, completely
straightforward.
\begin{itemize}
%\item [1.] For a given $\epsilon>0$ there is $\eta=\eta(\epsilon)$ such that
%$$\min_{x\in[\eta,1-\eta]}\max\{Df_0(x),Df_1(x)\}>e^\epsilon;$$
\item [1)] \label{eq-1} for any $x\in (0,1)$, there is a $m=m(x)$ such that either $f_0^{m}(x)$ or $f_1^{m}(x)$ belongs to $[a,b]$;
\item [2)] \label{eq-2} for any $x\in[a,b]$, there is a sequence
$\bar\omega=(\bar\omega_n)_{n\in\mathbb{N}} \in \Omega$ such that
$$f^n_{\bar\omega}(x)\in [a,b] \quad \text{and} \quad
Df_{\bar\omega_{n+1}}(f^n_{\bar\omega}(x))>1 \quad \text{for any
$n\geq 0$.}
$$
\end{itemize}
Now, for any $x\in (0,1)$, considering the concatenation
$\omega=\omega(x)$ of the words obtaining above we get that
condition~\eqref{eq00} holds along $\omega$. To complete the proof
we need to show that the action of $\Gamma$ is exact. To do this,
first we will observe that it is enough to prove that the orbit by
 the inverse semigroups, i.e., the semigroup
generated by $f_0^{-1}$ and $f_1^{-1}$, of any point in $(0,1)$ is
dense in $M=[0,1]$. Indeed, the density of the backward orbit
provides that for each open set $U$ and point $x\in (0,1)$ we have
a map $h\in \Gamma$ such that $x\in h(U)$.  Since $(0,1)$ is a
Lindel\"of space we can get a countable subcover and thus we get
the action of $\Gamma$ is exact. Now, the density of the backward
orbit of any point $x\in (0,1)$ it follows by the non-resonant
case in~\cite[Lem.~3]{I10} (see also~\cite[Prop.2.1]{GH17}) which
is ours assumption that $f_0$ and $f_1$ has logarithmic rational
independent derivatives at zero.
\begin{figure}
\centering
   \begin{picture}(250,230)
\includegraphics[scale=0.5]{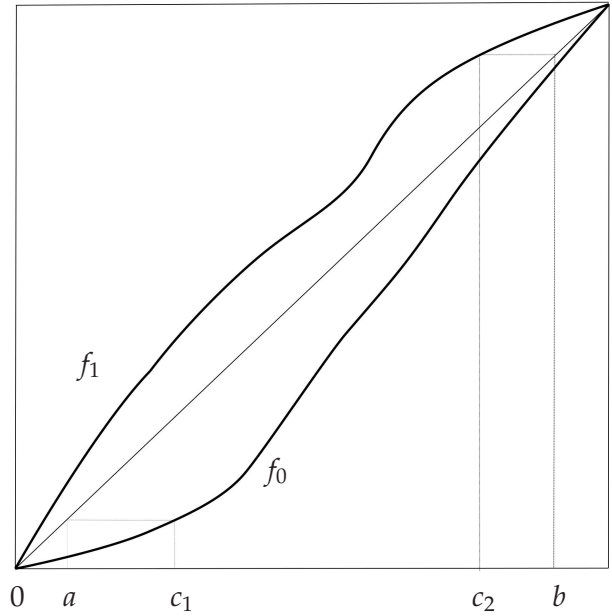}
\put(-200,83){$f_1$} \put(-130,43){$f_0$} \put(-225,-5){$0$}
\put(-205,-5){$a$} \put(-165,-5){$c_1$} \put(-52,-5){$c_2$}
\put(-22,-5){$b$}
 \end{picture}
\caption{Diffeomorphisms $f_0$, $f_1$} \label{f_0f_1f_2}
\end{figure}
%\bigskip
%\hspace{-.4cm}{\bf Minimality:} Here, we suppose that $f_1$ additionally satisfies the inequality
%\begin{equation}\label{ineq}
%xf_1^{\prime}(x)<1,\,\,  \textrm{for\,\, any}\,\, x\in (0,1).
%\end{equation}
%We claim that for any interval $J=(a,b)\subset I$, there is an $\epsilon>0$ such that
%\begin{equation}\label{covering0}
%(0,\epsilon)\subset \bigcup_{n\in\mathbb{N}} f_1^{-n}(J).
%\end{equation}
%For the proof, suppose that $b<\beta$, if not consider an iteration of $J$ by $f_1^{-1}$. Put $f_1^{-n}(a)=\prod_{i=0}^n \lambda_i$ and %$f_1^{-n}(b)=\prod_{i=1}^n\tilde{\lambda}_i$. By \eqref{ineq}, and the definition of $f_1$,
%we have that $\lambda_n<\tilde{\lambda}_n<1$ and $\lambda_n\to 1$. Now, choose $n$ such that $\tilde{\lambda}_{n+1}>a/b$.
%$$\tilde{\lambda}_{n+1}\big(\prod_{i=0}^n \frac{\tilde{\lambda}_i}{\lambda_i}\big)>\frac{a}{b}.$$
%Then, it is not difficult to see that $f_1^{-(n+1)}(b)>f_1^{-n}(a)$. Hence, by choosing $\epsilon=f_1^{-n}(b)$, covering \eqref{covering0} holds.

%Now, for any $x\in (0,1)$, choose $m$ such that $f_0^m(x)<\epsilon$. By the above, there is $n$ such that $f_0^m(x)\in f_1^{-n}(J)$. That is
%$f_1^n\circ f_0^m(x)\in J$.
\end{example}

Finally, to conclude the paper, we will prove in the following
proposition that there is no finitely generated semigroup action
of diffeomorphisms  in the assumptions of Theorem~\ref{thmB} as we
claimed  in the introduction.
 \begin{proposition}
 There are no non-uniformly expanding  finitely
 generated semigroup actions of diffeomorphisms.
 \end{proposition}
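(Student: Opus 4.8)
The plan is to argue by contradiction. Suppose $f_1,\dots,f_d$ are $C^1$ diffeomorphisms of the compact manifold $M$ and that the action is non-uniformly expanding along some $\omega=\omega_1\omega_2\cdots\in\Omega$. Then every $f^n_\omega$ is a $C^1$ diffeomorphism of $M$, hence a bijection, and the idea is to integrate the defining condition~\eqref{eq0} against the normalized Lebesgue (Riemannian volume) measure $m$ and collide it with a constraint forced by bijectivity.

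First I would record two elementary estimates. \emph{(i) For every $C^1$ diffeomorphism $g$ of $M$ one has $\int_M \log\|Dg(x)^{-1}\|\,dm(x)\ge 0$.} Indeed, writing $k=\dim M$, the smallest singular value of $Dg(x)$ is at most the geometric mean $|\det Dg(x)|^{1/k}$ of all its singular values, so $\|Dg(x)^{-1}\|\ge |\det Dg(x)|^{-1/k}$ and hence $\log\|Dg(x)^{-1}\|\ge -\tfrac1k\log|\det Dg(x)|$; integrating, using the concavity of $\log$ (Jensen's inequality, $m$ being a probability measure) and the change-of-variables identity $\int_M |\det Dg(x)|\,dm(x)=m(g(M))=m(M)=1$, one gets $\int_M\log\|Dg(x)^{-1}\|\,dm(x)\ge -\tfrac1k\log\!\big(\int_M|\det Dg(x)|\,dm(x)\big)=0$. \emph{(ii)} By the chain rule and submultiplicativity of the operator norm, $\log\|Df^n_\omega(x)^{-1}\|\le \sum_{i=0}^{n-1}\log\|Df^{}_{\omega_{i+1}}(f^i_\omega(x))^{-1}\|$ for all $n\in\mathbb{N}$ and all $x\in M$.

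From (ii), dividing by $n$ and taking $\limsup$, the hypothesis~\eqref{eq0} gives $\limsup_{n\to\infty}\tfrac1n\log\|Df^n_\omega(x)^{-1}\|<0$ for $m$-almost every $x\in M$. Since there are only finitely many generators and $M$ is compact, there is $C_0>0$ with $\big|\tfrac1n\log\|Df^n_\omega(x)^{-1}\|\big|\le C_0$ for all $n$ and $x$ (use $|\log\|Df_j(\cdot)^{-1}\||\le C_0$ together with $\|Df^n_\omega(x)^{-1}\|\ge\|Df^n_\omega(x)\|^{-1}\ge e^{-C_0 n}$). Hence the reverse Fatou lemma (the version valid when the integrands are uniformly bounded above by an integrable function, here a constant) applies and yields
\[
  \limsup_{n\to\infty}\ \frac1n\int_M \log\|Df^n_\omega(x)^{-1}\|\,dm(x)\ \le\ \int_M \limsup_{n\to\infty}\frac1n\log\|Df^n_\omega(x)^{-1}\|\,dm(x)\ <\ 0 .
\]
In particular there is some $n\in\mathbb{N}$ with $\int_M \log\|Df^n_\omega(x)^{-1}\|\,dm(x)<0$, which contradicts estimate~(i) applied to the diffeomorphism $g=f^n_\omega$. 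This contradiction proves the proposition.

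The only genuinely delicate point is the passage from the $m$-a.e. statement to the statement about integrals; for this the uniform two-sided bound coming from compactness of $M$ and finiteness of the generating set, combined with reverse Fatou, is precisely what is needed. Estimate (i) is a one-line singular-value computation and (ii) is the usual chain rule, so no further work is required.
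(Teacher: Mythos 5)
Your proof is correct and rests on precisely the same ingredients as the paper's argument: the singular-value inequality $\|T^{-1}\|\ge|\det T|^{-1/\dim M}$, the change-of-variables identity $\int_M|\det Dg|\,dm=1$ valid for any diffeomorphism $g$ of $M$, Jensen's inequality for the concave $\log$, and a Fatou-type passage from the pointwise expansion hypothesis~\eqref{eq0} to an integrated contradiction. The only difference is organizational: you isolate the per-map lemma $\int_M\log\|Dg(x)^{-1}\|\,dm(x)\ge 0$ as a standalone estimate and then apply reverse Fatou to $\tfrac1n\log\|Df^n_\omega(\cdot)^{-1}\|$ to produce a single offending $n$, whereas the paper passes immediately to $\tfrac1n\log|\det Df^n_\omega|$ and applies Jensen and Fatou to the sequence of integrals directly; the mathematical content is identical.
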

 \begin{proof}
 Suppose that $\Gamma$ is a non-uniformly expanding
 finitely generated semigroup of diffeomorphisms. Hence ,
 there exists $\omega\in\Omega$ such that for $m$-almost
 every $x \in M$ it holds
 $$
    \liminf_{n\to \infty} \frac{1}{n} \sum_{i=0}^{n-1} \log
    \|Df_{\sigma^i(\omega)}(f^i_\omega(x))^{-1}\|^{-1}> 0.
 $$
 Since $\|T^{-1}\|^{-1}\leq |\det T|^{1/s}$ for all linear operator $T$ on a $s$-dimensional vector space, one has that
 $$
  \liminf_{n\to \infty} \frac{1}{n} \log |\det Df^n_\omega(x)|  =
  \liminf_{n\to \infty} \frac{1}{n} \sum_{i=0}^{n-1} \log
    |\det Df_{\sigma^i(\omega)}(f^i_\omega(x))|> 0.
 $$
 Since $f_i$ is a diffeomorphisms for all $i=1,\dots,d$, changing variables we have
 that
 $$
   \int  |\det  Df^n_\omega(x)| \, dm(x)=1.
 $$
 Hence, by Fatou-Lebesgue lemma since $|\det Df_i|$ is uniformly bounded for
 all $i=1,\dots,d$ and using the Jensen inequality we get that
 \begin{align*}
      0 &=  \liminf_{n\to\infty} \frac{1}{n} \log \int  |\det
      Df^n_\omega(x)| \, dm(x) \geq \liminf_{n\to\infty} \frac{1}{n} \int  \log |\det
      Df^n_\omega(x)| \, dm(x) \\ &\geq \int \liminf_{n\to\infty} \log |\det
      Df^n_\omega(x)| \, dm(x) > 0.
 \end{align*}
 This provides a contradiction and concludes the proof of the
 proposition.
 \end{proof}

\subsection*{Acknowledgements}
During the preparation of this article the first author was
supported by MTM2014-56953-P and CNPQ-Brazil and the second author by IPM (No. 96370119).
The second author also thanks ICTP for supporting through the association schedule.

\end{document}